	\newcounter{example}[section]
	\newenvironment{example}[1][]{\refstepcounter{example}\par\medskip
		\textbf{Test Problem~\theexample. #1} \rmfamily}{\medskip}
	\newtheorem{definition}{Definition}[section]
	\newtheorem{theorem}{Theorem}[section]
	\newtheorem{lemma}{Lemma}[section]
	\newtheorem{remark}{Remark}
	\journal{}
\begin{document}
	
	\begin{frontmatter}
	
	\title{Entropy stable discontinuous Galerkin schemes for the Relativistic Hydrodynamic Equations}
	
	\author{Biswarup Biswas}
	\author{Harish Kumar}
	\address{Department of Mathematics, Indian Institute of Technology Delhi, New Delhi, India}
	\begin{abstract}
    In this article, we present entropy stable discontinuous Galerkin numerical schemes for equations of special relativistic hydrodynamics with the ideal equation of state. The numerical schemes use the summation by parts (SBP) property of Gauss-Lobatto quadrature rules. To achieve entropy stability for the scheme, we use two-point entropy conservative numerical flux inside the cells and a suitable entropy stable numerical flux at the cell interfaces. The resulting semi-discrete scheme is then shown to entropy stable. Time discretization is performed using SSP Runge-Kutta methods. Several numerical test cases are presented to validate the accuracy and stability of the proposed schemes.
	\end{abstract}
	
	\begin{keyword}
	Discontinuous Galerkin scheme\sep entropy stability\sep special relativistic hydrodynamics\sep hyperbolic conservation laws
	\end{keyword}
	
	\end{frontmatter}
	
	\section{Introduction}
    Relativistic hydrodynamic equations are used to model astrophysical flow problems when the speed of the fluid is comparable to the speed of light, and hence relativistic effects can not be ignored. Some examples are including $\gamma$-ray bursts, astrophysical jets, core-collapse supernovae, and formation of black holes etc. (see     \cite{landau1987,begelman1984theory,mirabel1999sources,zensus1997parsec,bottcher2012relativistic,anile}).

In this article, we consider the special relativistic hydrodynamic (RHD) equations with ideal equations of states. The RHD system of equations is a set of nonlinear hyperbolic PDEs.  Due to nonlinearity in flux, the existence of the classical solutions for most applications is not possible (see \cite{godlewski_book1}). In fact, solutions often contain discontinuities even for smooth initial conditions. Hence, weak solutions are considered. The weak solutions containing discontinuities can be characterized by the solutions satisfying the Rankine-Hugoniot jump condition along the curve of discontinuity. Even though weak solutions allow discontinuities, it is often possible to construct non-unique solutions to the Cauchy problem. An additional criterion in the form of entropy stability is imposed to rule out physically irrelevant solutions. This leads to uniqueness for scalar conservation laws; however, recently, it has been shown that even the entropy stable solutions are non-unique (see \cite{camillo2015}). Still, entropy stability is one of the few nonlinear stability estimates available for systems of hyperbolic conservation laws. Hence, we aim to design numerical schemes such that the computed weak solutions are entropy stable.

Due to the presence of nonlinearity in the flux, the exact solutions for most applications are not possible. Hence, we need to use numerical methods. For the hyperbolic conservation laws, often finite volume methods are used (see \cite{godlewski_numerical_1996}). These methods evolve cell averages of the solution using the numerical flux at the cell edges. The numerical fluxes are based on either the exact Riemann solver or approximate Riemann solvers. To achieve a higher order of accuracy, TVD, ENO, or WENO based reconstructions are used (see \cite{HARTEN1983357,JIANG1996202}). Another popular numerical schemes are discontinuous Galerkin schemes, which are developed in  \cite{chavent1989,cockburn1989,cockburn1988}.

Numerical methods for the RHD equations have seen rapid development in the last few years. One of the first numerical scheme was proposed bt Wilson \cite{wilson1972numerical,wilson1979numerical}. Also, Riemann solvers for RHD equations are presented in several papers (see, \cite{marti1991numerical,marti1994analytical,dai1997iterative,ibanez1999riemann}). Higher-order schemes are developed in \cite{aloy1999genesis,marti1996extension,mignone2005piecewise}, using  piece-wise parabolic reconstruction methods . Furthermore, a scheme using TVD reconstruction is presented in \cite{choi2005numerical}. Higher-order ENO and WENO schemes are developed in (\cite{weno2007Tchekhovskoy,dolezal1995relativistic}). In  \cite{font2008numerical}, authors present a review of a wide range of numerical schemes by studying there performance on a large class of problems. More recently, several robust finite difference and DG schemes have been developed in \cite{wu2015high,zhao2017central,qin2016bound}.

Higher-order entropy stable numerical schemes for hyperbolic conservation laws have been developed in the recent past (see \cite{tadmor2003entropy,ismail2009affordable,chandrashekar2013kinetic,kumar2012entropy,sen2018entropy}). These schemes are based on entropy conservative numerical flux, which is modified to add appropriate higher-order accurate diffusion operator. For the RHD equations \cite{deepak2019entropy} has proposed such a scheme. In the case of DG methods, an entropy stable numerical scheme for shallow water equations was presented in \cite{Gassner2016}. More recently, Chen and Shu \cite{chen2017entropy} has presented a general framework to design entropy stable DG methods, which is also extended to equations of magnetohydrodynamics \cite{Liu2018}.

In this article, we propose an approximation of the RHD equations using entropy stable higher-order nodal DG schemes. We utilize the framework in \cite{chen2017entropy}, which exploits the SBP property of the Gauss-Lobatto quadrature rules. We then use entropy conservative numerical flux for RHD from \cite{deepak2019entropy}, inside each cell, and a suitable entropy stable numerical flux at the cell interfaces. This results in a semi-discrete entropy stable, higher-order accurate, and consistent DG scheme. The rest of this article is organized as follows:

In the next Section \ref{sec:rhd}, we present RHD equations with the ideal equation of states. We also present the entropy framework for the RHD equations. In Section \ref{sec:1dschemes}, we present entropy stable DG schemes for one dimensional case, which are then extended to two dimensions in Section \ref{sec:2dscheme}. Numerical results for one and two-dimensional test cases are presented in Section \ref{sec:num}. Finally, we present concluding remarks in Section \ref{sec:con}.

\section{Relativistic hydrodynamic equations}
\label{sec:rhd}
In the laboratory frame of reference, equations of ideal relativistic hydrodynamics can be written as follows (see \cite{landau1987,anile,Ryu_2006}):
\begin{subequations}\label{rhd_eqs1}
	\begin{align}
	\dfrac{\partial D}{\partial t} + \nabla \cdot (D \mathbf{u})&= 0,\\
	\dfrac{\partial \mathbf{m}}{\partial t} + \nabla \cdot (\mathbf{m} \mathbf{u} + p \mathbf{I})&= 0,\\
	\dfrac{\partial E}{\partial t} + \nabla \cdot\mathbf{m}&= 0.
	\end{align}
\end{subequations}
Here, $\mathbf{w}=(D,\mathbf{m},E)$ is the vector of conservative variables with  $D$ as the fluid mass density, $\mathbf{m}$ is the momentum density, and $E$ is the energy density. The vector of the primitive variable is given by  $(\rho, \mathbf{u},p)$, where $\rho$ is the rest-mass density, $\mathbf{u}$ is the fluid velocity vector, and $p$ the is the kinetic pressure. The conservative and primitive variables are connected via relations,
\begin{equation}
D=\Gamma \rho,\;\mathbf{m}=\rho h \Gamma^2 \mathbf{u},\;E=\rho h \Gamma^2-p,
\end{equation}
where $h$ is the special enthalpy and $\Gamma$ is the Lorentz factor, given by,
\begin{equation}
\label{eq:Gamma}
\Gamma = \frac{1}{\sqrt{1-|\mathbf{u}|^2}}, \qquad \text{with} \qquad |\mathbf{u}|<1
\end{equation}
The system \eqref{rhd_eqs1}, is closed assuming ideal equations of state, given by,
\begin{equation*}
h=1+\dfrac{\gamma}{\gamma-1}\dfrac{p}{\rho}.
\end{equation*}
One of the key difficulties in the case of RHD equations is to find primitive variables from the conservative variable. In this article, we follow the process prescribed in \cite{deepak2019entropy}. In conservative variable $\mathbf{w}$, the RHD equations \eqref{rhd_eqs1} can written as, 
\begin{equation}\label{CLaw}
\dfrac{\partial \mathbf{w}}{\partial t}+\dfrac{\partial \mathbf{f_1}}{\partial x}+\dfrac{\partial \mathbf{f_2}}{\partial y}=0.
\end{equation}
where the fluxes are given by,
\begin{equation*}
\mathbf{f_1}=\begin{bmatrix}
D u_x\\
m_x u_x+p\\
m_y u_x\\
m_x
\end{bmatrix} , \,\text{and}\,
\mathbf{f_2}=\begin{bmatrix}
D u_y\\
m_x u_y\\
m_y u_y+p\\
m_y
\end{bmatrix}.
\end{equation*}
We consider the following solution set of physical admissible states:
\begin{equation}
\label{eq:Omega}
\Omega=\left\{(D,\mathbf{m},E)\;\;| \;  \rho>0,\;\; p>0, \;\;|\mathbf{u}|<1)\right\}.
\end{equation}
Hence, solutions are physically admissible only if density and pressure are positive, and the magnitude of velocity is less than unity. We now have the following result:
\begin{lemma}[see \cite{anile,Ryu_2006}]
The RHD system \eqref{rhd_eqs1} is hyperbolic for states in $\Omega$, with real eigenvalues and a complete set of corresponding eigenvectors.
\end{lemma}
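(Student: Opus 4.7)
The plan is to establish hyperbolicity by exhibiting, for the Jacobian in an arbitrary direction, a full set of real eigenvalues together with four linearly independent right eigenvectors at every state in $\Omega$. By the rotational invariance of the RHD system, it suffices to analyze $A_1 := \partial \mathbf{f}_1/\partial \mathbf{w}$; the analysis for $A_2$ and for any convex combination $n_x A_1 + n_y A_2$ follows by the standard frame-rotation argument for Euler-type systems.

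First, because the fluxes are expressed in terms of $(D,m_x,m_y,p,u_x,u_y)$ while differentiation must be with respect to the conservative state $\mathbf{w}=(D,m_x,m_y,E)$, I would pass through the primitive variables $\mathbf{v} = (\rho,u_x,u_y,p)$ using the chain rule
\begin{equation*}
A_1 \;=\; \frac{\partial \mathbf{f}_1}{\partial \mathbf{v}}\,\left(\frac{\partial \mathbf{w}}{\partial \mathbf{v}}\right)^{-1}.
\end{equation*}
Invertibility of $\partial \mathbf{w}/\partial \mathbf{v}$ on $\Omega$ follows from the conservative-to-primitive inversion recalled earlier (the procedure from \cite{deepak2019entropy}), together with $\rho>0$, $p>0$, and $|\mathbf{u}|<1$; the Jacobian determinant can be read off the relations $D=\Gamma\rho$, $\mathbf{m}=\rho h \Gamma^2 \mathbf{u}$, $E=\rho h \Gamma^2-p$ and is nonzero on $\Omega$.

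Second, I would compute the characteristic polynomial of $A_1$. The standard calculation (see Anile, Ryu 2006) yields the two genuinely nonlinear acoustic speeds
\begin{equation*}
\lambda_\pm \;=\; \frac{u_x(1-c_s^2)\;\pm\; c_s\sqrt{(1-|\mathbf{u}|^2)\bigl(1-u_x^2-c_s^2(|\mathbf{u}|^2-u_x^2)\bigr)}}{1-|\mathbf{u}|^2 c_s^2},
\end{equation*}
together with the linearly degenerate eigenvalue $\lambda_0=u_x$ of algebraic multiplicity two, where $c_s^2 = \gamma p/(\rho h)$ is the relativistic sound speed squared. To prove realness of $\lambda_\pm$ on $\Omega$, I would check that the radicand is nonnegative: this reduces to $1-u_x^2 \geq c_s^2(|\mathbf{u}|^2-u_x^2)$, which under the ideal EoS forces $c_s^2 < 1$, and together with $|\mathbf{u}|<1$ makes the inequality strict.

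Third, I would construct an explicit right eigenvector for each $\lambda_\pm$ and two independent right eigenvectors spanning $\ker(A_1-u_x I)$; the two degenerate modes correspond to the entropy/contact wave and the tangential shear wave. Linear independence of the resulting set can be verified either by a direct determinant computation or by noting the spectral gap $\lambda_- < u_x < \lambda_+$ (strict on $\Omega$) and exhibiting two visibly independent vectors in $\ker(A_1-u_x I)$, which rules out a Jordan block at $\lambda_0$.

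The main obstacle is purely computational: writing $A_1$ in the conservative variables is notoriously messy for RHD because $E$ and $\mathbf{m}$ couple $\rho$, $\mathbf{u}$, $p$ through $h$ and $\Gamma$. Routing the computation through primitive variables keeps the algebra tractable and mirrors the treatment in \cite{anile,Ryu_2006}; the most delicate point is verifying that the $\lambda_0$-eigenspace is genuinely two-dimensional (rather than defective), which is where the specific relativistic structure — in particular the fact that $p$ depends only on $\rho$ and $h$ but not on the tangential velocity — is used.
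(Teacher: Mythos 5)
The paper does not prove this lemma at all: it is stated as a known result and deferred entirely to the cited references (\cite{anile,Ryu_2006}), with only the eigenvalue expressions pointed to in \cite{deepak2019entropy}. Your sketch is the standard argument from exactly those sources --- chain rule through the primitive variables, the two acoustic speeds $\lambda_\pm$ plus the doubly degenerate contact speed $u_x$, positivity of the radicand from $|\mathbf{u}|<1$ and $c_s^2<1$, and explicit eigenvectors --- and it is correct in outline. The one point to tighten is the claim that the ideal EoS ``forces'' $c_s^2<1$: since $c_s^2=\gamma p/(\rho h)$ with $h=1+\tfrac{\gamma}{\gamma-1}\tfrac{p}{\rho}$, the bound $c_s^2<1$ holds unconditionally only for $1<\gamma\le 2$ (which covers the values $\gamma=5/3$ and $\gamma=1.4$ used in the paper, but is an implicit hypothesis, not a consequence of $\Omega$ as defined); for $\gamma>2$ the radicand can become negative and hyperbolicity genuinely fails.
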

The expression for the complete set of eigenvalues can be found in \cite{deepak2019entropy}.	
Let us recall the following definition:
\begin{definition}
	A convex function $\mathcal{U}$ is said to be an entropy function for conservation laws \eqref{CLaw} if there exist smooth functions $\mathcal{F}_j(j=1,2)$ such that \begin{equation}
	\mathcal{F}_j'=\mathcal{U}'(\mathbf{w})\mathbf{f}_{j}'(\mathbf{w}).
	\end{equation}
\end{definition}
For RHD equations \eqref{CLaw}, the entropy functions and corresponding entropy flux functions are given by,
\begin{equation}
\mathcal{U}=-\dfrac{\rho \Gamma s}{\gamma-1},\,\mathcal{F}_1=-\dfrac{\rho \Gamma s u_x}{\gamma-1},\,\mathcal{F}_2=-\dfrac{\rho \Gamma s u_y}{\gamma-1},
\end{equation}
where $s=\ln(p \rho^{-\gamma})$ is the thermodynamic entropy. Let us introduce the entropy variables, $\mathbf{v}(\mathbf{w})={\partial_{\mathbf{w}} \mathcal{U}} $, and the entropy potentials, $\psi^\alpha(\mathbf{w})=\mathbf{v}^\top(\mathbf{w}) \cdot \mathbf{f}^\alpha(\mathbf{w})-\mathcal{F}^\alpha(\mathbf{w}) $ for $\alpha \in \{x,y\}$. A simple calculation results in
\begin{equation*}
\mathbf{v}= \begin{pmatrix}
\frac{\gamma- s}{\gamma -1} +\beta \\
u_x \Gamma \beta \\ 
u_y \Gamma \beta \\
- \Gamma \beta
\end{pmatrix}, \ \text{and }
{\psi}^\alpha=\rho \Gamma u_\alpha  \ \text{ for } \alpha \in \{x,y\}, \beta=\frac{\rho}{p}.
\end{equation*}
We have the following result from \cite{deepak2019entropy}:
\begin{lemma}
	The smooth solutions of \eqref{CLaw} satisfies,
	\begin{equation}\label{en_eq}
	\dfrac{\partial \mathcal{U}}{\partial t}+\dfrac{\partial \mathcal{F}_1}{\partial x}+\dfrac{\partial \mathcal{F}_2}{\partial y}=0.
	\end{equation}
\end{lemma}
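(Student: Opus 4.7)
The plan is to derive the entropy identity by contracting the system \eqref{CLaw} with the entropy variable $\mathbf{v}$ and then using the compatibility conditions that tie the entropy flux derivatives to those of $\mathbf{f}_j$. Concretely, for smooth solutions I would apply $\mathbf{v}^\top$ on the left of \eqref{CLaw} to obtain
\begin{equation*}
\mathbf{v}^\top \dfrac{\partial \mathbf{w}}{\partial t} + \mathbf{v}^\top \dfrac{\partial \mathbf{f}_1}{\partial x} + \mathbf{v}^\top \dfrac{\partial \mathbf{f}_2}{\partial y} = 0.
\end{equation*}
Since $\mathbf{v} = \partial_\mathbf{w}\mathcal{U}$, the chain rule gives $\mathbf{v}^\top \partial_t \mathbf{w} = \partial_t \mathcal{U}$. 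For the flux terms I would write $\partial_x \mathbf{f}_1 = \mathbf{f}_1'(\mathbf{w})\,\partial_x \mathbf{w}$ and use the compatibility relation from the Definition, $\mathcal{F}_j' = \mathbf{v}^\top \mathbf{f}_j'$, to conclude $\mathbf{v}^\top \partial_x \mathbf{f}_1 = \mathcal{F}_1'(\mathbf{w})\,\partial_x \mathbf{w} = \partial_x \mathcal{F}_1$, and analogously for $\mathbf{f}_2$. Summing yields exactly \eqref{en_eq}.

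The only nontrivial verification is that the stated $(\mathcal{U}, \mathcal{F}_1, \mathcal{F}_2)$ actually constitutes an entropy pair, i.e.\ that the compatibility condition is not just assumed but indeed satisfied by the given expressions. I would carry this out by showing $\mathbf{v} = \partial_\mathbf{w}\mathcal{U}$ produces the stated four-component entropy variable, and then confirming the potential identity $\mathbf{v}^\top \mathbf{f}_\alpha - \mathcal{F}_\alpha = \psi^\alpha = \rho\Gamma u_\alpha$. Differentiating $\psi^\alpha$ in $\mathbf{w}$ and using $\partial_\mathbf{w}\psi^\alpha = \mathbf{f}_\alpha^\top \partial_\mathbf{w}\mathbf{v}$ together with symmetry of $\partial_\mathbf{w}\mathbf{v} = \partial^2_\mathbf{w}\mathcal{U}$ would give the required $\mathcal{F}_\alpha' = \mathbf{v}^\top \mathbf{f}_\alpha'$.

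The main obstacle is the bookkeeping in this last step: the RHD variables are related through the nonlinear Lorentz factor $\Gamma = (1-|\mathbf{u}|^2)^{-1/2}$ and the enthalpy $h$, so computing $\partial_\mathbf{w}\mathcal{U}$ and $\partial_\mathbf{w} \psi^\alpha$ directly in conservative variables is tedious. A cleaner route is to work in primitive variables $(\rho, \mathbf{u}, p)$, verify $d\mathcal{U} = \mathbf{v}^\top d\mathbf{w}$ and $d\psi^\alpha = \mathbf{f}_\alpha^\top d\mathbf{v}$ there, and invoke invertibility of the change of variables on $\Omega$ to transfer the identity back. Since both identities are algebraic and the reference \cite{deepak2019entropy} has already carried out the computation, I would simply cite it for the verification and present the contraction argument as the proof of \eqref{en_eq}.
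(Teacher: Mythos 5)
Your argument is correct and is the standard one: the paper itself gives no proof of this lemma, attributing it directly to \cite{deepak2019entropy}, and the contraction of \eqref{CLaw} with $\mathbf{v}=\partial_{\mathbf{w}}\mathcal{U}$ followed by the compatibility relation $\mathcal{F}_j'=\mathbf{v}^\top\mathbf{f}_j'$ is exactly the derivation that reference carries out. You have also correctly isolated the only substantive content --- verifying that the stated $(\mathcal{U},\mathcal{F}_1,\mathcal{F}_2)$ with $\mathbf{v}$ and $\psi^\alpha$ genuinely form an entropy pair for the RHD system --- and deferring that algebraic check (most cleanly done in primitive variables) to the cited source is consistent with how the paper treats it.
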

For non-smooth solutions we have the entropy inequality,
\begin{equation}\label{en_ineq}
\dfrac{\partial \mathcal{U}}{\partial t}+\dfrac{\partial \mathcal{F}_1}{\partial x}+\dfrac{\partial \mathcal{F}_2}{\partial y}\leq0.
\end{equation}

We aim to design DG schemes that satisfy a semi-discrete version of \eqref{en_ineq}.

\section{Entropy stable DG schemes: One dimensional schemes}	
\label{sec:1dschemes}
We will first present the one dimensional case, i.e. we consider,
\begin{equation}\label{CLaw_1d}
\dfrac{\partial \mathbf{w}}{\partial t}+\dfrac{\partial \mathbf{f_1}}{\partial x}=0.
\end{equation}
The spatial domain is partitioned into $N$ elements $I_i=\left[x_{i-\frac{1}{2}},\,x_{i+\frac{1}{2}}\right]\, (1\leq i\leq N)$. Then in DG methods, we aim to find the solution, \begin{equation*}
\mathbf{w}_h\in \mathbf{V}^{k}_h:=\displaystyle \left\{\mathbf{v}_h:\mathbf{v}_h|_{I_i}\in \left[\mathcal{P}^k(I_i)\right]^4,\,1\leq i\leq N \right\},
\end{equation*} such that for all $\mathbf{v}_h\in \mathbf{V}^{k}_h$ and for all $1\leq i\leq N$,
\begin{equation}\label{dgntform}
\begin{split}
\int_{I_i} \dfrac{\partial \mathbf{w}_h^T}{\partial t} \mathbf{v}_h dx-&\int_{I_i} \mathbf{f_1}(\mathbf{w}_h)^T\dfrac{d \mathbf{v}_h}{d x}  dx\\ &+ \hat{\mathbf{f_1}}^T_{i+1/2} \mathbf{v}_h(x_{i+1/2}^-)-\hat{\mathbf{f_1}}^T_{i-1/2} \mathbf{v}_h(x_{i-1/2}^+)=0,
\end{split}
\end{equation} where $\hat{\mathbf{f_1}}_{i+1/2}$ is an numerical flux depends on the numerical solutions  at element interface, that is, $\hat{\mathbf{f_1}}_{i+1/2}=\hat{\mathbf{f_1}}\left(\mathbf{w}_h(x_{i+1/2}^-), \mathbf{w}_h(x_{i+1/2}^+) \right)$. We follow \cite{chen2017entropy} and use the Gauss-Lobatto quadrature rule to approximate the  integrals. First, we present some notations and properties related to the Gauss-Lobatto quadrature.
\subsection{Gauss-Lobatto quadrature and summation-by-parts}
Consider a reference element  $I=[-1,1]$. Then $k$ Gauss-Lobatto quadrature points are taken as,
\begin{equation*}
-1=\xi_0<\xi_1<\xi_2...<\xi_k=1,
\end{equation*}
with corresponding quadrature weights $\omega_j,\, 0\leq j\leq k$.
Now, consider the following continuous and discrete inner products, \begin{align}
\langle u,\,v\rangle:=&\int_{I}uv d\xi,\\
\langle u,\,v\rangle _h:&=\sum_{j=0}^{k}\omega_j u(\xi_j)v(\xi_j).
\end{align} Then, using the Lagrange(nodal) basis, 
\begin{equation}\label{nodal_basis}
L_j(\xi)=\prod_{l=0,l\neq j}^{N}\dfrac{\xi-\xi_l}{\xi_j-\xi_l},
\end{equation} we define the matrices $D$, $M$, and $S$ as follows,
\begin{equation*}
\begin{cases}
D_{jl}=L'_l(\xi_j),\\
M_{jl}=\langle L_j,\,L_l\rangle _{h}=\omega_j\delta_{jl},\\
S{jl}=\langle L_j,\,L'_l\rangle _{h}=\langle L_j,\,L'_l\rangle.
\end{cases}
\end{equation*}
These matrices follow the following property:
\begin{theorem}[SBP property (\cite{carpenter2014entropy})]
	The following discrete analogue of integration by parts, known as summation-by-parts (SBP) holds:
	\begin{equation*}
	\begin{cases}
	S=MD,\\
	MD+D^TM=S+S^T=B,
	\end{cases}
	\end{equation*}
	where the matrix $B$ is known as the boundary matrix given by,\begin{equation*}
	B=diag\{\tau_0, \tau_1,...,\tau_k \},\; \tau_j:=\begin{cases}
	-1 & j=0\\
	0 & 1\leq j\leq k-1\\
	1 & j=k
	\end{cases}.
	\end{equation*}
\end{theorem}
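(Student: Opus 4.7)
The plan is to verify the two displayed identities in order, starting with $S=MD$, then proving $S+S^{\top}=B$ by integration by parts, and finally deducing $MD+D^{\top}M=B$ as a formal consequence. The only nontrivial analytical input needed is that the Gauss–Lobatto rule is exact on a large enough polynomial space to identify the discrete and continuous inner products that appear in the definition of $S$.

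First I would unpack $S_{jl}$ using the discrete inner product and the Lagrange (nodal) interpolation property $L_j(\xi_m)=\delta_{jm}$. Then
\[
S_{jl}=\sum_{m=0}^{k}\omega_m L_j(\xi_m)L'_l(\xi_m)=\omega_j L'_l(\xi_j)=\omega_j D_{jl}=(MD)_{jl},
\]
since $M$ is diagonal with entries $\omega_j$. This gives $S=MD$ directly from the definitions, with no quadrature-exactness issue at all. The potentially subtle point is the second equality in the definition of $S$, namely $\langle L_j,L'_l\rangle_h=\langle L_j,L'_l\rangle$; I would justify it by noting that $L_jL'_l$ is a polynomial of degree $k+(k-1)=2k-1$, and the $(k+1)$-node Gauss–Lobatto quadrature is exact for polynomials of degree $\le 2k-1$. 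This is the step that most needs care, as it is exactly the borderline case for Gauss–Lobatto exactness.

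Next, for $S+S^{\top}=B$ I would use the continuous inner product together with integration by parts on $[-1,1]$:
\[
S_{jl}+S_{lj}=\langle L_j,L'_l\rangle+\langle L'_j,L_l\rangle=\int_{-1}^{1}(L_j L_l)'\,d\xi=L_j(1)L_l(1)-L_j(-1)L_l(-1).
\]
Because $\xi_0=-1$ and $\xi_k=1$, the nodal property gives $L_j(-1)=\delta_{j,0}$ and $L_j(1)=\delta_{j,k}$, so the right-hand side equals $\delta_{j,k}\delta_{l,k}-\delta_{j,0}\delta_{l,0}$, which is precisely $B_{jl}$. Hence $S+S^{\top}=B$.

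Finally, the identity $MD+D^{\top}M=B$ is immediate: from $S=MD$ and the symmetry of the diagonal matrix $M$, we get $S^{\top}=(MD)^{\top}=D^{\top}M^{\top}=D^{\top}M$, so $MD+D^{\top}M=S+S^{\top}=B$. I expect the whole argument to be short; the only real obstacle is the degree count that underlies the quadrature-exactness claim, which I would state explicitly to make the proof self-contained.
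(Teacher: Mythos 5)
Your proof is correct and complete: the nodal property gives $S=MD$ immediately, the degree count $\deg(L_jL_l')=2k-1$ is exactly the exactness threshold of the $(k+1)$-point Gauss--Lobatto rule (which justifies $\langle L_j,L_l'\rangle_h=\langle L_j,L_l'\rangle$), and integration by parts with $L_j(-1)=\delta_{j0}$, $L_j(1)=\delta_{jk}$ yields $S+S^{\top}=B$. The paper itself states this theorem by citation to Carpenter et al.\ without reproducing a proof, and your argument is precisely the standard one from that literature, so there is nothing to add.
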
 
The SBP operators also have the following properties (see \cite{chen2017entropy}):
\begin{equation}\label{prop2}
\sum_{l=0}^{k}D_{jl}=\sum_{l=0}^{k}S_{jl}=0,\,\sum_{l=0}^{k}S_{lj}=\tau_j.
\end{equation}

\subsection*{}


To apply the  Gauss-Lobatto quadrature in \eqref{dgntform} we use the change of variable,
\begin{equation}\label{COV}
x_i(\xi)=\dfrac{1}{2}(x_{i-1/2}+x_{i+1/2})+\dfrac{\xi}{2}\Delta x_i,
\end{equation}
in \eqref{dgntform}to get,
\begin{equation}\label{dgntform_ref}
\begin{split}
\dfrac{\Delta x _i}{2}\int_{I} \dfrac{\partial \mathbf{w}_h^T}{\partial t} \mathbf{v}_h dx-&\int_{I} \mathbf{f_1}(\mathbf{w}_h)^T\dfrac{d \mathbf{v}_h}{d x}  dx\\ &+ \hat{\mathbf{f_1}}^T_{i+1/2} \mathbf{v}_h(x_{i+1/2}^-)-\hat{\mathbf{f_1}}^T_{i-1/2} \mathbf{v}_h(x_{i-1/2}^+)=0.
\end{split}
\end{equation}
To simplify the further discussion, let us present  the further scheme for  the scalar case, which can be easily extended to the system \eqref{CLaw_1d}. We express $w_h$ using nodal basis \eqref{nodal_basis} as, $w_h=\sum_{j=0}^{k}w^i_jL_j(\xi)$. We approximate $f_1(w_h)$ as, $f_1(w_h)\approx {f_1}_h(\xi):=\sum_{j=0}^{k}f_1(w^i_j)L_j(\xi)$.
Then applying the Gauss-Lobatto quadrature in \eqref{dgntform_ref} and choosing the test function $v_h=L_j$, we get,

\begin{equation}
\label{afterInt}
\frac{\Delta x_{i}}{2}\sum_{l=0}^{k}\frac{d w_l^i}{dt}\langle L_l,\, L_j\rangle_{h}- \sum_{l=0}^{k}{f_1}_l^i\langle L_l,\, L'_j\rangle+{\hat{f_1}_{i+1/2}} L_j(1)-{\hat{f_1}_{i-1/2}} L_j(-1)=0.
\end{equation}

This can be written in the simplified form,
\begin{equation}\label{weaknodal}
\frac{\Delta x_{i}}{2}M\frac{d w^i}{dt}- S^T{f_1}^i+B\hat{f_1}^i=0.
\end{equation}
where, the following notations are used,
\begin{align*}
w^i&=[w^i_0,\dots, w^i_k]^T\\
{f_1}^i&=[{f_1}^i_0,\dots, {f_1}^i_k]^T\\
\hat{f_1}^i&=[{f_1}_{i-1/2},0,\dots,0, {f_1}_{i+1/2}]^T.
\end{align*}
Using the SBP property, scheme \eqref{weaknodal} can be further simplified as,
\begin{equation}\label{strong}
\frac{\Delta x_{i}}{2}\frac{d w^i}{dt}- D{f_1}^i=M^{-1}B\left({f_1}^i-\hat{f_1}^i\right)=0.
\end{equation}
On a single element, the scheme \eqref{weaknodal} can be written as,
\begin{equation}\label{DG_single_nonen}
\dfrac{\Delta x}{2}\dfrac{d w_p}{dt}+ \sum_{l=0}^{k} D_{pl}{f_1}_l=\dfrac{\tau_p}{\omega_p}({f_{1}}_p-\hat{f_1}_p),\,\,p=0,1,\dots,k,
\end{equation}
where the indices $i$ is dropped. 
For the system case, one can proceed with the similar analysis to achieve the following scheme,
\begin{equation}\label{DG_single_nonen_sys}
\dfrac{\Delta x}{2}\dfrac{d \mathbf{w}_p}{dt}+ \sum_{l=0}^{k}  D_{pl}\mathbf{f_1}_l=\dfrac{\tau_p}{\omega_p}(\mathbf{f_{1}}_p-\hat{\mathbf{f_1}}_p),\,\,p=0,1,\dots,k.
\end{equation}
In general, the scheme \eqref{strong} is not entropy stable. To achieve entropy stability, we need to modify the scheme. We proceed as follows:
\begin{definition}
	A two point symmetric, consistent numerical flux is said to be entropy conservative flux for an entropy function $\mathcal{U}$ if \begin{equation}
	(\mathbf{v}_R-\mathbf{v}_L)^\text{T}\mathbf{f}^*(\mathbf{w}_R,\,\mathbf{w}_L)=\psi_R-\psi_L,
	\end{equation}
	where $\mathbf{v}=\mathcal{U}'(\mathbf{w})$ is known as entropy variable, and $\psi=\mathbf{v}^\text{T}\cdot\mathbf{f}-\mathcal{F}$ is the entropy potential. $L$ and $R$ in suffix denote the left and right state.
\end{definition}
\begin{definition}
	A two-point symmetric, consistent numerical flux is said to be entropy stable flux for the entropy function $\mathcal{U}$ if \begin{equation}
	(\mathbf{v}_R-\mathbf{v}_L)^\text{T}\mathbf{f}^*(\mathbf{w}_R,\,\mathbf{w}_L)\leq\psi_R-\psi_L.
	\end{equation}
\end{definition}

We modify the scheme \eqref{DG_single_nonen_sys} as,

\begin{equation}\label{DG_single_en}
\dfrac{\Delta x}{2}\dfrac{d \mathbf{w}_p}{dt}+ \sum_{l=0}^{k} 2 D_{pl}\mathbf{f_1}^{*}(\mathbf{w}_p,\mathbf{w}_l)=\dfrac{\tau_p}{\omega_p}(\mathbf{f_{1}}_p-\hat{\mathbf{f_1}}_p),\,\,p=0,1,\dots,k
\end{equation}

where the following notation is used,
\begin{equation*}
[\hat{\mathbf{f_1}}_0,\hat{\mathbf{f_1}}_1\dots,\hat{\mathbf{f_1}}_k]=[\hat{\mathbf{f_1}}_{i-1/2},0,\dots,0, \hat{\mathbf{f_1}}_{i+1/2}].
\end{equation*}
%
Here $\mathbf{f_1}^*$ is a two-point entropy conservative flux.

\begin{theorem}[\cite{chen2017entropy}]
	\label{thm:ent_con}
	If $\mathbf{f_1}^*(\mathbf{w}_L,\mathbf{w}_R)$ is consistent and symmetric, then \eqref{DG_single_en} is conservative and high order accurate. If we further assume that $\mathbf{f_1}^*(\mathbf{w}_L,\mathbf{w}_R)$ is entropy conservative, then \eqref{DG_single_en} is also locally entropy conservative within a single element.
\end{theorem}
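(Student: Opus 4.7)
The plan is to check the three claims in order, and the heart of the argument is the third one, where the SBP structure and the entropy conservation identity must combine in a telescoping way.

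For \emph{conservation}, I would sum \eqref{DG_single_en} over $p$ with the quadrature weights $\omega_p$. The time derivative term gives the rate of change of the cell average. For the volume flux sum, I write $\omega_p D_{pl}=M_{pp}D_{pl}=S_{pl}$ and split $2S_{pl}=(S_{pl}+S_{lp})+(S_{pl}-S_{lp})$. The symmetric part contracts with the symmetric $\mathbf{f_1}^*(\mathbf{w}_p,\mathbf{w}_l)$ through $S+S^T=B$, leaving only diagonal boundary contributions $\tau_p\,\mathbf{f_1}^*(\mathbf{w}_p,\mathbf{w}_p)=\tau_p\mathbf{f_1}(\mathbf{w}_p)$ by consistency; the skew part integrates to zero again by symmetry of $\mathbf{f_1}^*$ (swap the summation indices and use $S_{pl}-S_{lp}=-(S_{lp}-S_{pl})$). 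These boundary contributions combine with the right-hand side of \eqref{DG_single_en} to leave exactly $\hat{\mathbf{f_1}}_{i+1/2}-\hat{\mathbf{f_1}}_{i-1/2}$, which is the conservative telescoping statement.

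For \emph{high-order accuracy}, I would show that the modified volume term still consistently discretises $\partial_x\mathbf{f_1}$. Using consistency $\mathbf{f_1}^*(\mathbf{w},\mathbf{w})=\mathbf{f_1}(\mathbf{w})$ and symmetry, a Taylor expansion around $\mathbf{w}_p$ gives $2\mathbf{f_1}^*(\mathbf{w}_p,\mathbf{w}_l)=\mathbf{f_1}(\mathbf{w}_p)+\mathbf{f_1}(\mathbf{w}_l)+O(|\mathbf{w}_l-\mathbf{w}_p|^2)$. Plugging into $\sum_l 2D_{pl}\mathbf{f_1}^*(\mathbf{w}_p,\mathbf{w}_l)$ and invoking $\sum_l D_{pl}=0$ from \eqref{prop2} kills the $\mathbf{f_1}(\mathbf{w}_p)$ part, and for smooth data the quadratic remainder is of the same order as the truncation error of the standard DG volume term $\sum_l D_{pl}\mathbf{f_1}(\mathbf{w}_l)$. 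Hence the modified scheme has the same formal accuracy as \eqref{DG_single_nonen_sys}.

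For \emph{local entropy conservation}, which I expect to be the main obstacle, I would contract \eqref{DG_single_en} with $\omega_p\mathbf{v}_p^T$ and sum over $p$. The time derivative part yields $\sum_p \omega_p\,\mathbf{v}_p^T\, d\mathbf{w}_p/dt=\sum_p \omega_p\, d\mathcal{U}_p/dt$ by the definition of the entropy variables. The volume flux term becomes $\sum_{p,l}2S_{pl}\mathbf{v}_p^T\mathbf{f_1}^*(\mathbf{w}_p,\mathbf{w}_l)$, and the key manipulation is to write
\begin{equation*}
2\mathbf{v}_p^T\mathbf{f_1}^*(\mathbf{w}_p,\mathbf{w}_l)=(\mathbf{v}_p+\mathbf{v}_l)^T\mathbf{f_1}^*(\mathbf{w}_p,\mathbf{w}_l)-(\mathbf{v}_l-\mathbf{v}_p)^T\mathbf{f_1}^*(\mathbf{w}_p,\mathbf{w}_l).
\end{equation*}
The first piece, being symmetric in $(p,l)$, contracts with $S_{pl}$ through $S+S^T=B$ and collapses to the diagonal contribution $\sum_p \tau_p\,\mathbf{v}_p^T\mathbf{f_1}(\mathbf{w}_p)$ via consistency. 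The second piece, by the entropy conservation identity, equals $\psi_l-\psi_p$, and using the SBP identities $\sum_l S_{pl}=0$ and $\sum_p S_{pl}=\tau_l$ from \eqref{prop2} it collapses to $\sum_p \tau_p\psi_p$. Combining gives $\sum_{p,l}2S_{pl}\mathbf{v}_p^T\mathbf{f_1}^*(\mathbf{w}_p,\mathbf{w}_l)=\sum_p \tau_p(\mathbf{v}_p^T\mathbf{f_1}_p-\psi_p)=\sum_p \tau_p\mathcal{F}_p$. Substituting back, the semi-discrete entropy balance on the element reads
\begin{equation*}
\frac{\Delta x}{2}\sum_p \omega_p\frac{d\mathcal{U}_p}{dt}=-(\mathcal{F}_k-\mathcal{F}_0)+\mathbf{v}_k^T(\mathbf{f_1}_k-\hat{\mathbf{f_1}}_k)-\mathbf{v}_0^T(\mathbf{f_1}_0-\hat{\mathbf{f_1}}_0),
\end{equation*}
which is purely a boundary expression depending only on the endpoint states and the interface numerical flux; this is the precise statement of local entropy conservation within the element.
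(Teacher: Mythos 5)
Your entropy-conservation argument is correct and is essentially the paper's own proof: the paper writes $2S_{jl}=B_{jl}+S_{jl}-S_{lj}$ and then uses symmetry of $\mathbf{f_1}^*$ to produce the difference $(\mathbf{v}_j-\mathbf{v}_l)^T\mathbf{f_1}^*$, whereas you split the summand $2\mathbf{v}_p^T\mathbf{f_1}^*$ into its symmetric and antisymmetric parts in $(p,l)$; these are the same manipulation, and your final boundary expression $-(\mathcal{F}_k-\mathcal{F}_0)+\mathbf{v}_k^T(\mathbf{f_1}_k-\hat{\mathbf{f_1}}_k)-\mathbf{v}_0^T(\mathbf{f_1}_0-\hat{\mathbf{f_1}}_0)$ agrees with the paper's $(\psi_k-\mathbf{v}_k^T\hat{\mathbf{f_1}}_{i+1/2})-(\psi_0-\mathbf{v}_0^T\hat{\mathbf{f_1}}_{i-1/2})$ via $\psi=\mathbf{v}^T\mathbf{f_1}-\mathcal{F}$. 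Your conservation argument (contracting with $\omega_p$, using $\omega_pD_{pl}=S_{pl}$ and the symmetric/skew split) is also correct; the paper simply defers both conservation and accuracy to \cite{chen2017entropy}, so this part is a welcome addition rather than a deviation.

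The one genuine gap is in the accuracy step. The expansion $2\mathbf{f_1}^*(\mathbf{w}_p,\mathbf{w}_l)=\mathbf{f_1}(\mathbf{w}_p)+\mathbf{f_1}(\mathbf{w}_l)+O(|\mathbf{w}_l-\mathbf{w}_p|^2)$ is true for a smooth symmetric consistent flux, but the conclusion you draw from it does not follow. After multiplying by the $2/\Delta x$ scaling of the derivative operator, the remainder contributes $\frac{2}{\Delta x}\sum_l D_{pl}\,O(\Delta x^2)=O(\Delta x)$ if one only bounds term by term, i.e.\ the naive estimate shows first-order consistency, not the design order $k+1$, and in particular is not ``of the same order as the truncation error of the standard DG volume term.'' The actual high-order accuracy proof (Fisher--Carpenter, used in Chen--Shu) exploits cancellation: one views $\mathbf{f_1}^*(\mathbf{w}(x),\mathbf{w}(y))$ as a smooth symmetric bivariate function $F(x,y)$ with $2\,\partial_yF(x,y)\big|_{y=x}=\partial_x\mathbf{f_1}(\mathbf{w}(x))$, and then invokes the accuracy of the SBP differentiation matrix applied in the second argument along the diagonal, $2\sum_lD_{pl}F(\xi_p,\xi_l)\approx 2\,\partial_yF(\xi_p,y)\big|_{y=\xi_p}$. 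Without that lemma (or an equivalent summation-by-parts cancellation argument) the high-order claim is not established; since the theorem attributes this part to \cite{chen2017entropy}, citing that result explicitly would also suffice.
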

\begin{proof}
Proof of accuracy and conservation follows from \cite{chen2017entropy}. Following  \cite{chen2017entropy}, we present the  proof of entropy conservation:
	\par {\setlength{\parindent}{0cm}\textbf{Entropy conservation:}}
	Entropy production within a single element is given by, 
	\begin{flalign}\label{en_exp}
	\dfrac{d}{dt}\left(\sum_{j=0}^{k}\dfrac{\Delta x}{2}\omega_j\mathcal{U}_j\right)&=\sum_{j=0}^{k}\dfrac{\Delta x}{2}\omega_j\mathbf{v}_j^T\dfrac{d\mathbf{w}_j}{dt}\nonumber\\
	&=\sum_{j=0}^{k}\tau_j\mathbf{v}_j^T(\mathbf{f_1}_j-\hat{\mathbf{f_1}}_j)-2{\sum_{j=0}^{k}\sum_{l=0}^{k}S_{jl}\mathbf{v}_j^T\mathbf{f_{1}}^*(\mathbf{w}_j,\mathbf{w}_l)}\\
	\end{flalign}
	Simplify the second term as,
	\begin{flalign}
	2\sum_{j=0}^{k}\sum_{l=0}^{k}S_{jl}\mathbf{v}_j^T\mathbf{f_{1}}^*(\mathbf{w}_j,\mathbf{w}_l)&=\sum_{j=0}^{k}\sum_{l=0}^{k}(B_{jl}+S_{jl}-S_{lj})\mathbf{v}_j^Tf^*(\mathbf{w}_j,\mathbf{w}_l), \;( S+S^T=B.)\nonumber\\
	&=\sum_{j=0}^{k}\tau_j\mathbf{v}_j^T\mathbf{f_1}_j+\sum_{j=0}^{k}\sum_{l=0}^{k}S_{jl}(\mathbf{v}_j-\mathbf{v}_l)^T\mathbf{f_{1}}^*(\mathbf{w}_j,\mathbf{w}_l)\nonumber\\
	&=\sum_{j=0}^{k}\tau_j\mathbf{v}_j^T\mathbf{f_1}_j+\sum_{j=0}^{k}\sum_{l=0}^{k}S_{jl}(\psi_j-\psi_l)\nonumber\\
	&=\sum_{j=0}^{k}\tau_j(\mathbf{v}_j^T\mathbf{f_1}_j-\psi_j). \;\text{(Using \eqref{prop2})}
	\end{flalign}
	Substituting in \eqref{en_exp}, we get,
	\begin{flalign*}
	\dfrac{d}{dt}\left(\sum_{j=0}^{k}\dfrac{\Delta x}{2}\omega_j\mathcal{U}_j\right)&=\sum_{j=0}^{k}\tau_j(\psi_j-\mathbf{v}_j^T\hat{\mathbf{f_1}}_j)\\
	&=(\psi_k-\mathbf{v}_k^T\hat{\mathbf{f_1}}_{i+1/2})-(\psi_0-\mathbf{v}_0^T\hat{\mathbf{f_1}}_{i-1/2})
	\end{flalign*}
	This shows that the scheme is entropy conservative within a single element.
\end{proof}

\par	For RHD equations we use the entropy conservative flux proposed in \cite{deepak2019entropy} which is given by,
\begin{align}
\mathbf{f_1}^{*(1)}&=\hat{\rho} \bar{\mu_x}\nonumber\\
\mathbf{f_1}^{*(2)}&=\dfrac{\bar{\mu_x}}{\bar{\Gamma}}\mathbf{f_1}^{*(4)}+\dfrac{\bar{\rho}}{\bar{\beta}}\nonumber\\
\mathbf{f_1}^{*(3)}&=\dfrac{\bar{\mu_y}}{\bar{\Gamma}}\mathbf{f_1}^{*(4)}\\
\mathbf{f_1}^{*(4)}&=-\dfrac{\bar{\Gamma}\left(k_1\hat{\rho}\bar{\mu_x}+\dfrac{\bar{\mu_x}\bar{\rho}}{\bar{\beta}}\right)}{\bar{\mu_x}^2+\bar{\mu_y}^2-\bar{\Gamma}^2}\nonumber
\end{align}
where $k_1=1+\dfrac{1}{\gamma-1}\dfrac{1}{\hat{\beta}}$, $\mu_x=\Gamma u_x$,  $\mu_y=\Gamma u_y$ and $\hat{a}:=\dfrac{a_R-a_L}{\ln a_R-\ln a_L}$.
Now the following statement follows immediately from the entropy conservation proof of the Theorem \ref{thm:ent_con}:
\begin{theorem}
	If the numerical flux $\hat{\mathbf{f_1}}$ at the element interface is entropy stable, then the scheme \eqref{DG_single_en} is entropy stable.
\end{theorem}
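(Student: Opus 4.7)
The plan is to reduce this statement to a telescoping computation built on Theorem \ref{thm:ent_con}. Specifically, the previous theorem already delivers a clean exact expression for the rate of change of the total entropy on a single element $I_i$ in terms of the interface numerical fluxes:
\begin{equation*}
\frac{d}{dt}\left(\sum_{j=0}^{k}\frac{\Delta x_i}{2}\omega_j\mathcal{U}_j^i\right)
=\bigl(\psi_k^i-(\mathbf{v}_k^i)^T\hat{\mathbf{f_1}}_{i+1/2}\bigr)-\bigl(\psi_0^i-(\mathbf{v}_0^i)^T\hat{\mathbf{f_1}}_{i-1/2}\bigr).
\end{equation*}
So the first step is simply to invoke Theorem \ref{thm:ent_con} on every element $I_i$, $1\le i\le N$, and add the resulting identities.

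Next I would regroup the right-hand side by interface rather than by element. At the interior interface $x_{i+1/2}$, element $I_i$ contributes $\psi_k^i-(\mathbf{v}_k^i)^T\hat{\mathbf{f_1}}_{i+1/2}$ while element $I_{i+1}$ contributes $-\psi_0^{i+1}+(\mathbf{v}_0^{i+1})^T\hat{\mathbf{f_1}}_{i+1/2}$. Writing $\mathbf{v}_L:=\mathbf{v}_k^i$, $\mathbf{v}_R:=\mathbf{v}_0^{i+1}$, $\psi_L:=\psi_k^i$, $\psi_R:=\psi_0^{i+1}$ (so $L$ and $R$ are the traces from the left and right of the interface, which is the same orientation used in the definition of the entropy stable flux), the combined contribution at the interface is
\begin{equation*}
(\psi_L-\psi_R)-(\mathbf{v}_L-\mathbf{v}_R)^T\hat{\mathbf{f_1}}_{i+1/2}
=-\bigl[(\psi_R-\psi_L)-(\mathbf{v}_R-\mathbf{v}_L)^T\hat{\mathbf{f_1}}_{i+1/2}\bigr].
\end{equation*}

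The key step is now to apply the entropy stability inequality $(\mathbf{v}_R-\mathbf{v}_L)^T\hat{\mathbf{f_1}}_{i+1/2}\le \psi_R-\psi_L$ satisfied by the interface flux by hypothesis. This makes the bracketed quantity non-negative, so each interior-interface contribution is non-positive. Summing over all interior interfaces and assuming standard (periodic or compactly supported) boundary conditions so that the two end contributions either cancel or vanish, we obtain
\begin{equation*}
\frac{d}{dt}\sum_{i=1}^{N}\sum_{j=0}^{k}\frac{\Delta x_i}{2}\omega_j\mathcal{U}_j^i \le 0,
\end{equation*}
which is the semi-discrete analogue of \eqref{en_ineq} and establishes entropy stability of the fully assembled scheme \eqref{DG_single_en}.

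There is no deep obstacle here, since Theorem \ref{thm:ent_con} already did the heavy lifting inside each element; the only thing to be careful about is the sign/orientation bookkeeping, namely matching the left/right traces at $x_{i+1/2}$ with the $L$/$R$ convention in the definition of entropy stable flux, and making sure that the boundary terms at $x_{1/2}$ and $x_{N+1/2}$ are handled by an entropy-consistent choice of boundary condition so that they do not introduce a positive contribution.
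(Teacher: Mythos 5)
Your proposal is correct and follows essentially the same route as the paper: both use the single-element entropy identity from Theorem \ref{thm:ent_con}, regroup the boundary terms by interface, identify the interface entropy production as $(\mathbf{v}_0^{i+1}-\mathbf{v}_k^i)^T\hat{\mathbf{f_1}}-(\psi_0^{i+1}-\psi_k^i)$, and conclude non-positivity from the entropy stable flux condition. Your version is slightly more complete in that it explicitly sums over all elements and notes the need for an entropy-consistent treatment of the domain boundary, which the paper leaves implicit.
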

\begin{proof}
The entropy production rate at the interface is
\begin{flalign*}
(\psi_k^i-(\mathbf{v}_k^i)^T\hat{\mathbf{f_1}}(\mathbf{w}^{i}_k,\mathbf{w}^{i+1}_0))-(\psi_0^{i+1}-(\mathbf{v}_0^{i+1})^T\hat{\mathbf{f_1}}(\mathbf{w}^{i}_k,\mathbf{w}^{i+1}_0))\\
=(\mathbf{v}_0^{i+1}-\mathbf{v}_k^i)^T\hat{\mathbf{f_1}}(\mathbf{w}^{i}_k,\mathbf{w}^{i+1}_0)-(\psi_0^{i+1}-\psi_k^i).
\end{flalign*}
The expression clearly shows that the scheme \eqref{DG_single_en} is entropy stable if  the numerical flux $\hat{\mathbf{f_1}}$ at the element interface is entropy stable.
\end{proof}

\begin{remark}
	TVD/TVB limiters and bounds preserving limiters can be used for enhancing the performance of the scheme. We have utilized the bounds preserving limiter given in \cite{qin2016bound} to keep the solution in physical space. We recall from \cite{chen2017entropy} that the use of bounds preserving limiter will not increase the entropy.
\end{remark}
\section{Entropy stable DG schemes: Two dimensional schemes}
\label{sec:2dscheme}
We now present two dimensional schemes. We use the rectangular mesh $I_{i,j}=\left[x_{i-\frac{1}{2}},\,x_{i+\frac{1}{2}}\right]\times\left[y_{j-\frac{1}{2}},\,y_{j+\frac{1}{2}}\right]\, (1\leq i\leq N_x), (1\leq j\leq N_y)\,$ with mesh size $\Delta x_i$ and $\Delta y_j$ in $x$ and $y$ direction respectively. For simplicity we consider the same number of Gauss-Lobatto points ($k+1$) in both the direction. On applying the change of variables
 \begin{align*}
x_i(\xi)&=\dfrac{1}{2}(x_{i-1/2}+x_{i+1/2})+\dfrac{\xi}{2}\Delta x_i,\\
y_j(\xi)&=\dfrac{1}{2}(y_{j-1/2}+y_{j+1/2})+\dfrac{\xi}{2}\Delta y_j,
\end{align*}
the nodal values are denoted as $\mathbf{w}_{p,q}=\mathbf{w}_h(x_i(\xi_p),y_j(\xi_q))$.
Then for a single element $I_{i,j}$, the scheme is given by,
\begin{eqnarray}\label{2dscheme}
\dfrac{d \mathbf{w}_{p,q}}{dt}&=&-\dfrac{2}{\Delta x} \left(\sum_{l=0}^{k} 2 D_{pl}\mathbf{f_1}^{*}(\mathbf{w}_p,\mathbf{w}_l)-\dfrac{\tau_p}{\omega_p}(\mathbf{f_{1}}_{p,q}-\hat{\mathbf{f_1}}_{p,q})\right)\\
&&\qquad-\dfrac{2}{\Delta y}\left(\sum_{l=0}^{k} 2 D_{ql}\mathbf{f_2}^{*}(\mathbf{w}_q,\mathbf{w}_l)-\dfrac{\tau_q}{\omega_q}(\mathbf{f_{2}}_{p,q}-\hat{\mathbf{f_2}}_{p,q})\right),\,\,\,(p,q=0,1,\dots,k)\nonumber
\end{eqnarray}
where we have used the following notations by dropping the indices $i$ and $j$,
\begin{align*}
\mathbf{f_{d}}_{p,q}&=\mathbf{f_d}(\mathbf{w}_{p,q}),\,\,d=1,2\\
[\hat{\mathbf{f_1}}_{0,q},\hat{\mathbf{f_1}}_{1,q}\dots,\hat{\mathbf{f_1}}_{k,q}]&=[\hat{\mathbf{f_1}}_{i-1/2,q},0,\dots,0, \hat{\mathbf{f_1}}_{i+1/2,q}],\\
[\hat{\mathbf{f_2}}_{p,0},\hat{\mathbf{f_2}}_{p,1}\dots,\hat{\mathbf{f_2}}_{p,k}]&=[\hat{\mathbf{f_2}}_{p,j-1/2},0,\dots,0, \hat{\mathbf{f_2}}_{p,j+1/2}],\\
\hat{\mathbf{f_1}}_{i+1/2,q}&=\hat{\mathbf{f_1}}\left(\mathbf{w}_h(x_{i+1/2}^-,y_j(\xi_q)), \mathbf{w}_h(x_{i+1/2}^+,y_j(\xi_q)) \right),\\
\hat{\mathbf{f_2}}_{p,j+1/2}&=\hat{\mathbf{f_2}}\left(\mathbf{w}_h(x_i(\xi_p),y_{j+1/2}^-), \mathbf{w}_h(x_i(\xi_p),y_{j+1/2}^+) \right).
\end{align*}

\section{Numerical results}
\label{sec:num}
We will now present the numerical results for $k=1$, which is a second-order scheme (denoted by ESDG-O2) and $k=2$, which results in a third-order numerical scheme (denoted by ESDG-O3). We use Lax-Friedrich flux at the cell interface. For the time integration, we use strong stability preserving (SSP) Runge-Kutta method \cite{gottlieb2001strong}.  In all test cases, we use CFL number 0.1 for consistency. We also set gas constant $\gamma=\dfrac{5}{3}$, unless stated in the particular test case. Furthermore, the TVDM limiter is used with $M=10$ at every stage of Runge-Kutta time update to prevent the oscillations in solution.

\subsection{One dimensional numerical tests}
We first proceed with one dimensional test cases. First we test accuracy of the schemes. We then test the proposed schemes on various Riemann problems.
\begin{example}[Accuracy test:]
	\label{test1}
	To check the accuracy of the proposed schemes, we first consider a test case with smooth exact solution. We consider $[0,1]$ as computational domain with periodic boundary conditions. The initial condition are given as follows:
	\begin{equation*}
	\left(\rho,\,u,\,p\right)=\left(2 + \sin(2 \pi x),0.5,1\right).
	\end{equation*}
	The exact smooth solution of the problem is advection of rest-mass density, i.e. $\rho=2 + \sin(2 \pi (x-0.5t))$. Other variables, remain unchanged. We compute the solution till final time $t=2$. 
	
	The numerical errors ($L^1$ and $L^\infty$) for the $\rho$ variable are presented in  Table \ref{tab:accrk1k2}. We note that both schemes have reached the desired order of accuracy, even at the coarse mesh of 256 cells. Furthermore, the schemes are highly more accurate when compared with finite-difference entropy stable schemes in \cite{deepak2019entropy}.
	\begin{table}[htb!]
		\centering
		\begin{tabular}{@{\extracolsep{4pt}}crrrrrrrr@{}}
			\toprule
			Number of &\multicolumn{4}{c}{ESDG-O2}&\multicolumn{4}{c}{ESDG-O3} \\
			\cline{2-5} \cline{6-9}
			cells & $L^1$ error  &  Order &  $L^\infty$ error & Order & $L^1$ error  &  Order &  $L^\infty$ error & Order\\
			\midrule
			32 & 4.97e-02 &  ...  & 4.59e-02 & ... &4.03e-04 &  ...  & 4.31e-04 & ... \\
			64 & 1.24e-02 &  2.00 & 1.09e-02 & 2.08 & 5.24e-05 &  2.94 & 5.52e-05 & 2.96 \\ 
			128 & 3.04e-03 &  2.03 & 2.61e-03 & 2.06 &6.66e-06 &  2.98 & 6.96e-06 & 2.99 \\  
			256 & 7.30e-04 &  2.06 & 6.31e-04 & 2.05 & 8.38e-07 &  2.99 & 8.71e-07 & 3.00 \\  
			512 & 1.71e-04 &  2.10 & 1.51e-04 & 2.07 &1.05e-07 &  3.00 & 1.09e-07 & 3.00 \\ 
			1024 & 3.81e-05 &  2.16 & 3.50e-05 & 2.11 &1.32e-08 &  3.00 & 1.36e-08 & 3.00 \\  
			\bottomrule
		\end{tabular} 
		\caption{Test problem 1 (Accuracy test): $L^1$ and $L^\infty$ errors for $\rho$ at various resolutions using ESDG-O2 and ESDG-O3 schemes}
		\label{tab:accrk1k2}
	\end{table}
\end{example}

\begin{example}[Isentropic Smooth Flows:]
	\label{test2}
	We consider another  smooth test case from \cite{zhang2006ram,marti2003numerical}. The initial profile consist of a pulse of width $L=0.3$ and amplitude $\alpha=1$ inside the domain $[-0.35,1]$ over the reference state $(\rho_{ref}, u_{ref}, p_{ref})=(1,0,100)$. The initial density profile is given by     
	\begin{equation*}
	\rho=\rho_{ref} \left(1+\alpha f(x) \right),
	\end{equation*}
	where 
	\begin{equation*}
	f(x)=\begin{cases}
	\left(\left(\dfrac{x}{L}\right)^2-1\right)^4 & \text{if } |x|<L\\
	0 & \text{otherwise}.
	\end{cases}
	\end{equation*}
	Initial pressure is taken as $p=K\rho^\gamma$, where $K$ is constant. The velocity inside the pulse is set such that the Riemann invariant, 
	\begin{equation*}
	J_{-}=\dfrac{1}{2}\ln\left(\dfrac{1+u}{1-u}\right)-\dfrac{1}{\sqrt{\gamma-1}}\ln\left(\dfrac{\sqrt{\gamma-1}+c}{\sqrt{\gamma-1}-c}\right)
	\end{equation*}
	remains constant throughout the region, where $c$ is the sound speed. Exact solution is calculated using the standard characteristic analysis.     
	\begin{figure}
		\centering
		\begin{subfigure}[b]{0.45\textwidth}
			\includegraphics[width=\textwidth]{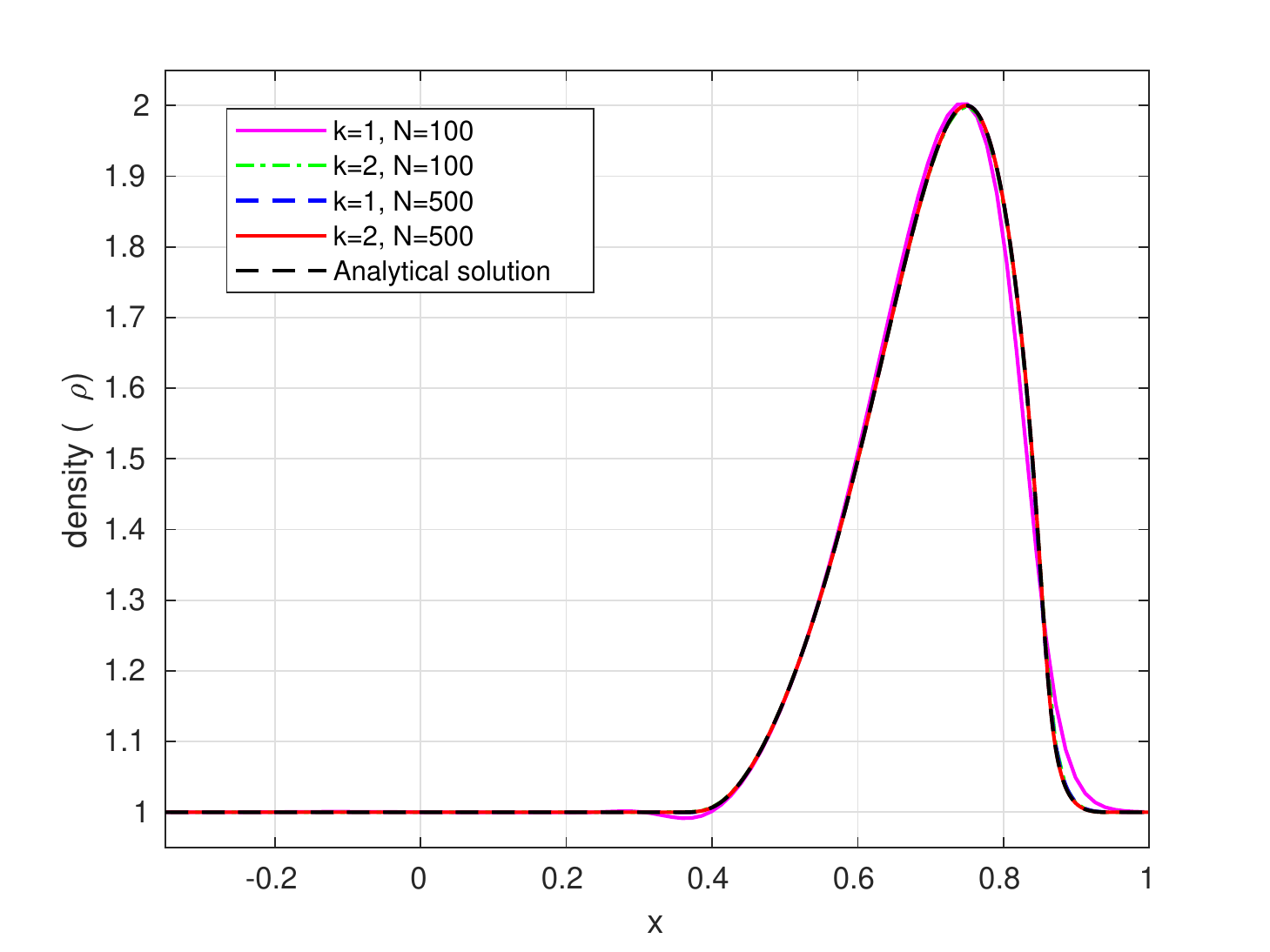}
			\caption{Density}
		\end{subfigure}
		\begin{subfigure}[b]{0.45\textwidth}
			\includegraphics[width=\textwidth]{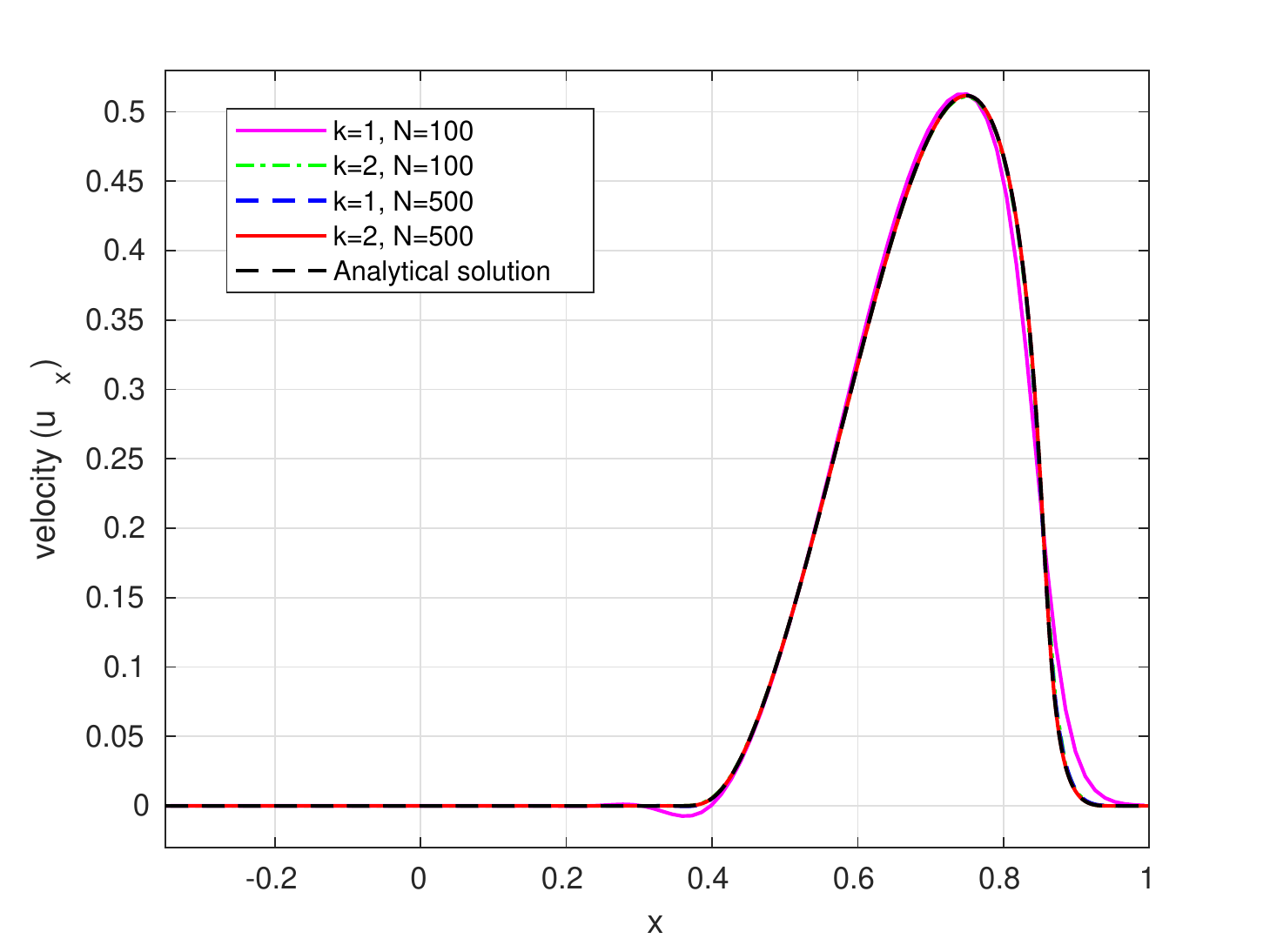}
			\caption{Velocity}
		\end{subfigure}
		\begin{subfigure}[b]{0.45\textwidth}
			\includegraphics[width=\textwidth]{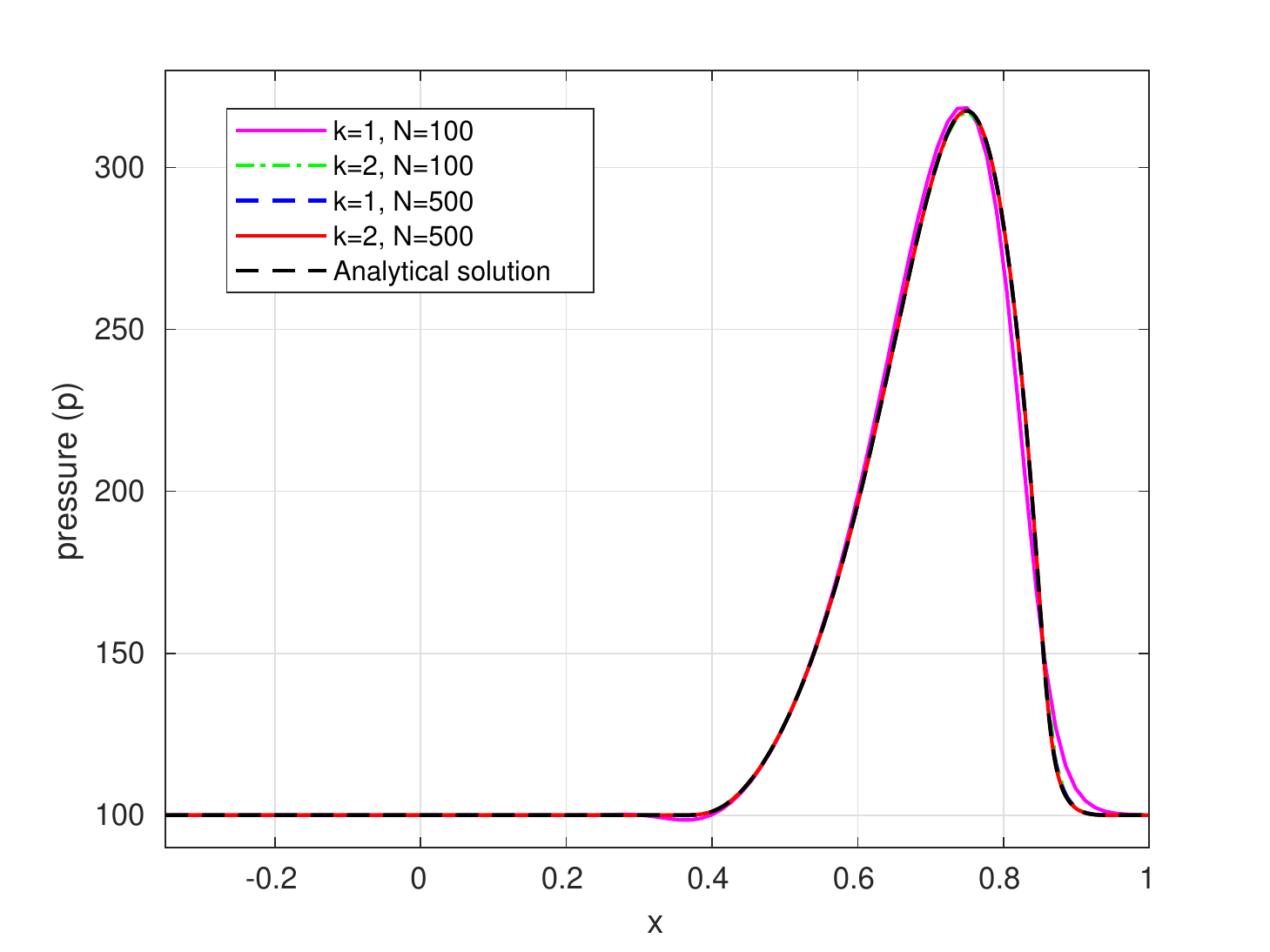}
			\caption{Pressure}
		\end{subfigure}
		\begin{subfigure}[b]{0.45\textwidth}
			\includegraphics[width=\textwidth]{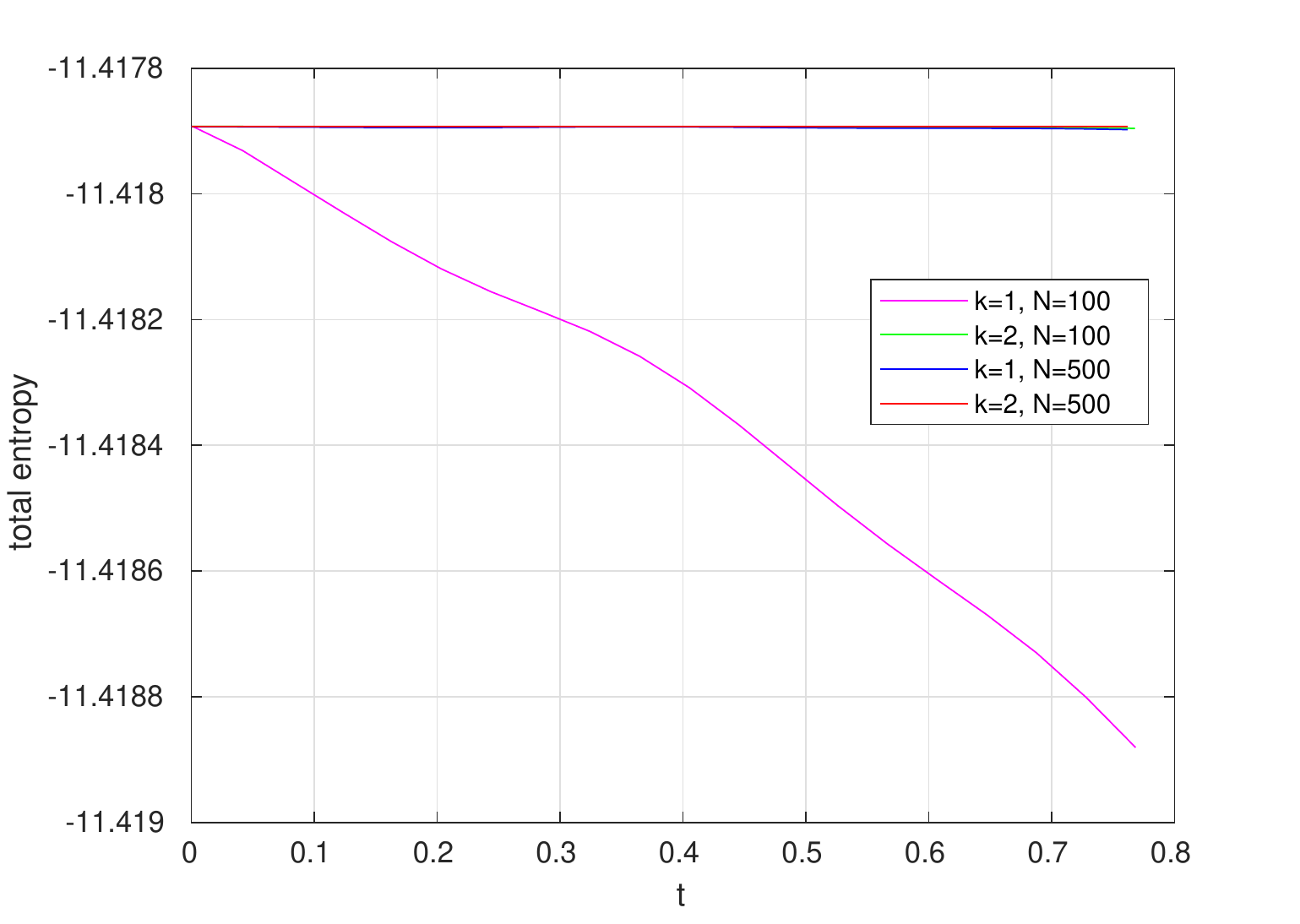}
			\caption{Evolution of total entropy}
		\end{subfigure}
		
		\caption{Test Problem \ref{test2} (Isentropic Smooth Flows): Plots of density, velocity, pressure and total entropy evolution for ESDG-O2(k=1) and ESDG-O3(k=2) using 100 and 500 cells.}
		\label{fig:pb19}
	\end{figure}
	\begin{table}[htb!]
		\centering
		\begin{tabular}{ccccccccc}
			\toprule
			Number of &\multicolumn{4}{c}{ESDG-O2}&\multicolumn{4}{c}{ESDG-O3} \\
			\cmidrule(r){2-5} \cmidrule(r){6-9}
			cells & $L^1$ error  &  Order &  $L^\infty$ error & Order & $L^1$ error  &  Order &  $L^\infty$ error & Order\\
			\midrule
			32 & 7.15e-02 &  ...  & 1.18e-01 & ... & 4.95e-03 &  ...  & 2.52e-02 & ... \\  
			64 & 2.25e-02 &  1.67 & 7.04e-02 & 0.75 & 8.52e-04 &  2.54 & 8.47e-03 & 1.57  \\  
			128 & 6.83e-03 &  1.72 & 3.58e-02 & 0.98 & 1.24e-04 &  2.78 & 1.46e-03 & 2.53 \\ 
			256 & 1.79e-03 &  1.93 & 1.66e-02 & 1.11 & 1.32e-05 &  3.24 & 2.26e-04 & 2.70 \\ 
			512 & 4.17e-04 &  2.10 & 5.98e-03 & 1.48 & 1.48e-06 &  3.16 & 2.39e-05 & 3.24 \\ 
			1024 & 9.16e-05 &  2.19 & 1.72e-03 & 1.80 & 1.92e-07 &  2.94 & 3.66e-06 & 2.70 \\  
			\bottomrule
		\end{tabular} 
		\caption{Test problem 2 (isentropic smooth flows): $L^1$ and $L^\infty$ errors for $\rho$ at various resolutions using ESDG-O2 and ESDG-O3 schemes}
		\label{tab:acc_isen_k1k2}
	\end{table}
	
	In Figure \ref{fig:pb19}, we present the computational results of ESDG-O2 and ESDG-O3 at time $t=0.8$. We note that the ESDG-O2 at 100 cells is slightly more diffusive, but when using 500 cells, it matches the exact solution. On the other hand, ESDG-O3 matches the exact solution even at 100 cells. This can also be seen from the entropy decay plots, where we note that both ESDG-O2 and ESDG-O3 produce no unnecessary dissipation at both resolutions.
	
	To test the accuracy, In Table \ref{tab:acc_isen_k1k2}, we have presented  $L^1$ and $L^\infty$ errors for $\rho$ at various resolutions. We note that both schemes have the desired order of accuracy in $L^1$ norm. However, in the $L^\infty$ norm ESDG-O2 scheme reaching accuracy only at higher resolutions, whereas ESDG-O3 is of desired order of accuracy. This performance is in complete contrast of finite-difference entropy stable schemes (see \cite{deepak2019entropy}) and other finite volume based schemes (see \cite{zhang2006ram}). There we note that the third and fourth-order schemes do not reach more than second-order accuracy in $L^1$ norm, even at very high resolutions. It demonstrates the very high accuracy of the proposed schemes when compared to standard finite difference and finite volume schemes.
\end{example}

\begin{example}[Riemann Problem 1:]
	\label{test3}
In this test case, we consider a Riemann problem from \cite{Mignone2005HLLC}. We use computational domain of $[0,1]$ with initial discontinuity at $x=0.5$. The states are given by,
	\begin{equation*}
	\left(\rho,\,u,\,p\right)=\begin{cases}
	\left(1,-0.6,10\right) & \text{if $x<0.5$}\\
	\left(10,0.5,20\right) &  \text{if $x>0.5$}
	\end{cases}.
	\end{equation*}
The exact solution contains two rarefaction waves moving in opposite direction separated by a contact wave. We use outflow boundary conditions.
	\begin{figure}
	\centering
	\begin{subfigure}[b]{0.45\textwidth}
		\includegraphics[width=\textwidth]{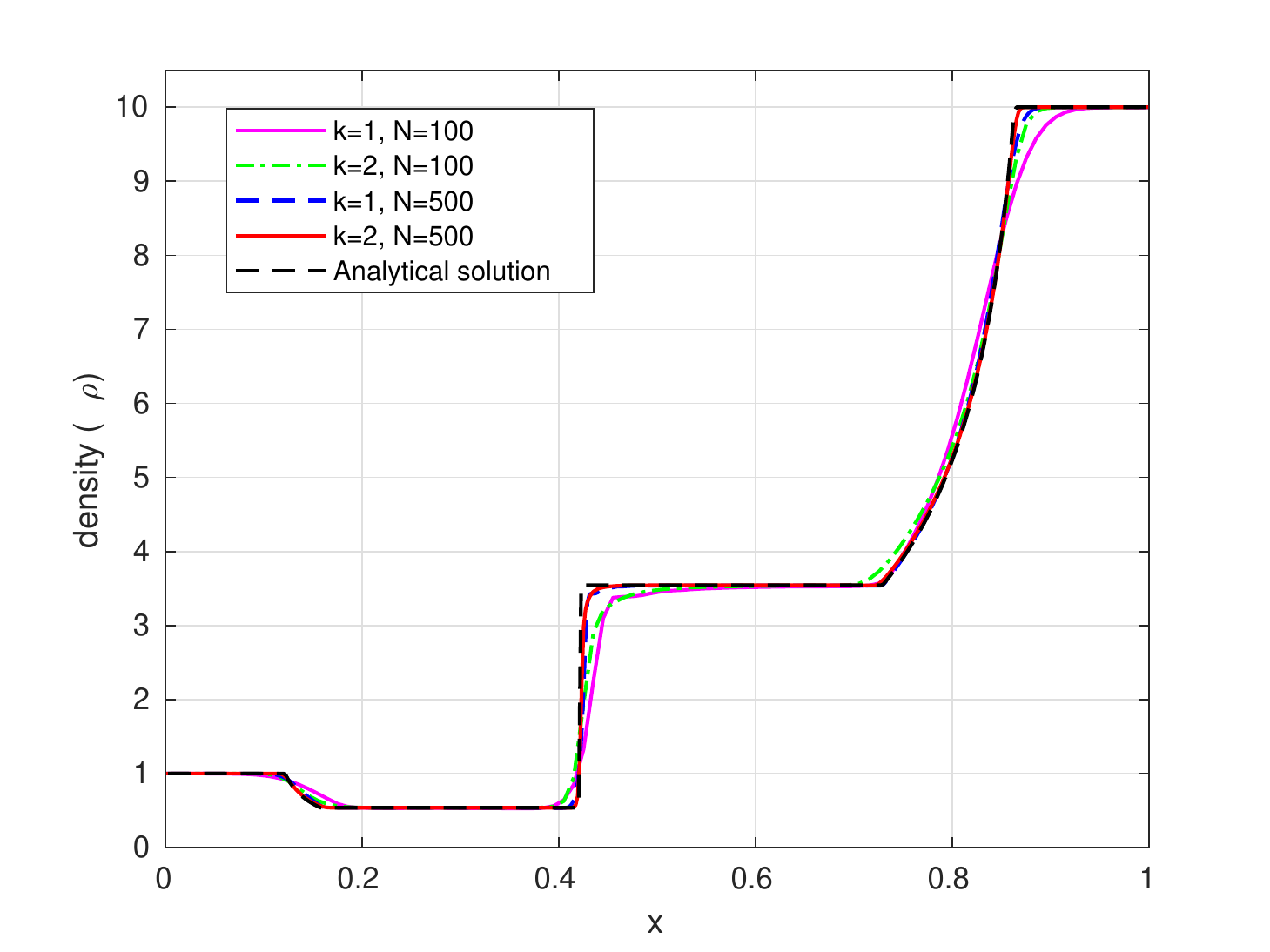}
		\caption{Density}
	\end{subfigure}
	\begin{subfigure}[b]{0.45\textwidth}
	\includegraphics[width=\textwidth]{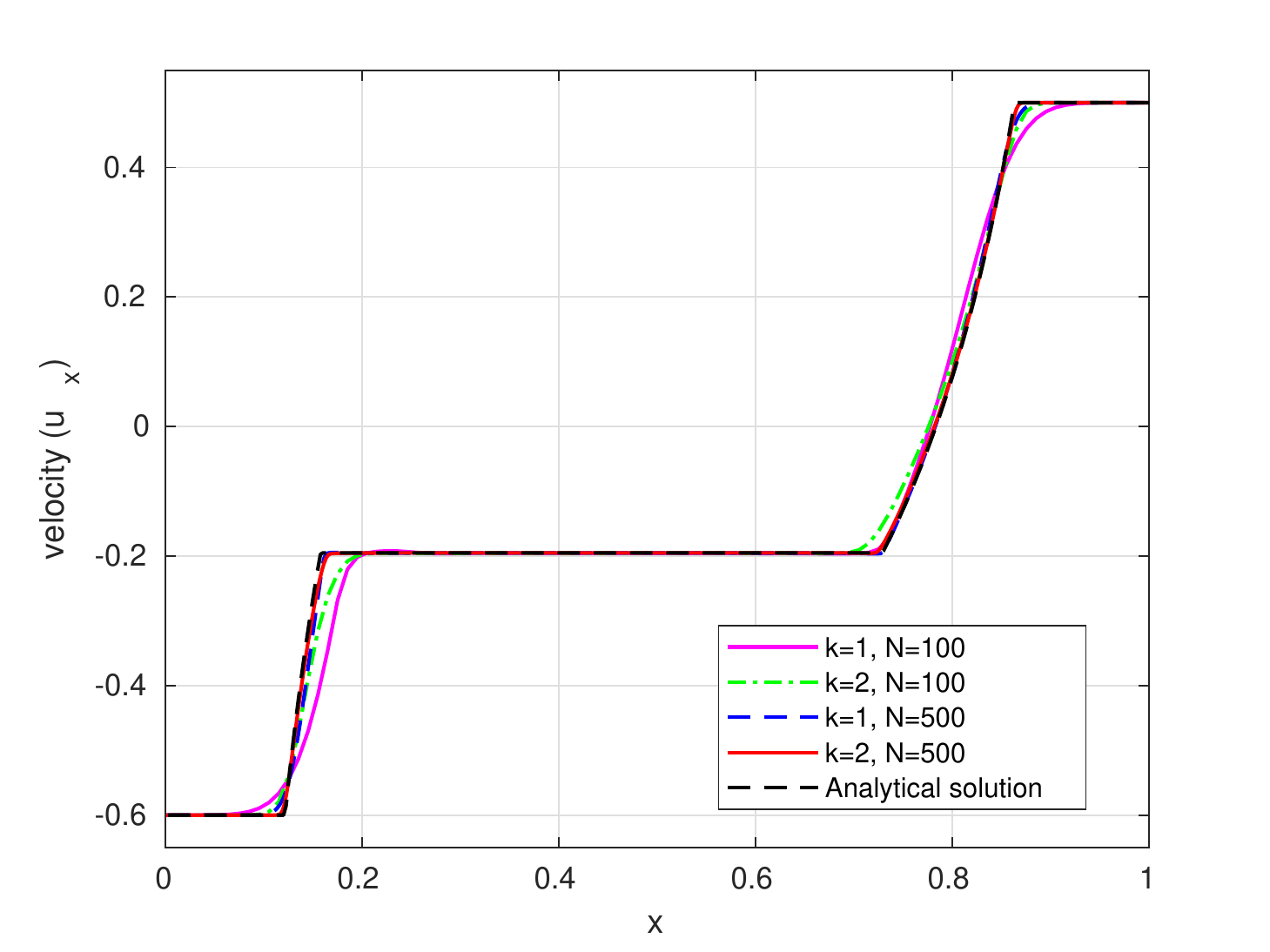}
	\caption{Velocity}
\end{subfigure}
	\begin{subfigure}[b]{0.45\textwidth}
		\includegraphics[width=\textwidth]{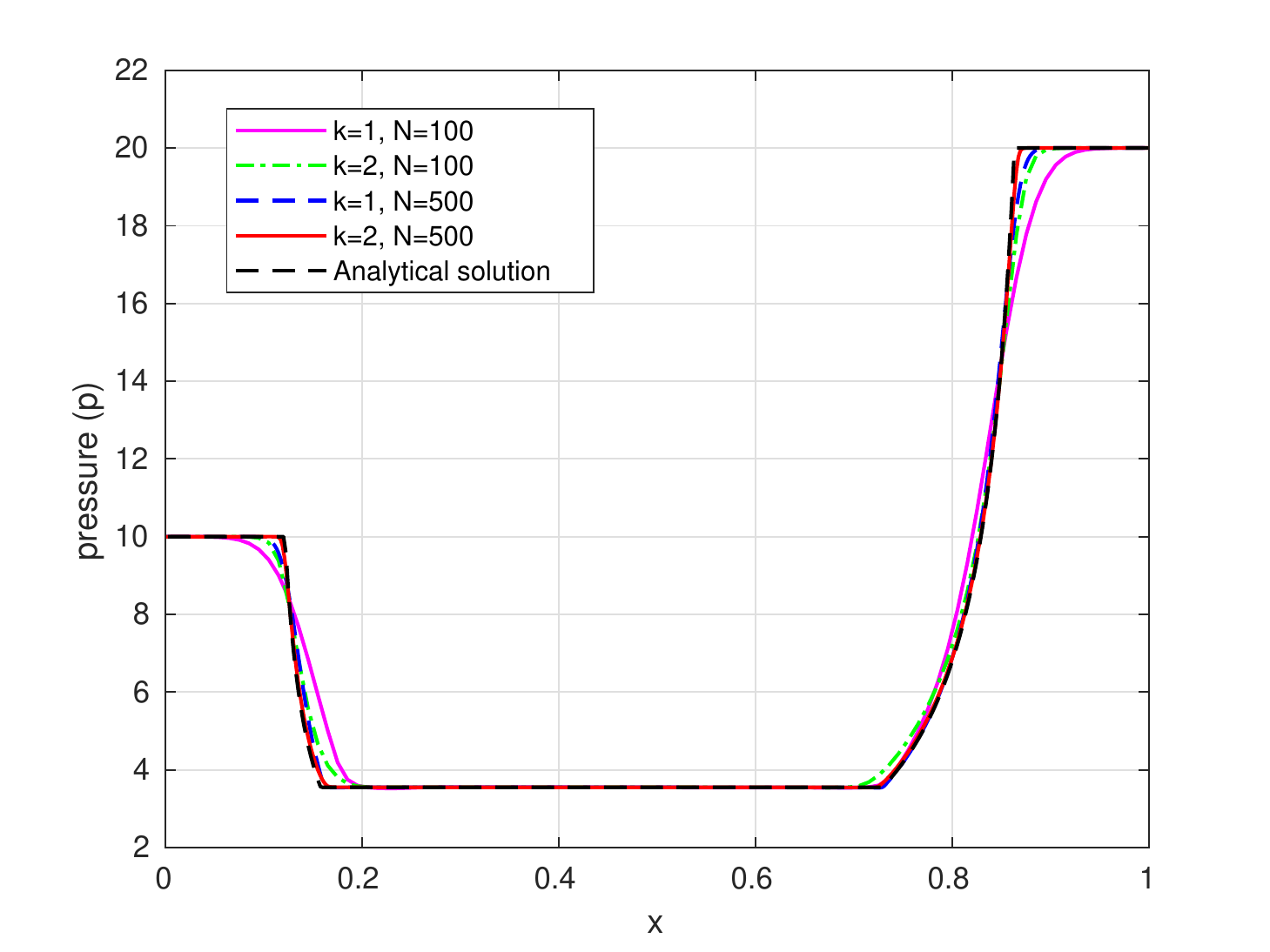}
		\caption{Pressure}
	\end{subfigure}
	\begin{subfigure}[b]{0.45\textwidth}
		\includegraphics[width=\textwidth]{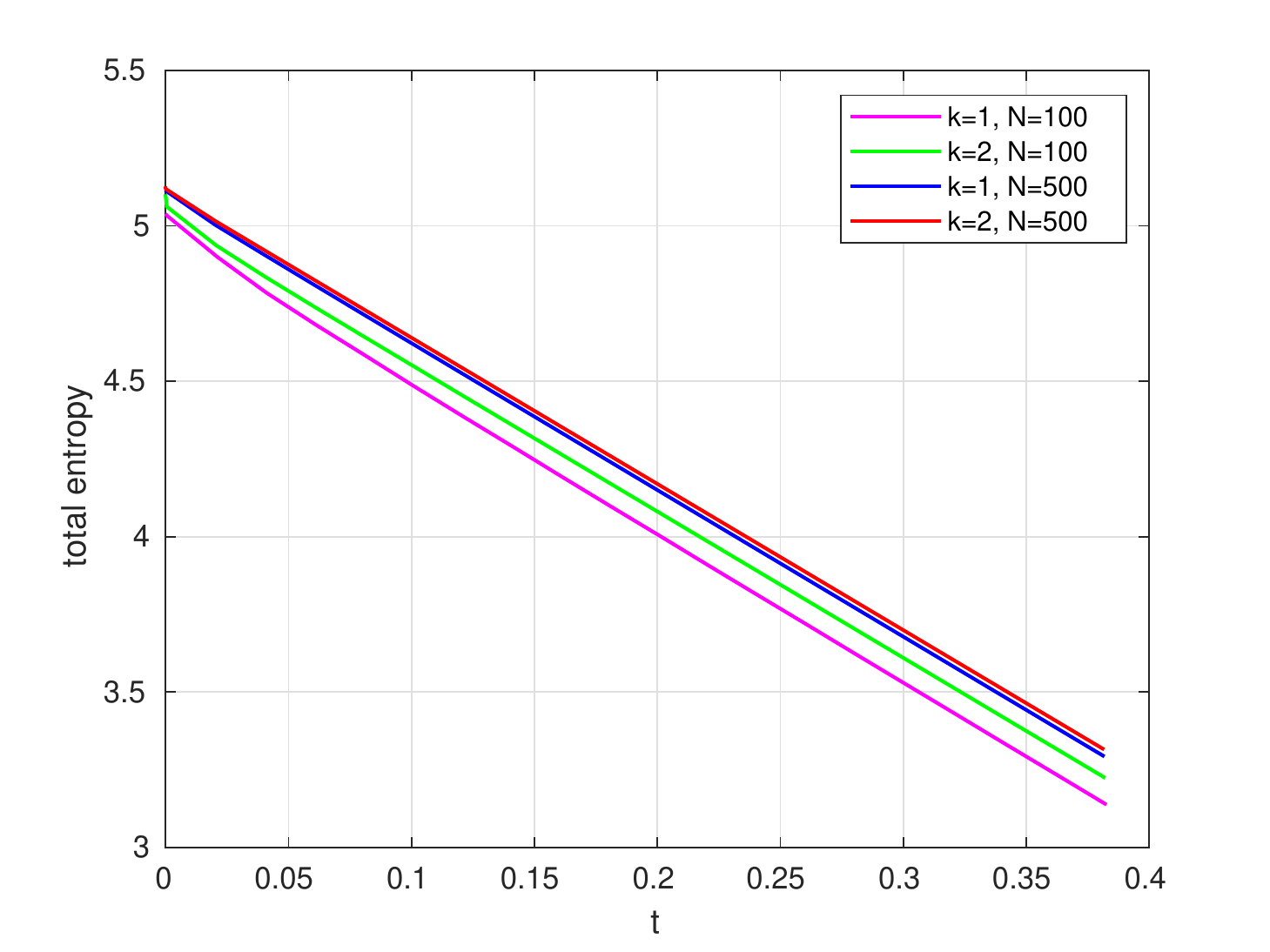}
		\caption{Evolution of total entropy}
	\end{subfigure}
	\caption{Test Problem \ref{test3} (Riemann Problem 1): Plots of density, velocity, pressure and total entropy evolution for ESDG-O2(k=1) and ESDG-O3(k=2) using 100 and 500 cells.}
	\label{fig:pb3}
\end{figure}
The numerical solutions are presented in Figure \ref{fig:pb3} at the final time of $0.4$ for both ESDG-O2 and ESDG-O3 numerical schemes. At the resolution of $100$ cells, we note that the ESDG-O3 scheme is more accurate than the ESDG-O2 scheme. At the resolution of $500$ cells, both schemes have matched the exact solution. Furthermore, both schemes can capture rarefaction and contact waves. We have also plotted total entropy decay. At a coarse mesh of $100$ cells, ESDG-O2 decays more entropy compared to the ESDG-O3 scheme, whereas at the finer mesh of $500$ cells, both schemes have similar entropy decay performance.
\end{example}

\begin{example}[Riemann Problem 2:]
	\label{test4}
We consider a shock tube Riemann problem  from \cite{marti2003numerical}. The computational domain is $[0,1]$ with outflow boundary conditions. The initial conditions are given by,
	\begin{equation}\label{rp2}
	\left(\rho,\,u,\,p\right)=\begin{cases}
	\left(1,0,10^3\right) & \text{if $x<0.5$}\\
	\left(1,0,10^{-2}\right) &  \text{if $x>0.5$}
	\end{cases}.
	\end{equation}
The exact solutions contain all kinds of waves; hence, it is a suitable example to test the proposed schemes' wave-capturing ability. 

In Figure \ref{fig:pb6}, we have plotted density, velocity and pressure using $100$ and $500$ cells for both schemes at time $t=0.4$. At a coarse mesh of $100$ cells, both ESDG-O2 and ESDG-O3 has similar performance with ESDG-O3 being slightly more accurate. At the fine mesh of $500$ cells, both schemes are very close to the exact solution, with ESDG-O3 again slightly more accurate than the ESDG-O2 scheme. Both the schemes are able to capture all the waves at both resolutions. This can also be seen from the total entropy evolution plot. We observe both schemes perform similarly at both resolutions with ESDG-O2 being more entropy diffusive.
	
	\begin{figure}
		\centering
		\begin{subfigure}[b]{0.45\textwidth}
			\includegraphics[width=\textwidth]{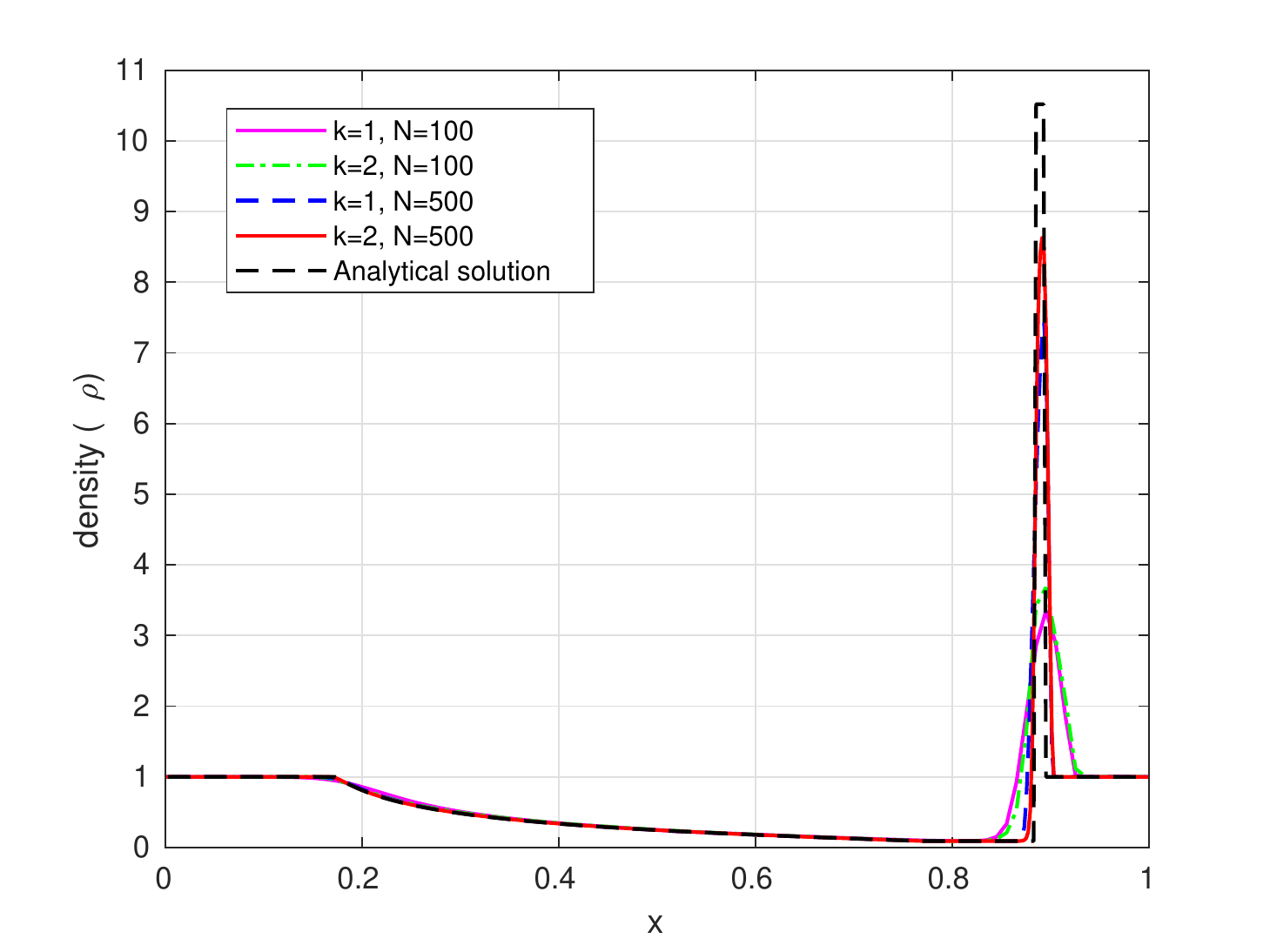}
			\caption{Density}
		\end{subfigure}
		\begin{subfigure}[b]{0.45\textwidth}
		\includegraphics[width=\textwidth]{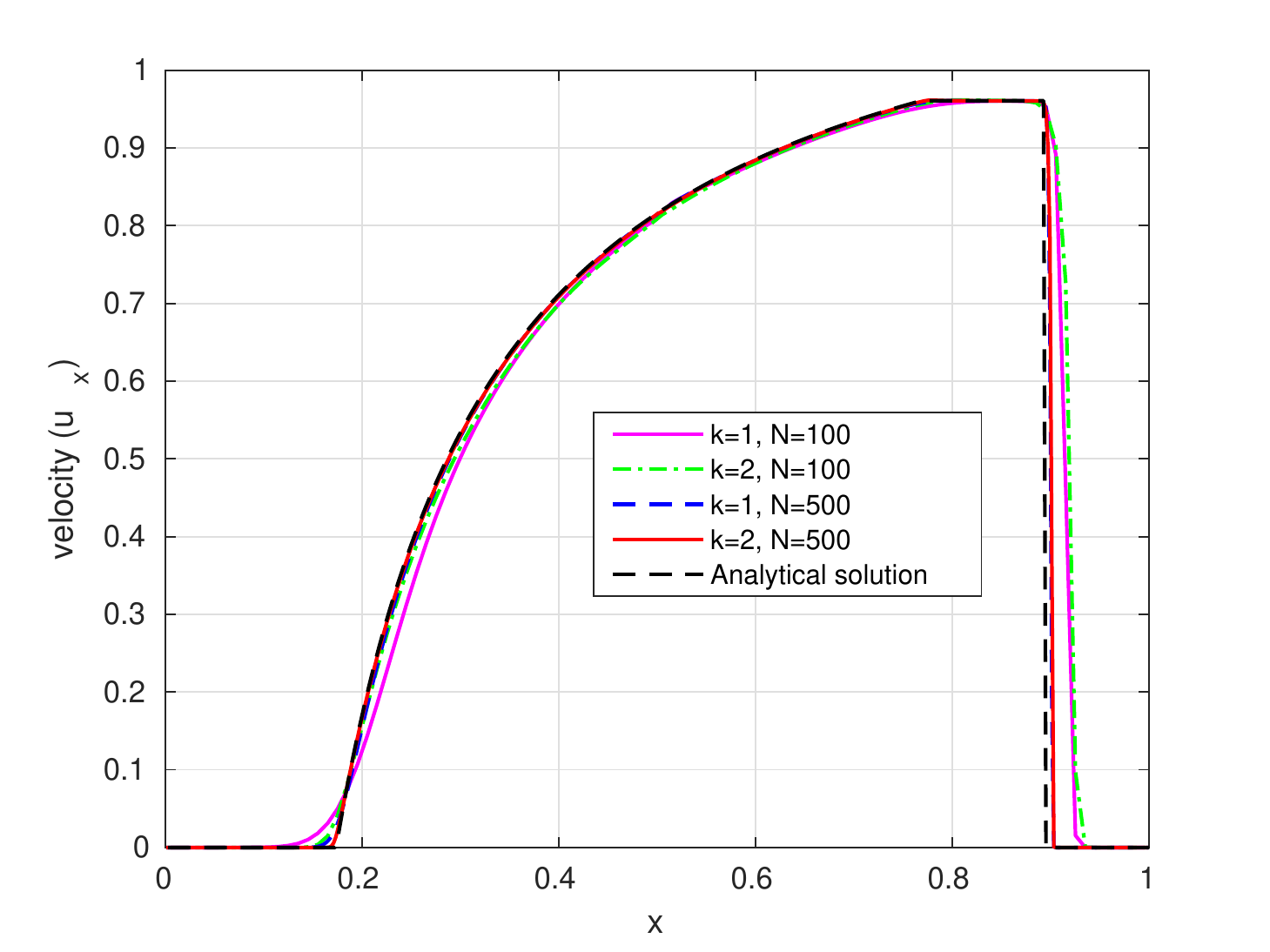}
		\caption{Velocity}
	\end{subfigure}
		\begin{subfigure}[b]{0.45\textwidth}
			\includegraphics[width=\textwidth]{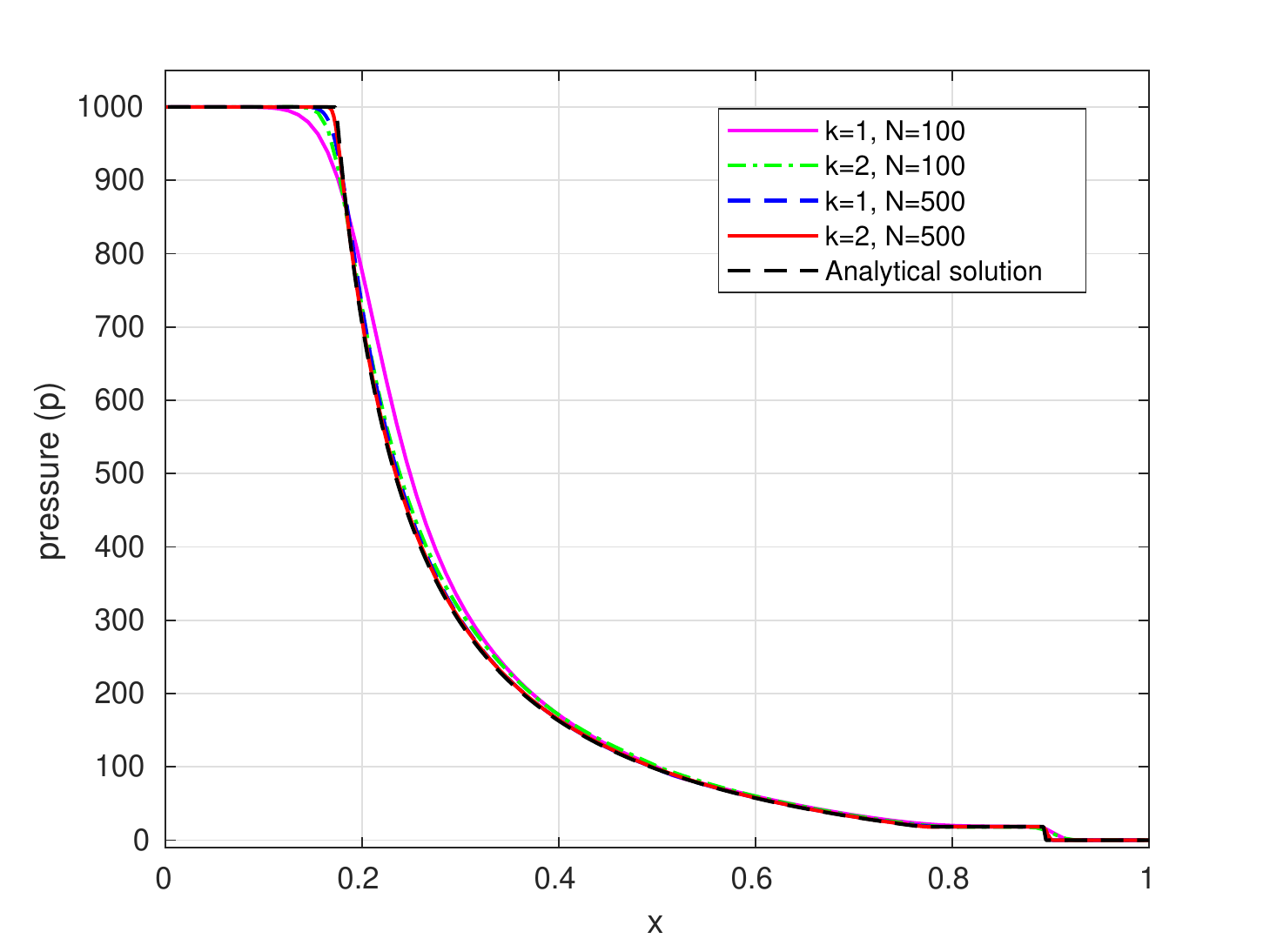}
			\caption{Pressure}
		\end{subfigure}
		\begin{subfigure}[b]{0.45\textwidth}
			\includegraphics[width=\textwidth]{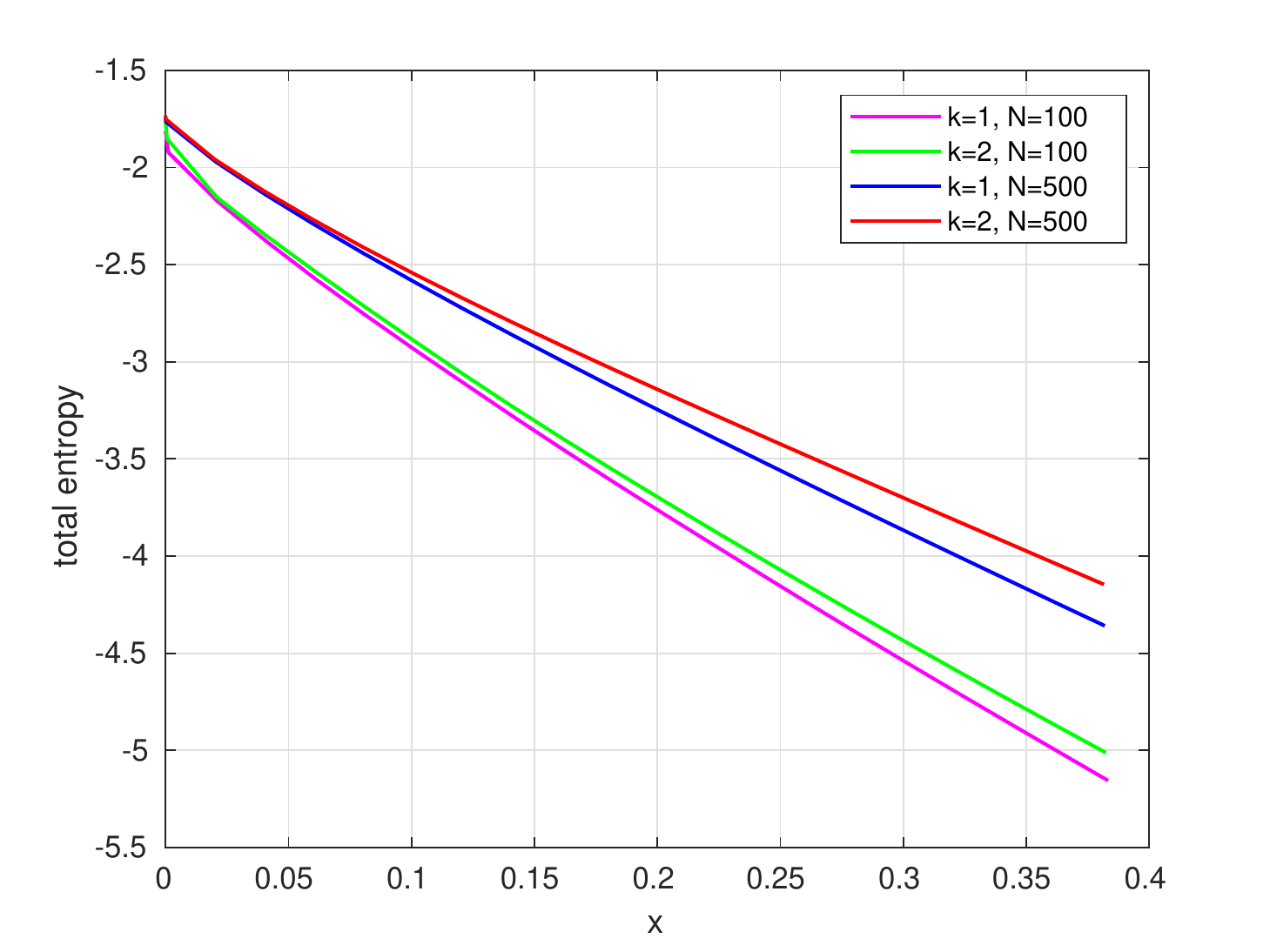}
			\caption{Evolution of total entropy}
		\end{subfigure}
		\caption{Test Problem \ref{test4} (Riemann Problem 2): Plots of density, velocity, pressure and total entropy evolution for ESDG-O2(k=1) and ESDG-O3(k=2) using 100 and 500 cells.}
		\label{fig:pb6}
	\end{figure}
\end{example}

\begin{example}[Riemann Problem 3:]
\label{test5}
We consider another Riemann problem form\cite{Mignone2005HLLC}.  On domain $[0,1]$, the states are given by,
	\begin{equation}\label{rp3}
	\left(\rho,\,u,\,p\right)=\begin{cases}
	\left(10,0,\dfrac{40}{3}\right) & \text{if $x<0.5$}\\
	\left(1,0,\dfrac{2}{3}\times10^{-6}\right) &  \text{if $x>0.5$}
	\end{cases}.
	\end{equation}
	We use outflow boundary conditions. The problem contains a very low pressure area. So, this Riemann problem will test the robustness of the numerical schemes. The computation results presented in Figure \ref{fig:pb7} at time $t=0.4$  using $100$ and $500$ cells. 
	
	\begin{figure}[htb!]
		\centering
		\begin{subfigure}[b]{0.45\textwidth}
			\includegraphics[width=\textwidth]{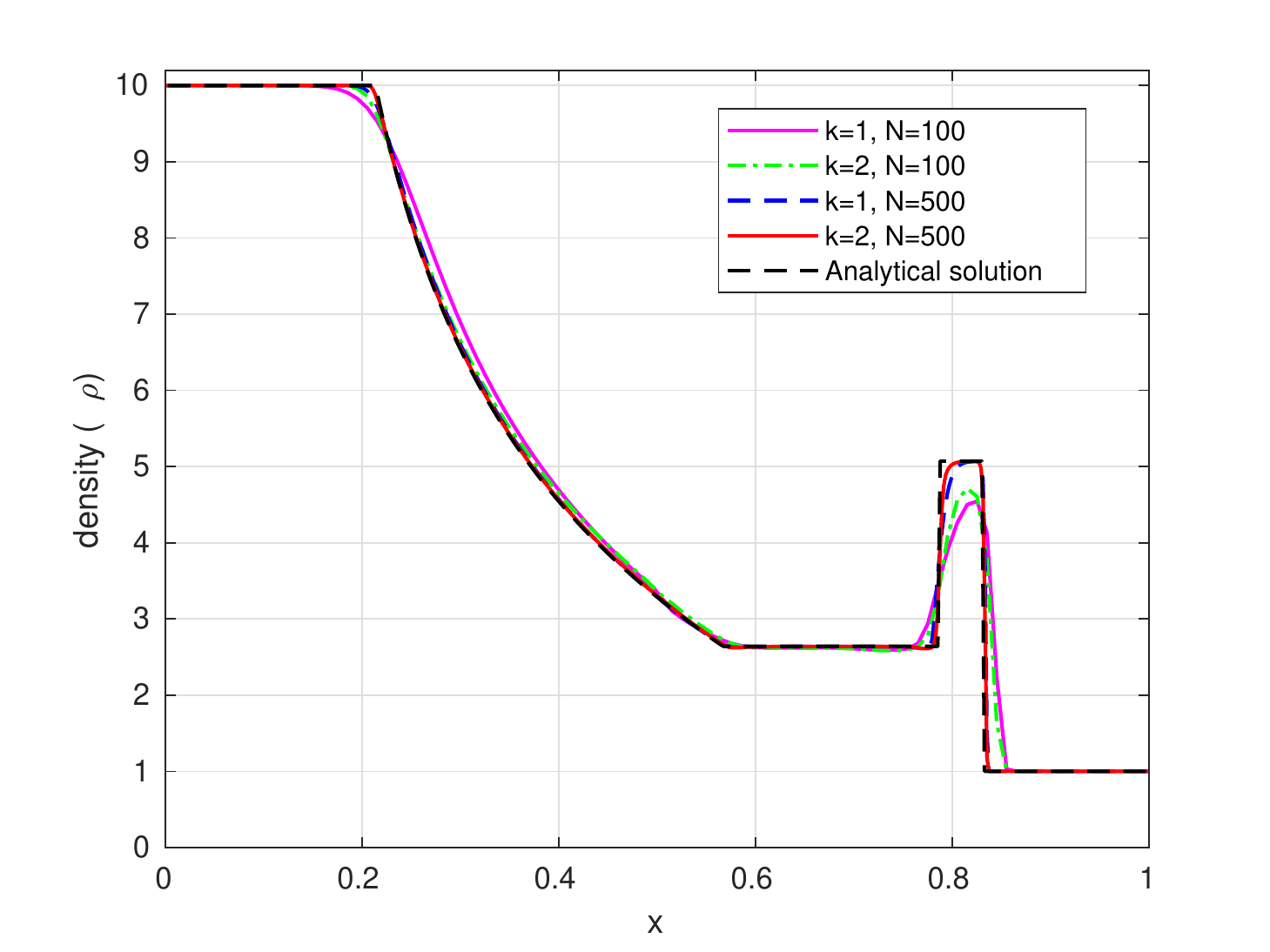}
			\caption{Density}
		\end{subfigure}
	\begin{subfigure}[b]{0.45\textwidth}
		\includegraphics[width=\textwidth]{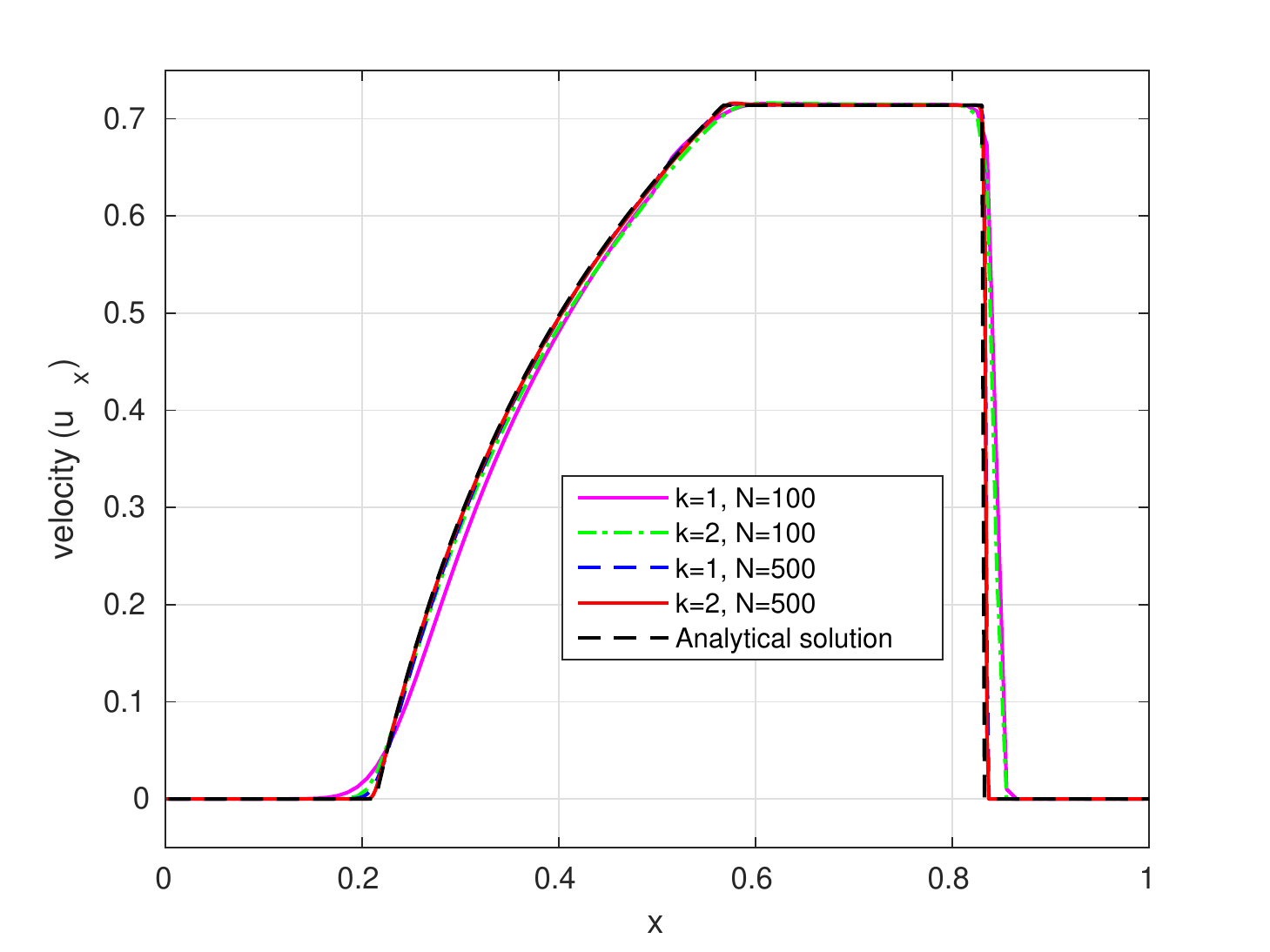}
		\caption{Velocity}
	\end{subfigure}
		\begin{subfigure}[b]{0.45\textwidth}
			\includegraphics[width=\textwidth]{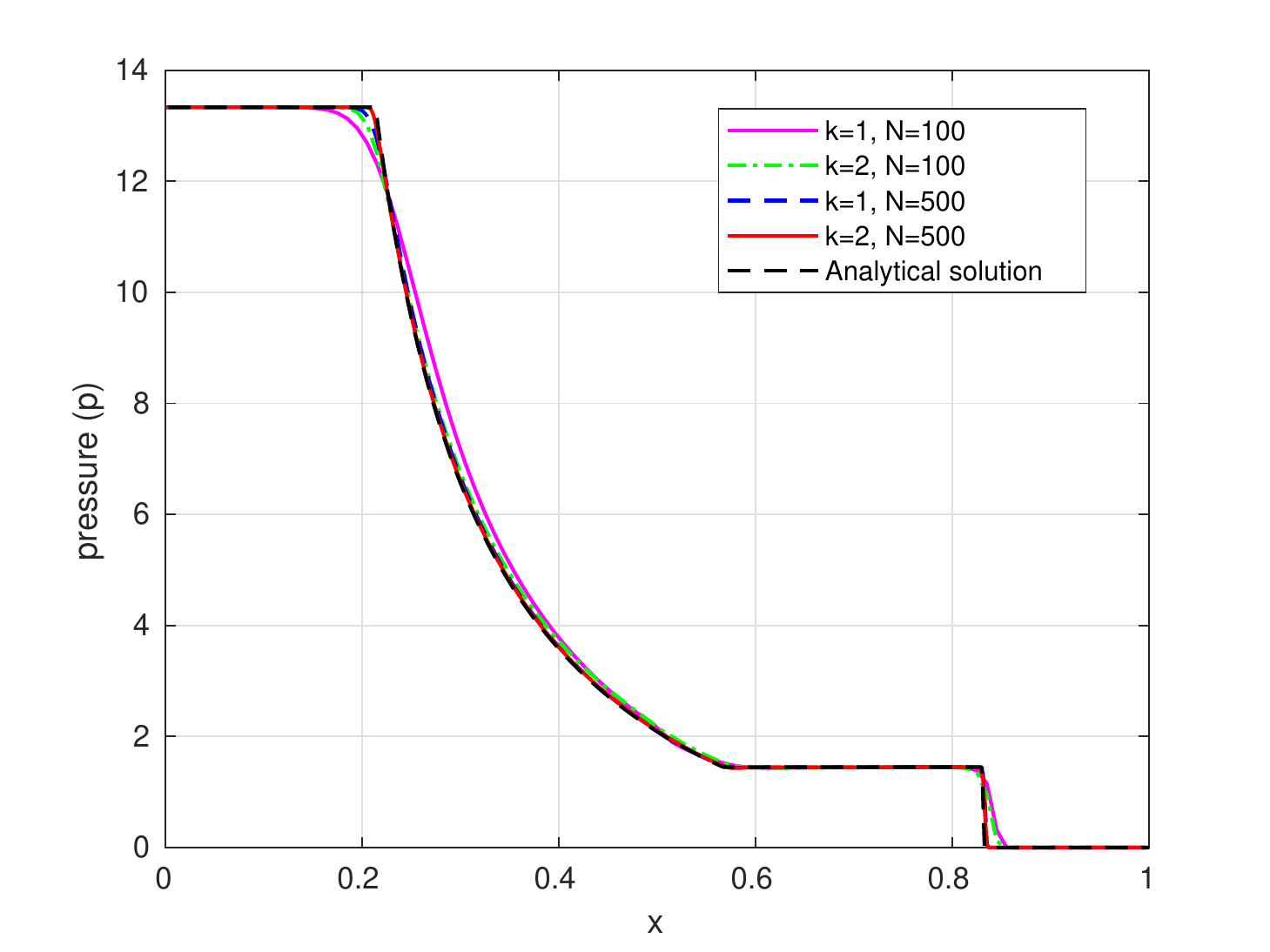}
			\caption{Pressure}
		\end{subfigure}
		\begin{subfigure}[b]{0.45\textwidth}
			\includegraphics[width=\textwidth]{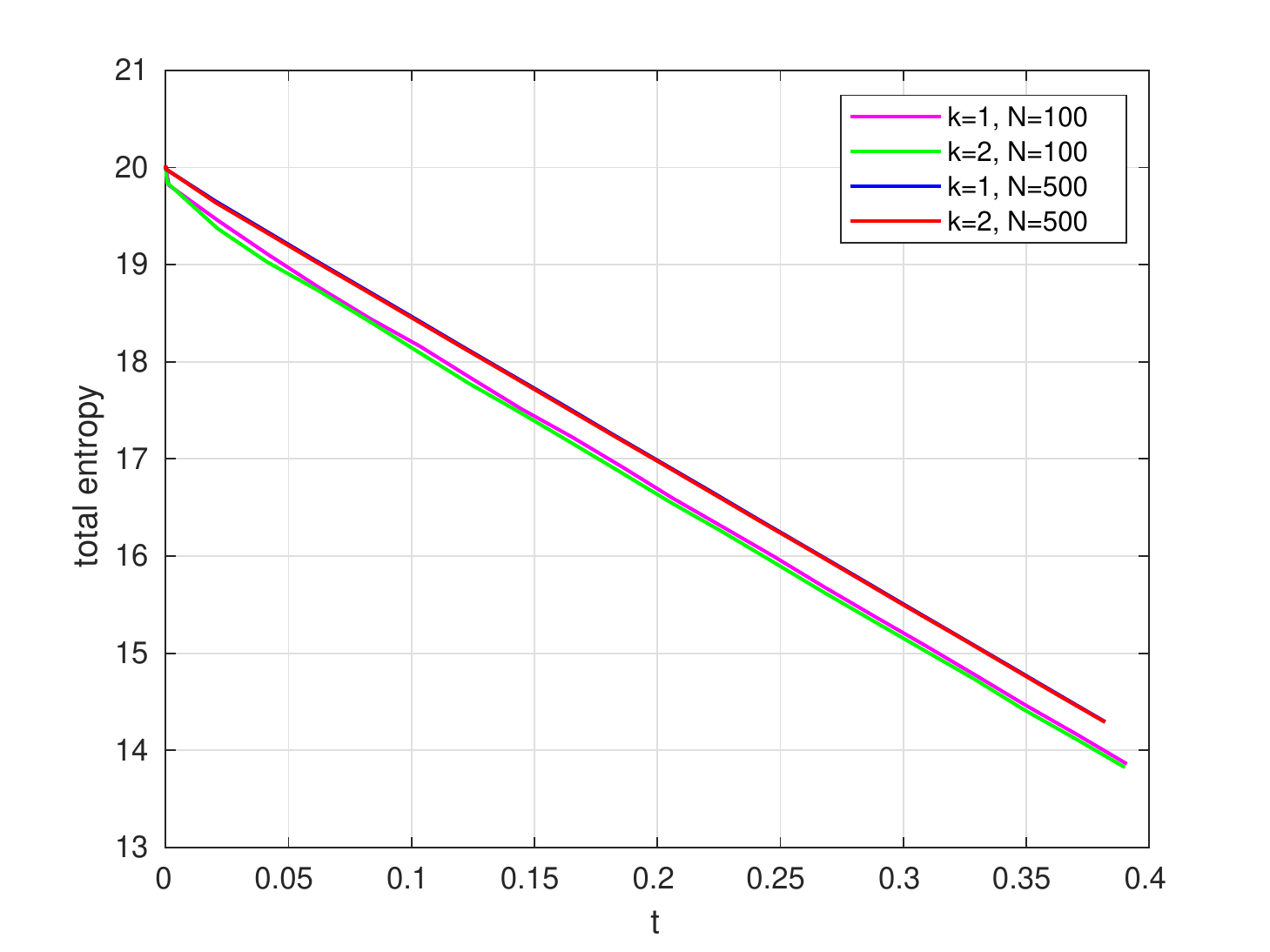}
			\caption{Evolution of total entropy}
		\end{subfigure}
		\caption{Test Problem \ref{test5} (Riemann Problem 3): Plots of density, velocity, pressure and total entropy evolution for ESDG-O2(k=1) and ESDG-O3(k=2) using 100 and 500 cells.}
		\label{fig:pb7}
	\end{figure}
We observe that both schemes are stable and have a similar performance at $100$ cells. At $500$ cells, both schemes are very close to the exact solution and have similar wave capturing ability. We also observe that both schemes have similar entropy decay rates at both resolutions.

\end{example}

\begin{example}[Riemann Problem 4:]
\label{test6}
In this test case, we again consider another Riemann problem from \cite{Mignone2005HLLC}. The computational domain is set to be $[0,1]$, with initial condition, 
	\begin{equation}\label{}
	\left(\rho,\,u,\,p\right)=\begin{cases}
	\left(1,0.9,1\right) & \text{if $x<0.5$}\\
	\left(1,0,10\right) &  \text{if $x>0.5$}
	\end{cases}.
	\end{equation}
The exact solution contains two shock waves and a contact discontinuity. Numerical solutions are plotted in Figure \ref{fig:pb9}  at time $t=0.4$ using outflow boundary conditions.  
	\begin{figure}[htb!]
		\centering
		\begin{subfigure}[b]{0.45\textwidth}
			\includegraphics[width=\textwidth]{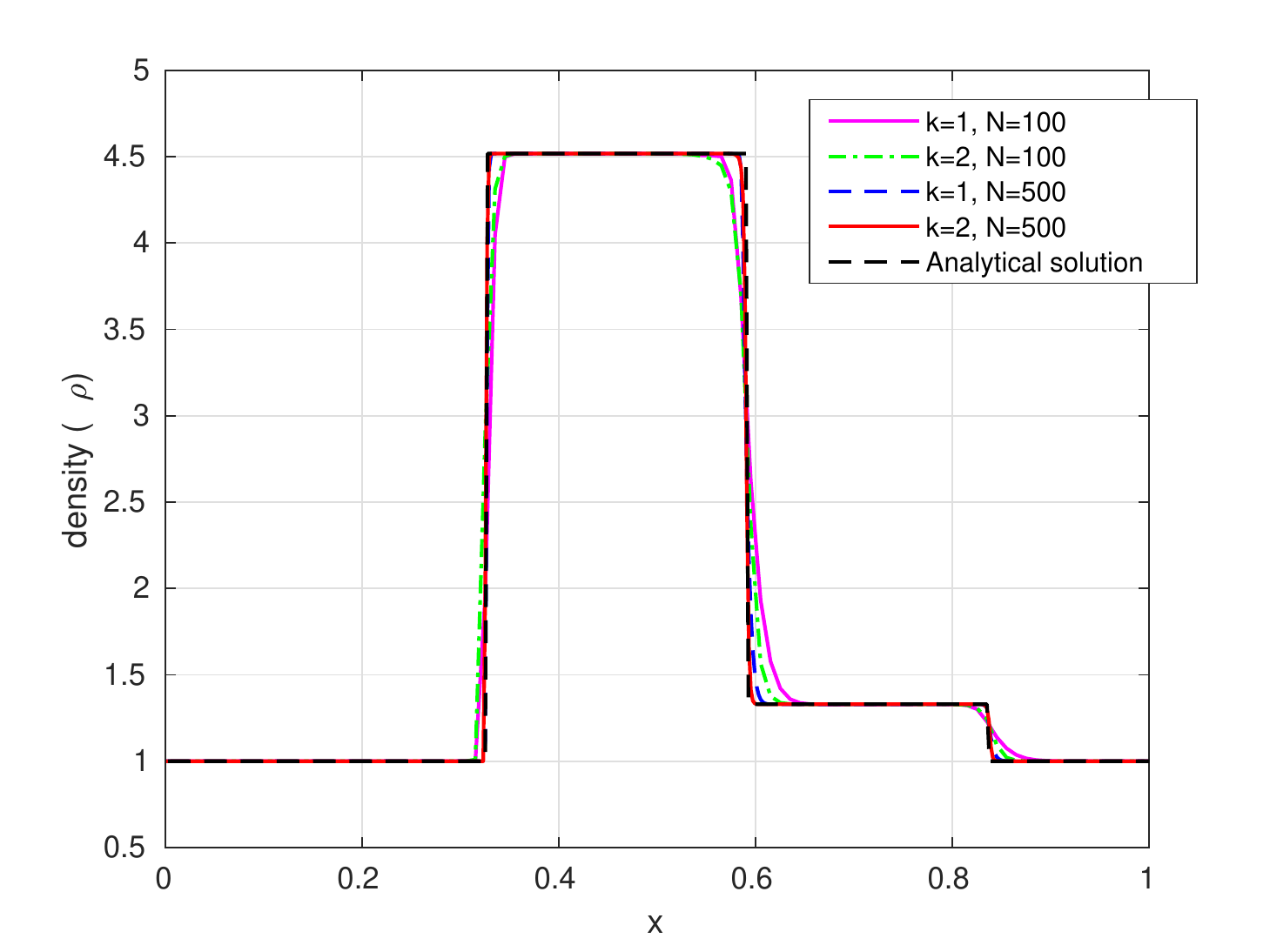}
			\caption{Density}
		\end{subfigure}
		\begin{subfigure}[b]{0.45\textwidth}
		\includegraphics[width=\textwidth]{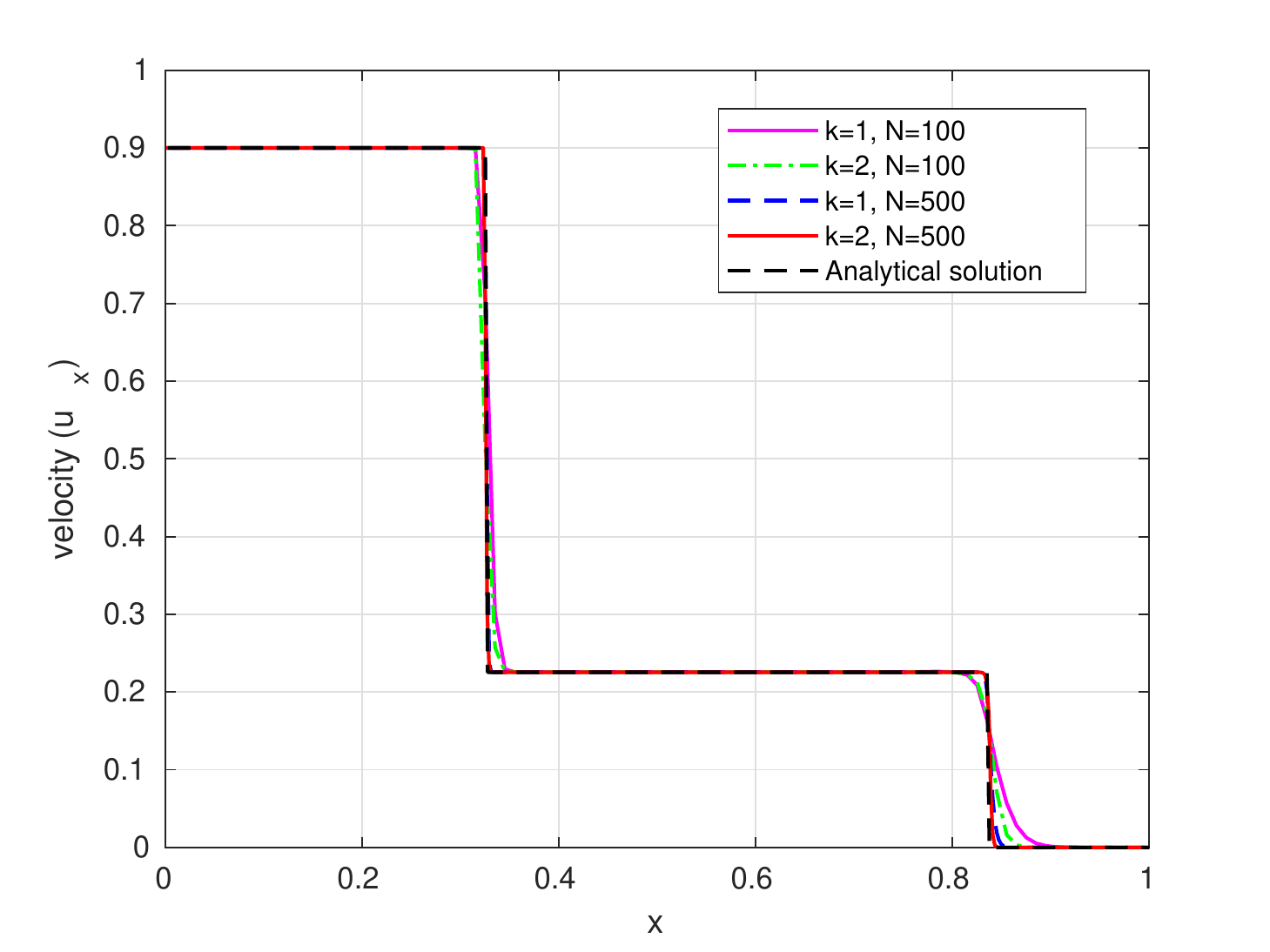}
		\caption{Velocity}
	\end{subfigure}
		\begin{subfigure}[b]{0.45\textwidth}
			\includegraphics[width=\textwidth]{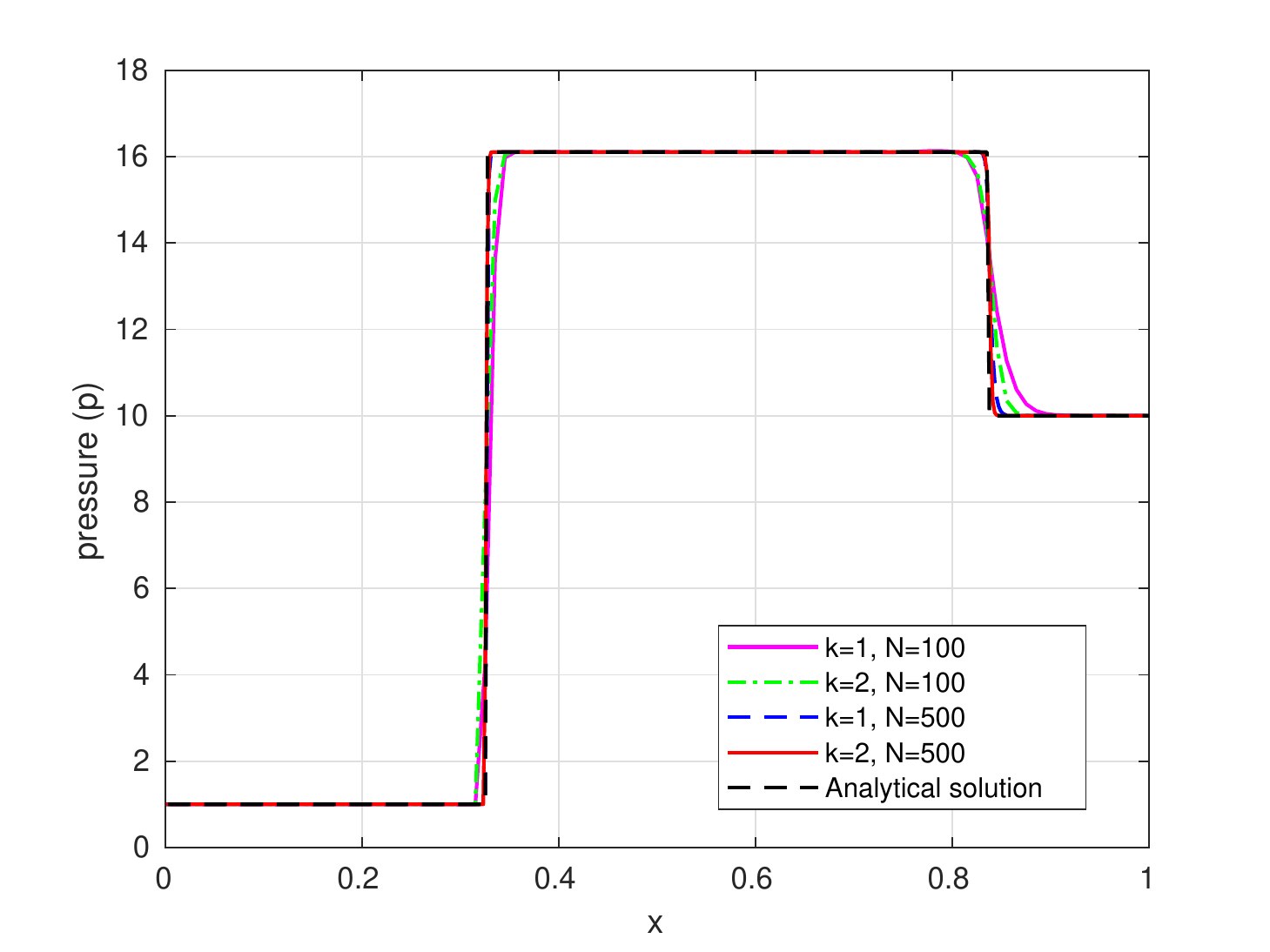}
			\caption{Pressure}
		\end{subfigure}
		\begin{subfigure}[b]{0.45\textwidth}
			\includegraphics[width=\textwidth]{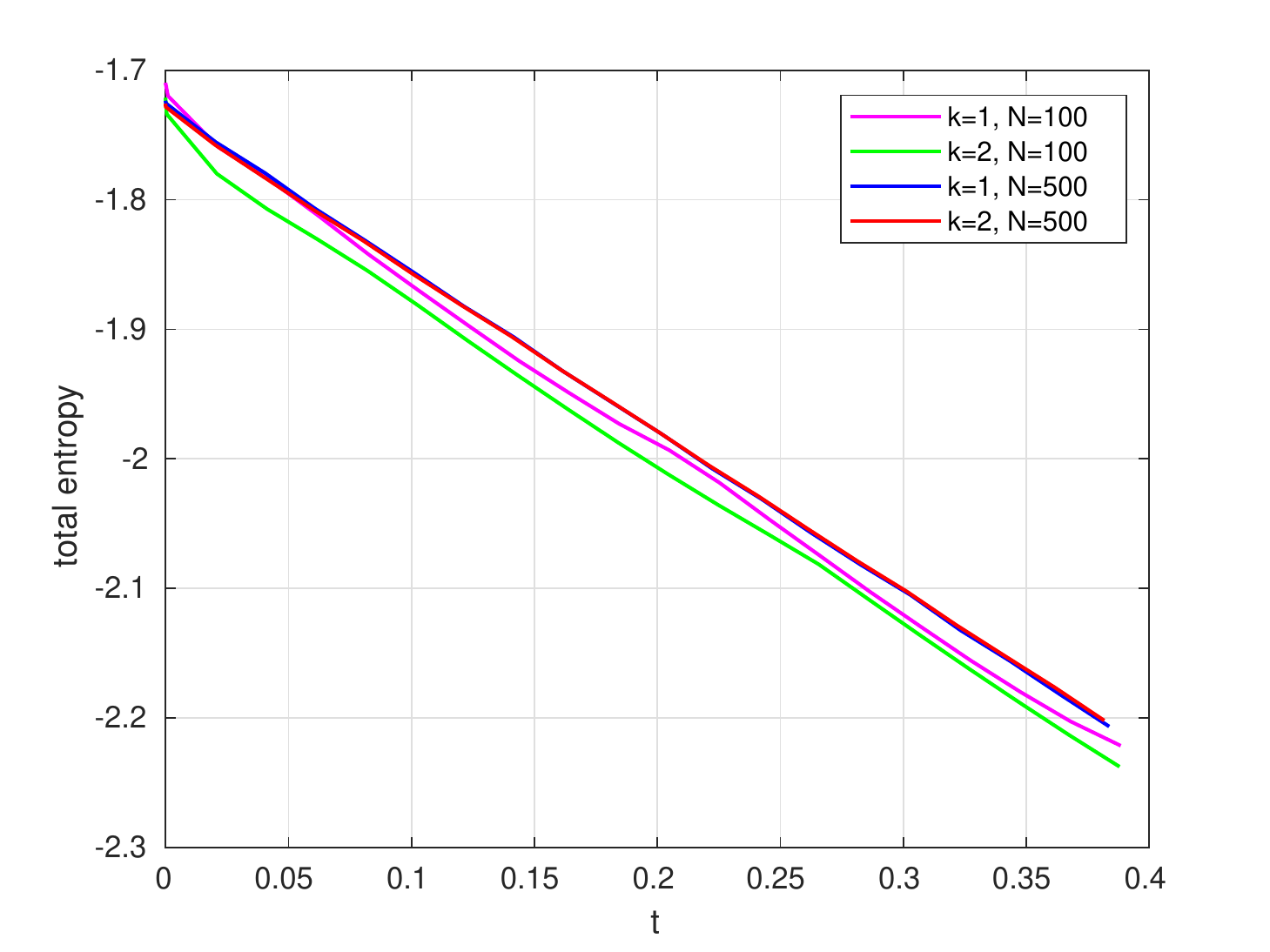}
			\caption{Evolution of total entropy}
		\end{subfigure}
		\caption{Test Problem \ref{test6} (Riemann Problem 4): Plots of density, velocity, pressure and total entropy evolution for ESDG-O2(k=1) and ESDG-O3(k=2) using 100 and 500 cells.}
		\label{fig:pb9}
	\end{figure}
We observe that both schemes are highly accurate in capturing the waves at both resolutions. We again observe only a small difference in the performance of the schemes at both resolutions.
\end{example}

\begin{example}[Density perturbation test case:]
	\label{test7}
	In this test case, we consider a problem from  \cite{del2002efficient}. The computational domain is $[0,1]$ with outflow boundary conditions. The initial conditions are,
	\begin{equation*}
	\left(\rho,\,u,\,p\right)=\begin{cases}
	\left(5,0,50\right) & \text{if $x<0.5$}\\
	\left(2 + 0.3\sin(50x),0,5\right) &  \text{if $x>0.5$}
	\end{cases}
	\end{equation*}
    The solution contains fluctuating smooth density waves, which are challenging to capture by any scheme. Numerical results are presented in Figure \ref{fig:pb8} at time $t=0.35$. At both resolutions, ESDG-O3 is more accurate than ESDG-O2. Furthermore, the solutions are highly accurate at the $500$ cells. We can also observe this from the total entropy evolution plot. At 100 cells, ESDG-O3 decays much less entropy than the ESDG-O2 scheme. Similar, at 500 cell ESDG-O3 is less entropy diffusive than the ESDG-O2 scheme. 

	\begin{figure}[htb!]
		\centering
		\begin{subfigure}[b]{0.45\textwidth}
			\includegraphics[width=\textwidth]{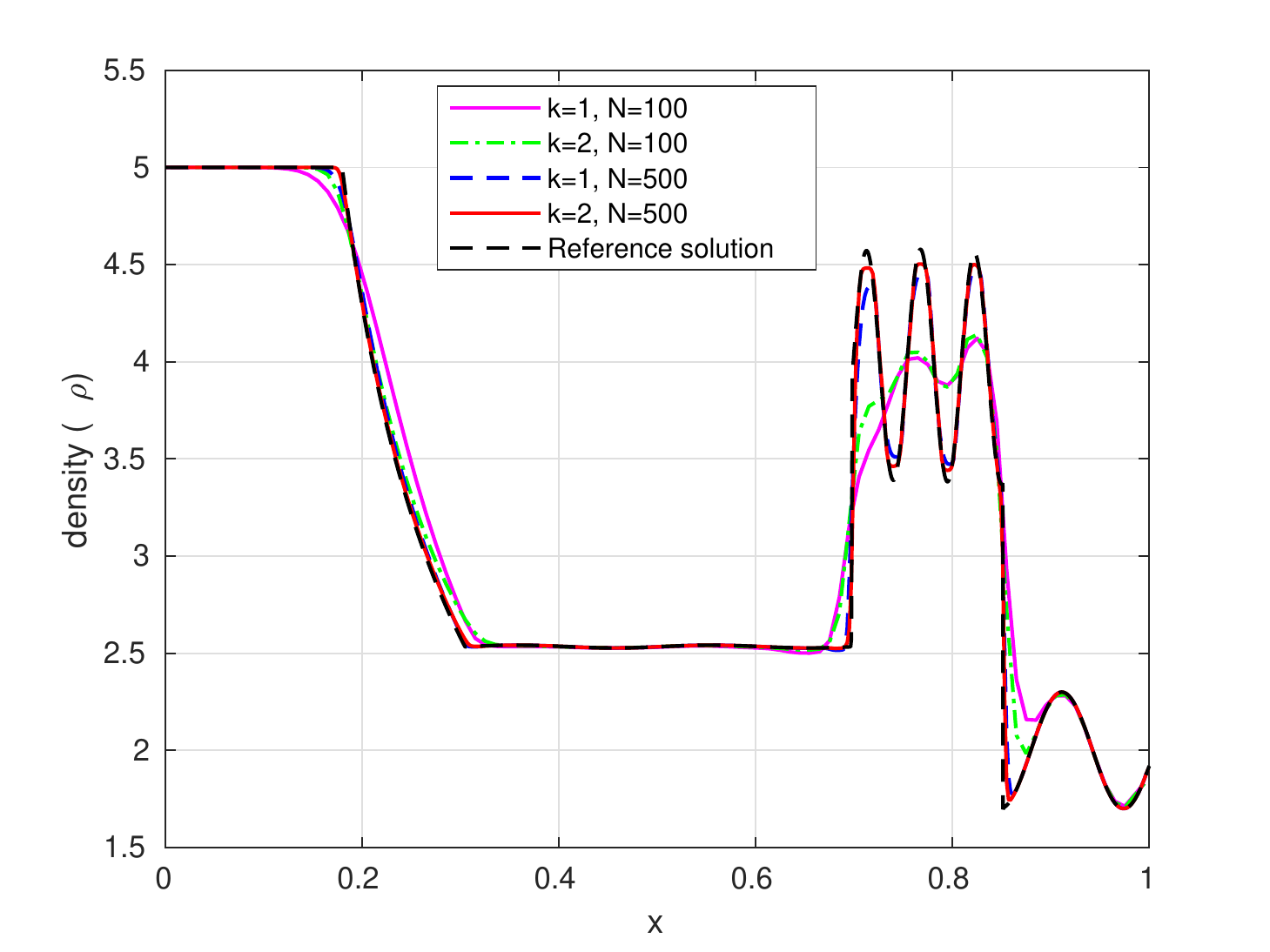}
			\caption{Density}
		\end{subfigure}
	\begin{subfigure}[b]{0.45\textwidth}
		\includegraphics[width=\textwidth]{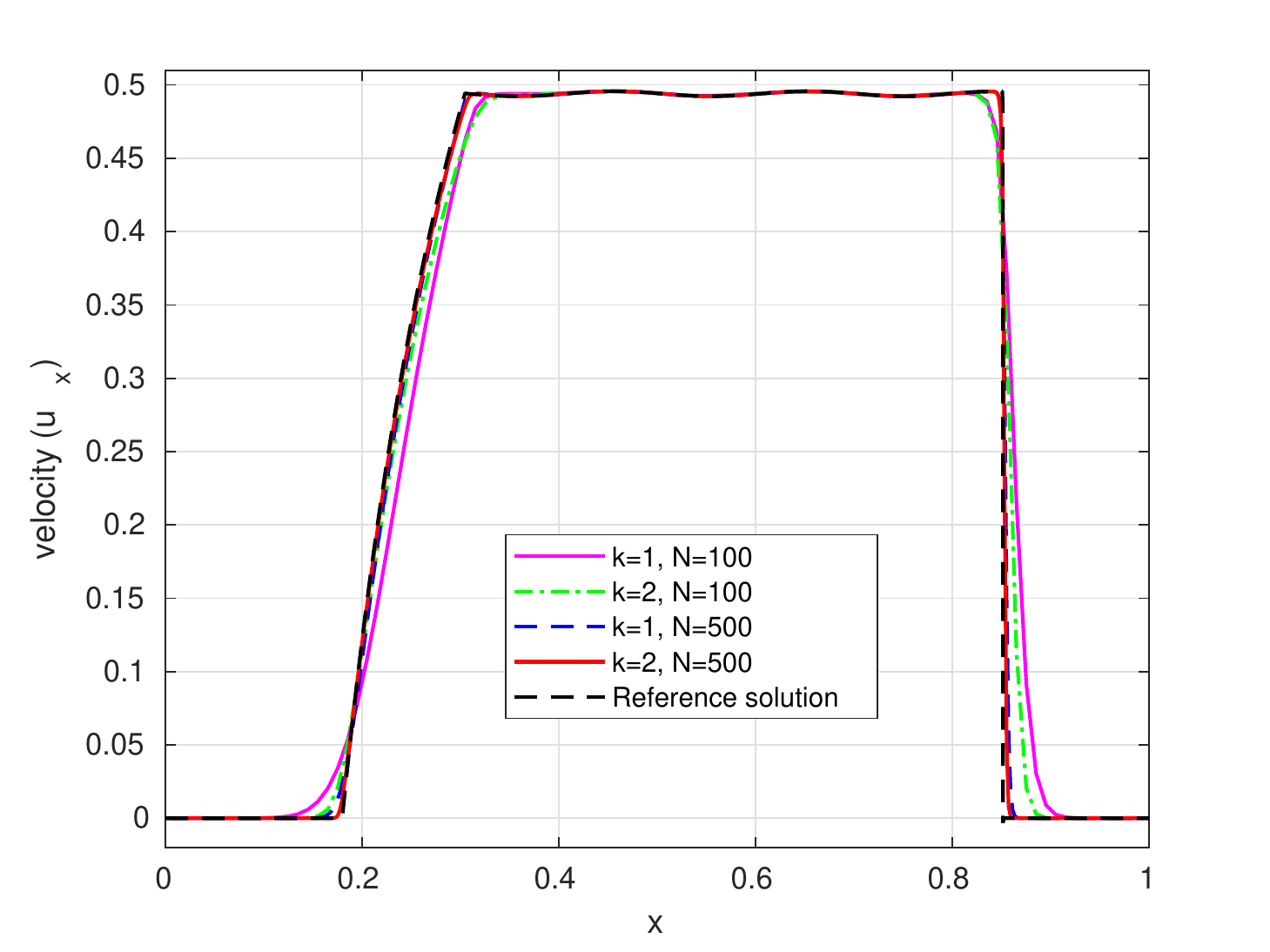}
		\caption{Velocity}
	\end{subfigure}
		\begin{subfigure}[b]{0.45\textwidth}
			\includegraphics[width=\textwidth]{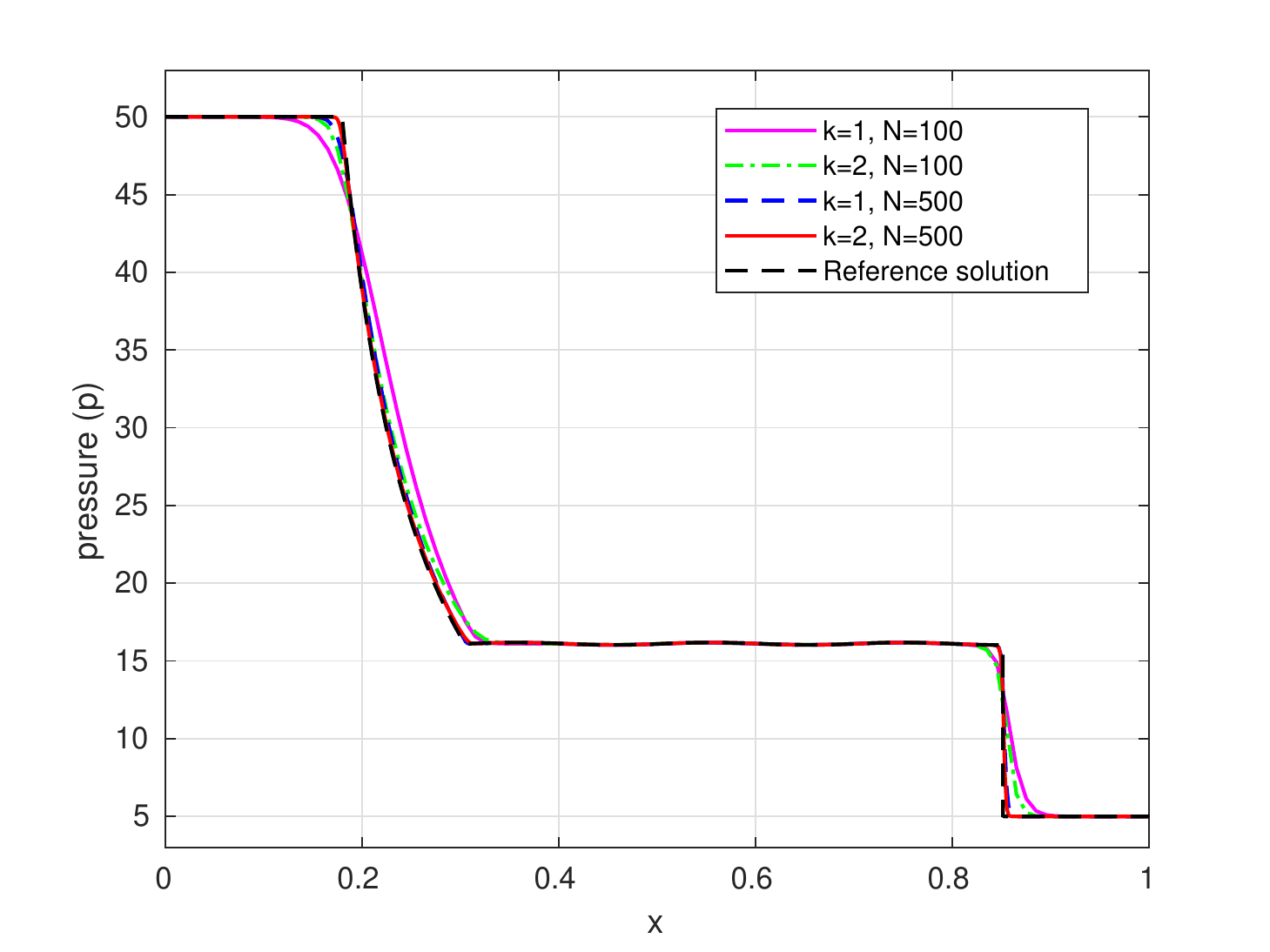}
			\caption{Pressure}
		\end{subfigure}
		\begin{subfigure}[b]{0.45\textwidth}
			\includegraphics[width=\textwidth]{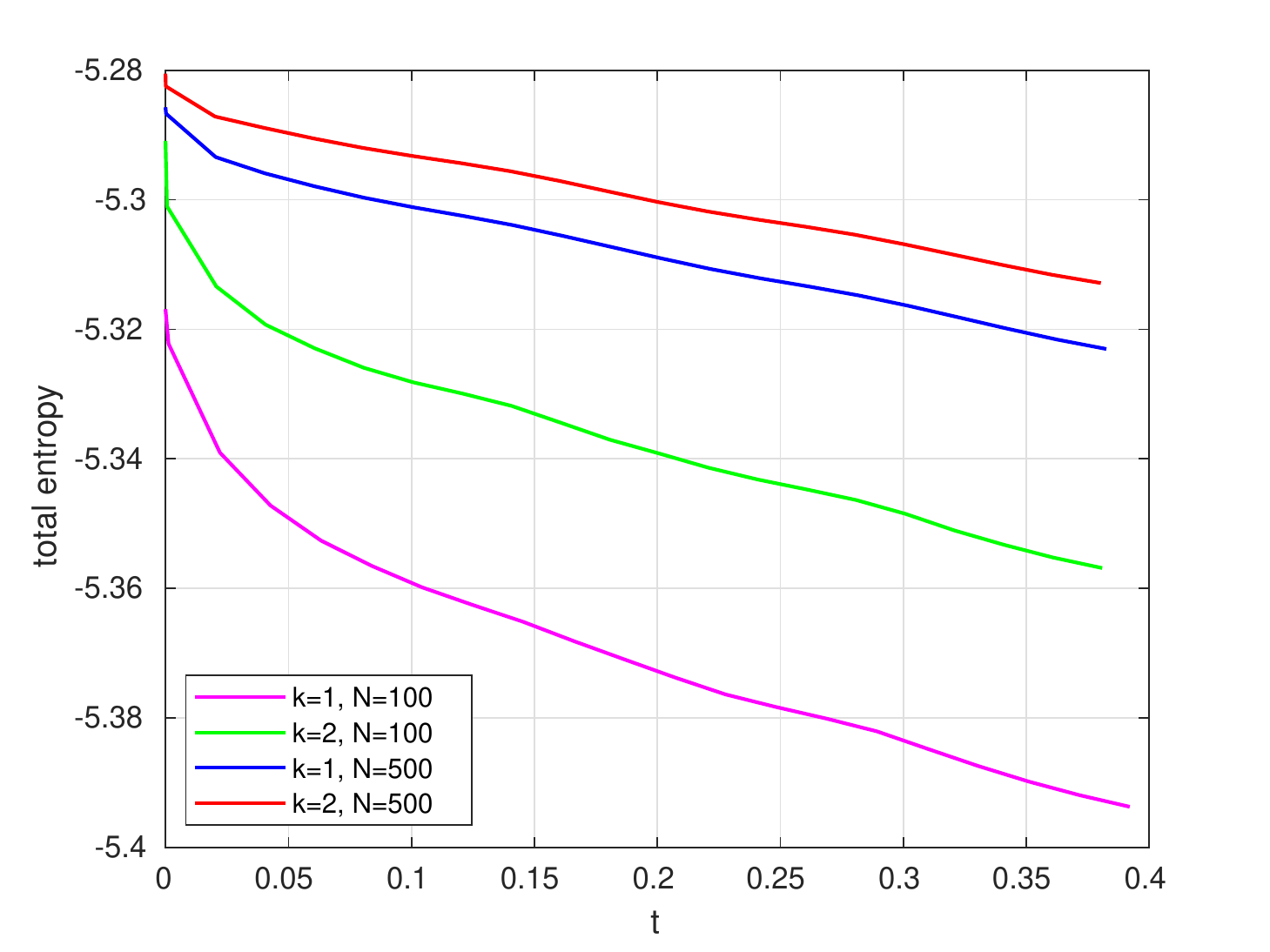}
			\caption{Evolution of total entropy}
		\end{subfigure}
		\caption{Test Problem \ref{test7} (Density perturbation test case): Plots of density, velocity, pressure and total entropy evolution for ESDG-O2(k=1) and ESDG-O3(k=2) using 100 and 500 cells.}
		\label{fig:pb8}
	\end{figure}
\end{example}

\begin{example}[Blast waves test case:]
	\label{test8}
For this test case,	we consider the blast wave interaction problem from \cite{marti1996extension}. We take computational domain of $[0,1]$ with outflow boundary conditions. The initial conditions are given by,
	\begin{equation*}
	\left(\rho,\,u,\,p\right)=\begin{cases}
	\left(1,0,1000\right) & \text{if $x<0.1$}\\
	\left(1,0,0.01\right) & \text{if $x>0.1$ and $x<0.9$}\\
	\left(1,0,100\right) &  \text{if $x>0.9$}
	\end{cases}
	\end{equation*}
    We take gas constant $\gamma=1.4$. As the solution features are concentrated in a very narrow zone; hence we need very high resolution to capture the solution. Hence we take two meshes of  $2000$ and $4000$ cells. Numerical results are plotted in Figure \ref{fig:pb18}  at $T=0.43$ for both schemes. We have zoomed in on the solution at $x=0.52$ to show the schemes' performance more clearly. At both resolutions, we observe that ESDG-O3 is more accurate than the ESDG-O2 scheme. However, both schemes can capture all the features of the solution, especially at $4000$ cells. 
	
	\begin{figure}[htb!]
		\centering
		\begin{subfigure}[b]{0.45\textwidth}
			\includegraphics[width=\textwidth]{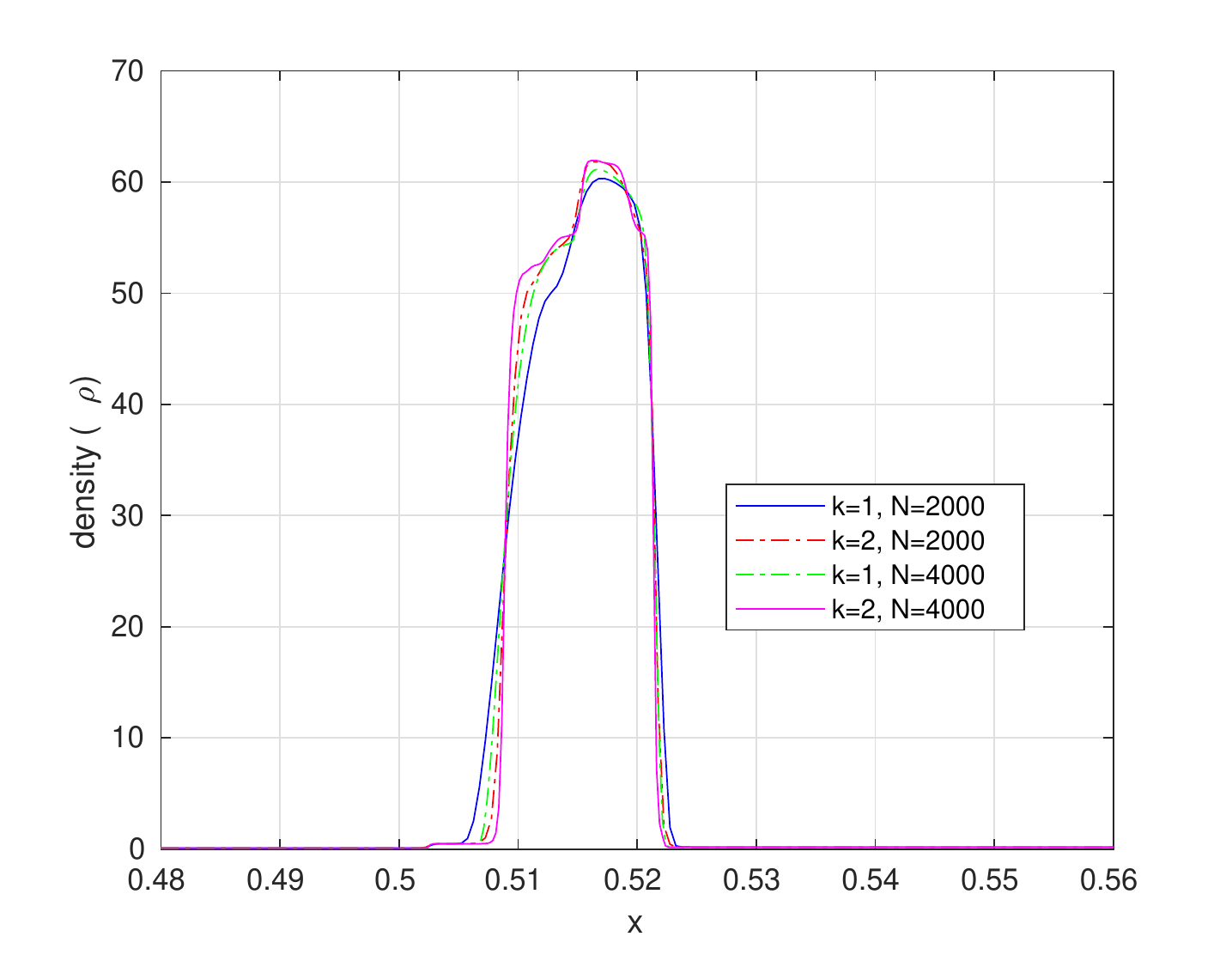}
			\caption{Density}
		\end{subfigure}
			\begin{subfigure}[b]{0.45\textwidth}
		\includegraphics[width=\textwidth]{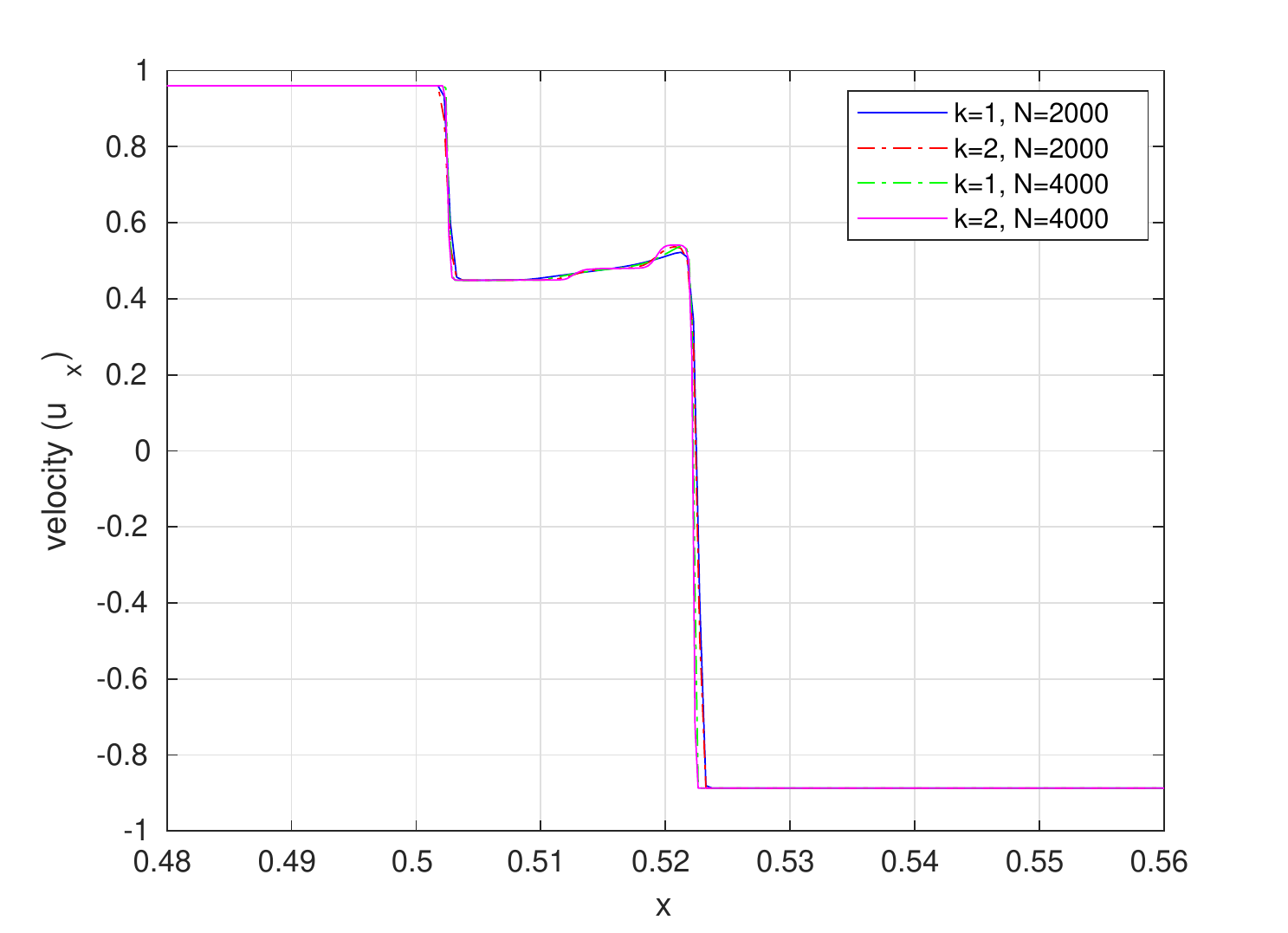}
		\caption{Velocity}
	\end{subfigure}
		\begin{subfigure}[b]{0.45\textwidth}
			\includegraphics[width=\textwidth]{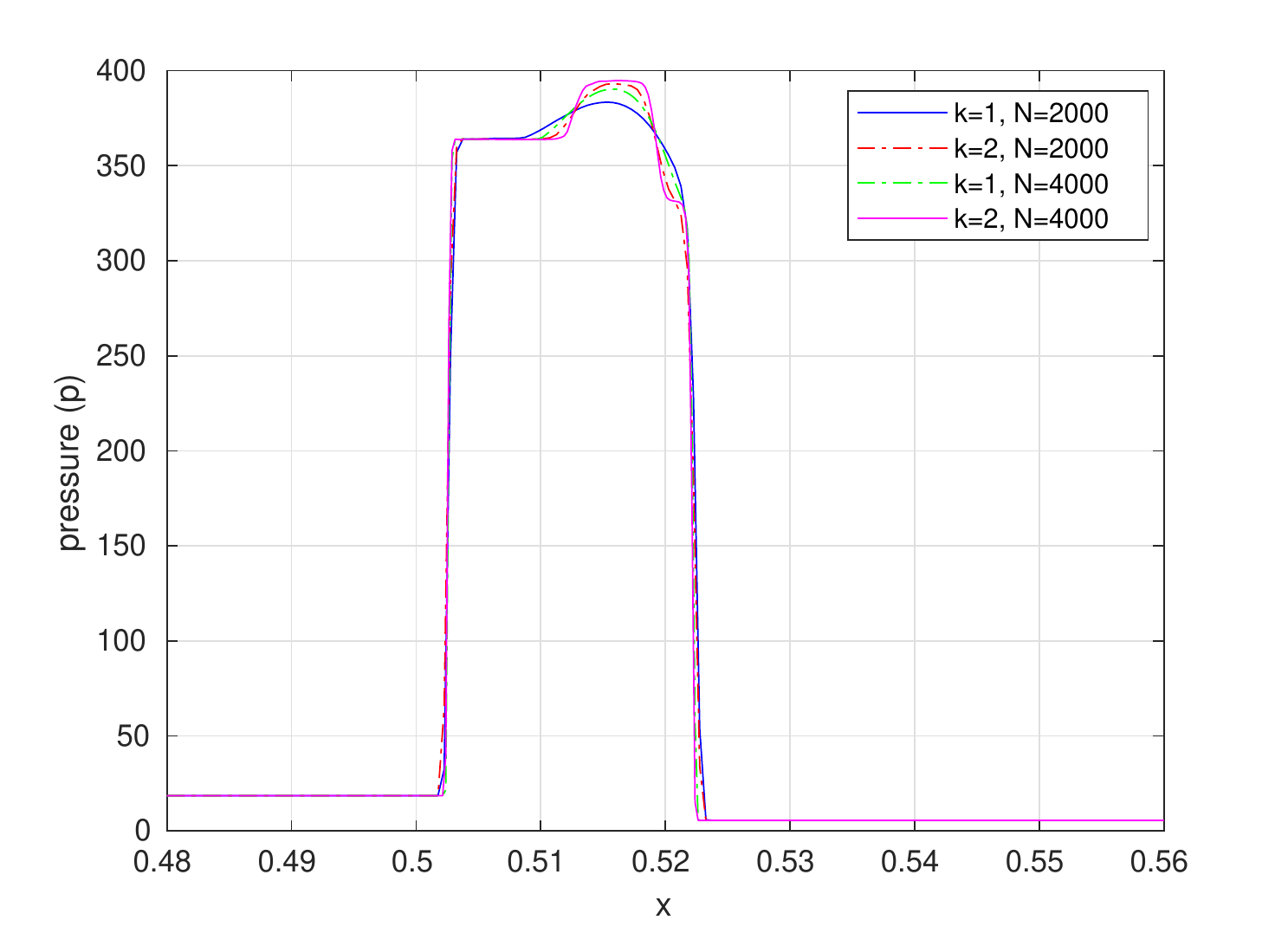}
			\caption{Pressure}
		\end{subfigure}
		\caption{Test Problem \ref{test8} (Blast waves test case): Plots of density, velocity and pressure zoomed in at $x=0.52$  for ESDG-O2(k=1) and ESDG-O3(k=2) using 2000 and 4000 cells.}
		\label{fig:pb18}
	\end{figure}
\end{example}
\section{Two dimensional numerical tests}
We now present two dimensional test cases. They are a set of two dimensional Riemann problems. 

\begin{example}[Two-dimensional Riemann problem 1:]
\label{test9}
We consider a two-dimensional Riemann problem from \cite{nunez2016xtroem}. The computational domain is $[0,1]\times[0,1]$ with outflow boundary conditions. The Riemann data is given as follows:
	\begin{equation*}
	\left(\rho,\,u_x,u_y,\,p\right)=\begin{cases}
	\left(0.5,\,0.5,\,-0.5,\,5\right) & \text{if $x>0.5$ and $y>0.5$}\\
	\left(1,\,0.5,\,0.5,\,5\right) & \text{if $x<0.5$ and $y>0.5$}\\
	\left(3,\,-0.5,\,0.5,\,5\right) & \text{if $x<0.5$ and $y<0.5$}\\
	\left(1.5,\,-0.5,\,-0.5,\,5\right) & \text{if $x>0.5$ and $y<0.5$}
	\end{cases}
	\end{equation*}
The solution to the problem has four vortex sheets that interact with a low density in the center.  Computational results are plotted in Figure \ref{fig:pb1} with $100\times 100$ cells at time $t=0.4$. We have plotted $\ln(\rho)$ and $\ln(p)$ using 25 contours. We observe that ESDG-O2 is much more diffusive than the ESDG-O3 scheme at this resolution. Furthermore, ESDG-O3 is producing very accurate results even at $100\times100$ mesh.
	
	\begin{figure}[h]
		\centering
		\begin{subfigure}[b]{0.45\textwidth}
			\includegraphics[width=\textwidth]{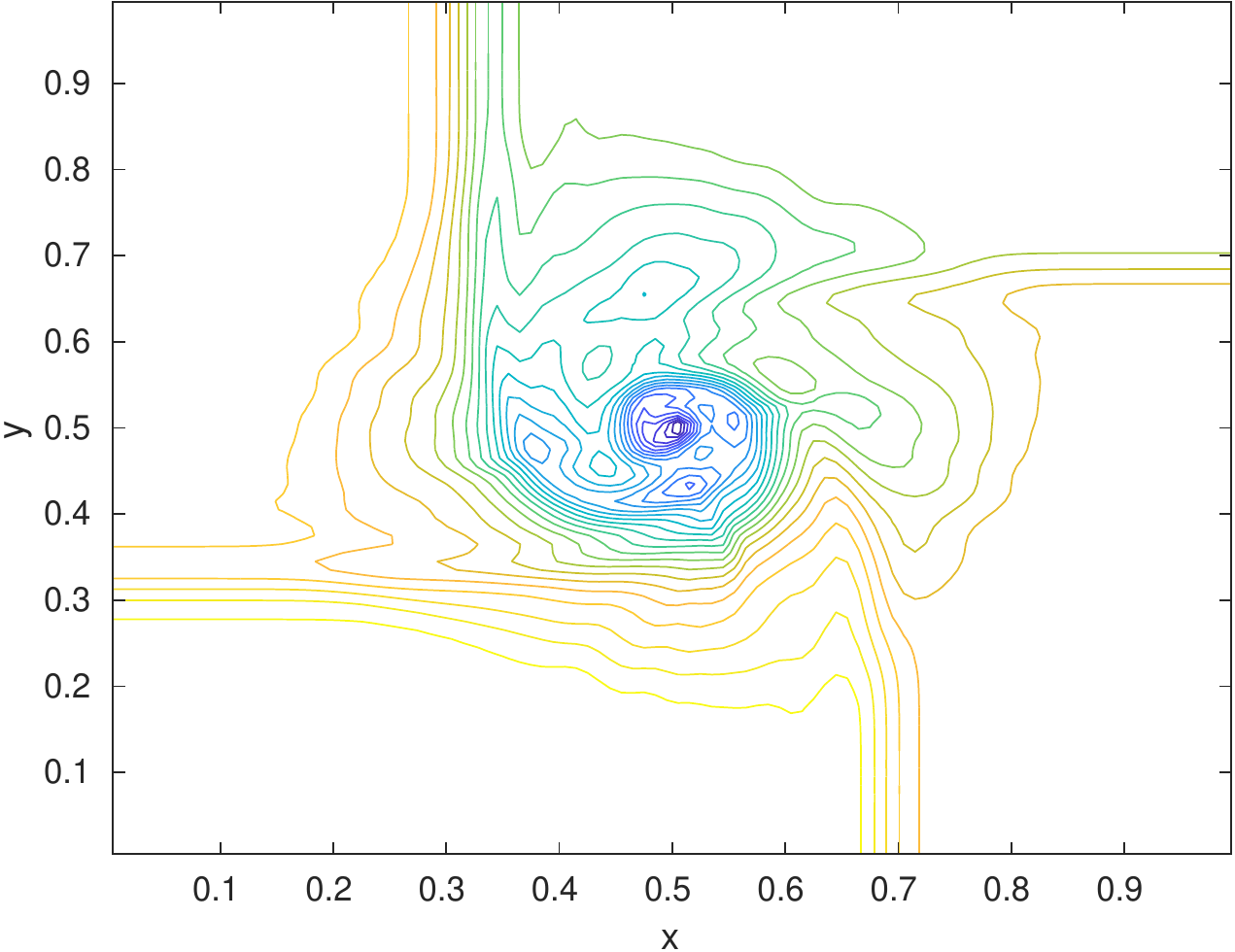}
			\caption{$\ln(\rho)$ using ESDG-O2}
		\end{subfigure}	
		\begin{subfigure}[b]{0.45\textwidth}
			\includegraphics[width=\textwidth]{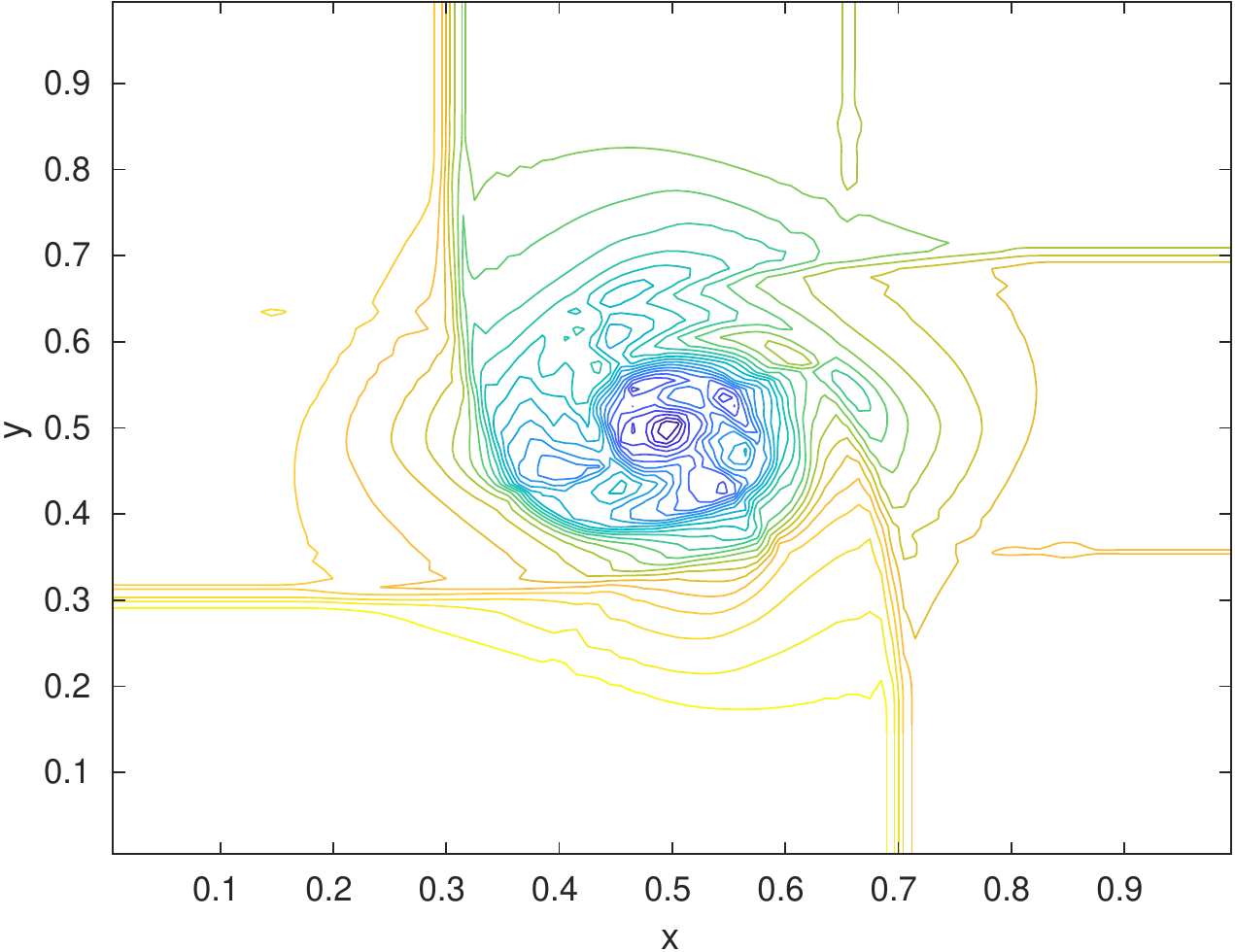}
			\caption{$\ln(\rho)$ using ESDG-O3}
		\end{subfigure}	
		\begin{subfigure}[b]{0.45\textwidth}
			\includegraphics[width=\textwidth]{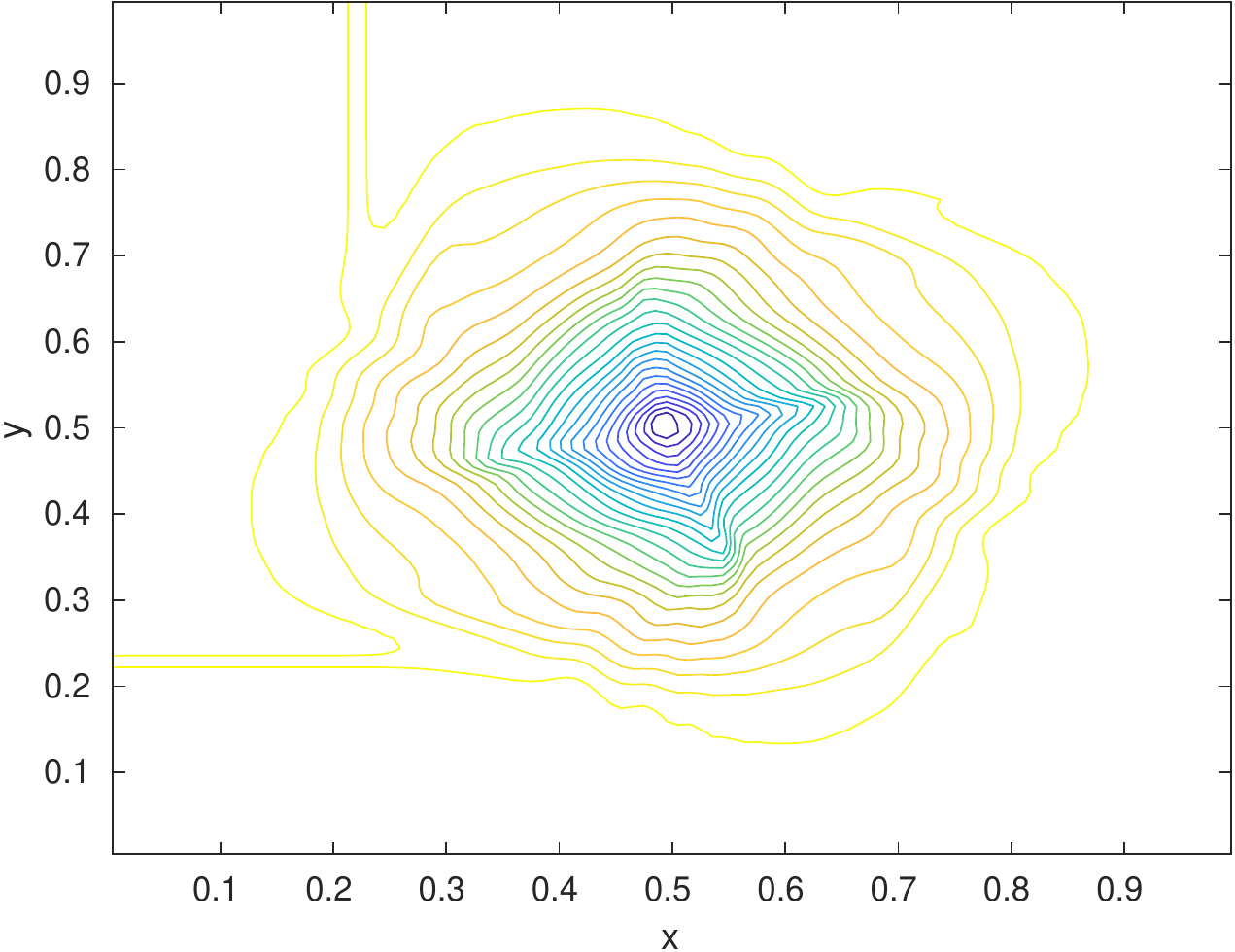}
			\caption{$\ln(p)$ using ESDG-O2}
		\end{subfigure}
		\begin{subfigure}[b]{0.45\textwidth}
			\includegraphics[width=\textwidth]{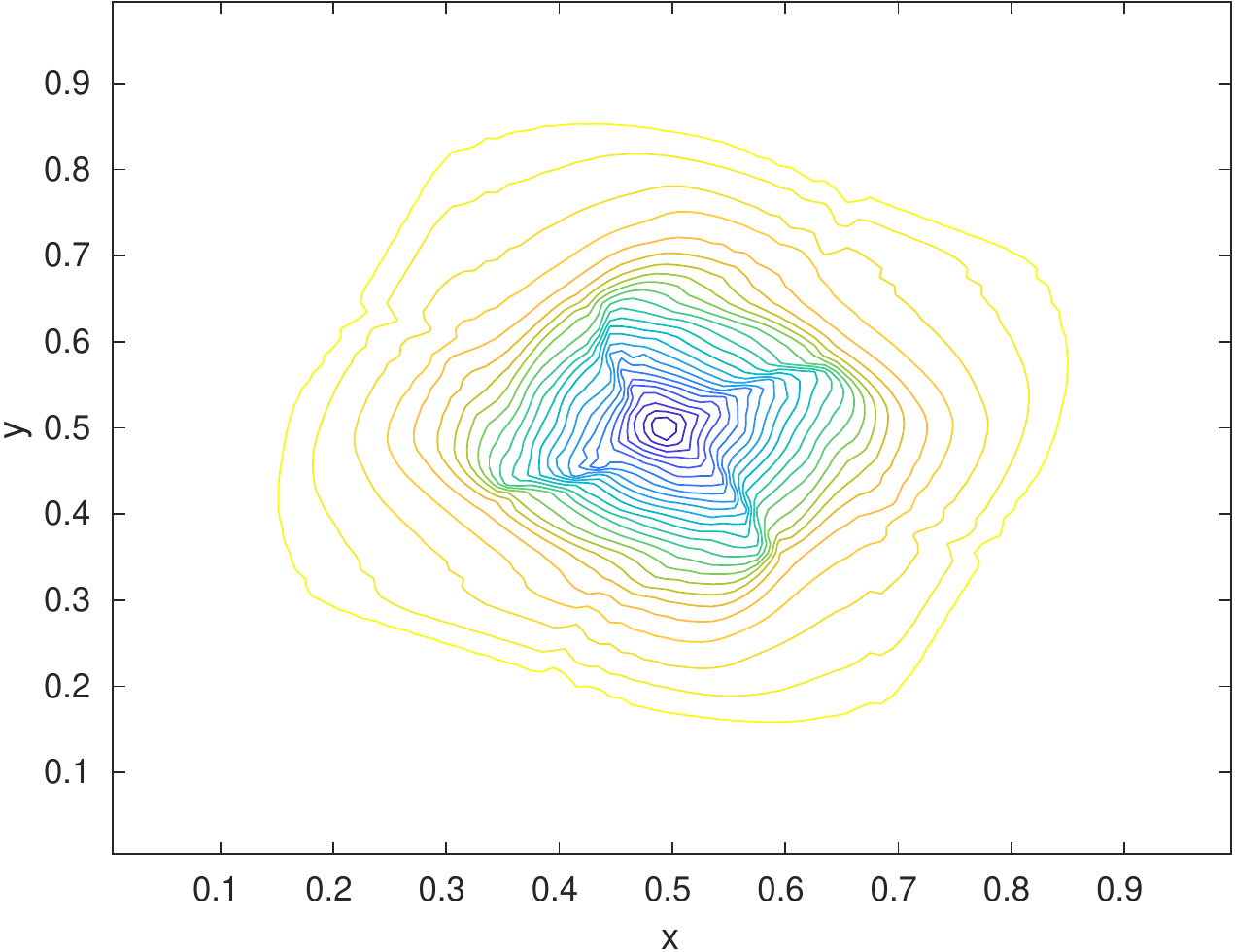}
			\caption{$\ln(p)$ using ESDG-O3}
		\end{subfigure}
		\caption{Test Problem \ref{test9} (Two-dimensional Riemann problem 1): Plots of $\ln(\rho)$ and $\ln(p)$ at time $t=0.4$ using $100\times100$ mesh.}
		\label{fig:pb1}
	\end{figure}
\end{example}
\begin{example}[Two-dimensional Riemann problem 2:]
	\label{test10}
	In this test case, we consider a Riemann problem  similar to the one in \cite{del2002efficient}. The computational domain $[0,1]\times[0,1]$ is filled with initial conditions given by,
	\begin{equation*}
	\left(\rho,\,u_x,u_y,\,p\right)=\begin{cases}
	\left(0.1,\,0,\,0,\,0.01\right) & \text{if $x>0.5$ and $y>0.5$}\\
	\left(0.1,\,0.9,\,0,\,1\right) & \text{if $x<0.5$ and $y>0.5$}\\
	\left(0.5,\,0,\,0,\,1\right) & \text{if $x<0.5$ and $y<0.5$}\\
	\left(0.1,\,0,\,0.9,\,1\right) & \text{if $x>0.5$ and $y<0.5$}
	\end{cases}
	\end{equation*}
	The computational results are presented in Figure \ref{fig:pb14} using $100\times100$ cells using outflow boundary conditions. We observe that both schemes are able to capture the solution features, with ESDG-O3 more accurate than the ESDG-O2 scheme.
	\begin{figure}[h]
		\centering
		\begin{subfigure}[b]{0.45\textwidth}
			\includegraphics[width=\textwidth]{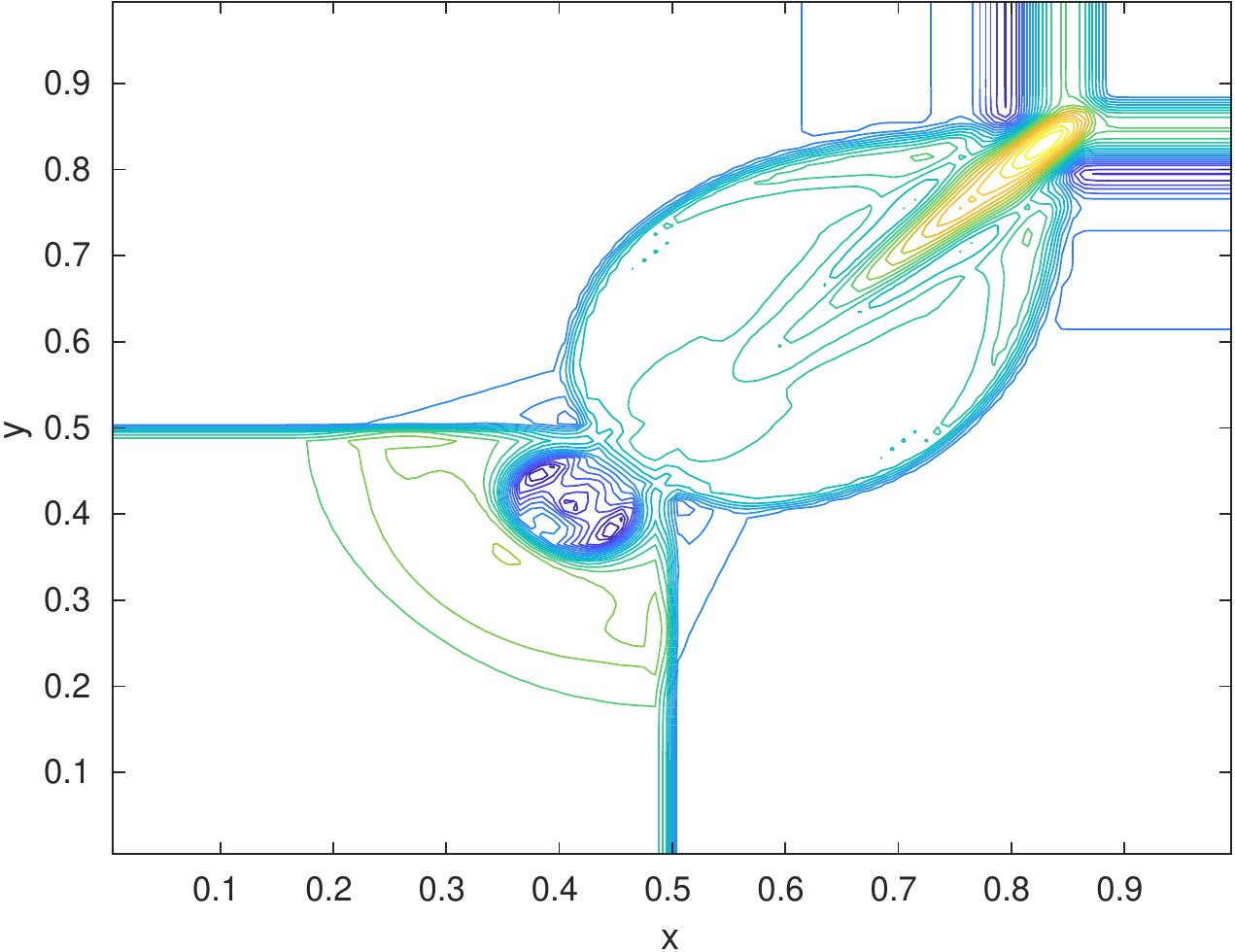}
			\caption{$\ln(\rho)$ using ESDG-O2}
		\end{subfigure}	
		\begin{subfigure}[b]{0.45\textwidth}
			\includegraphics[width=\textwidth]{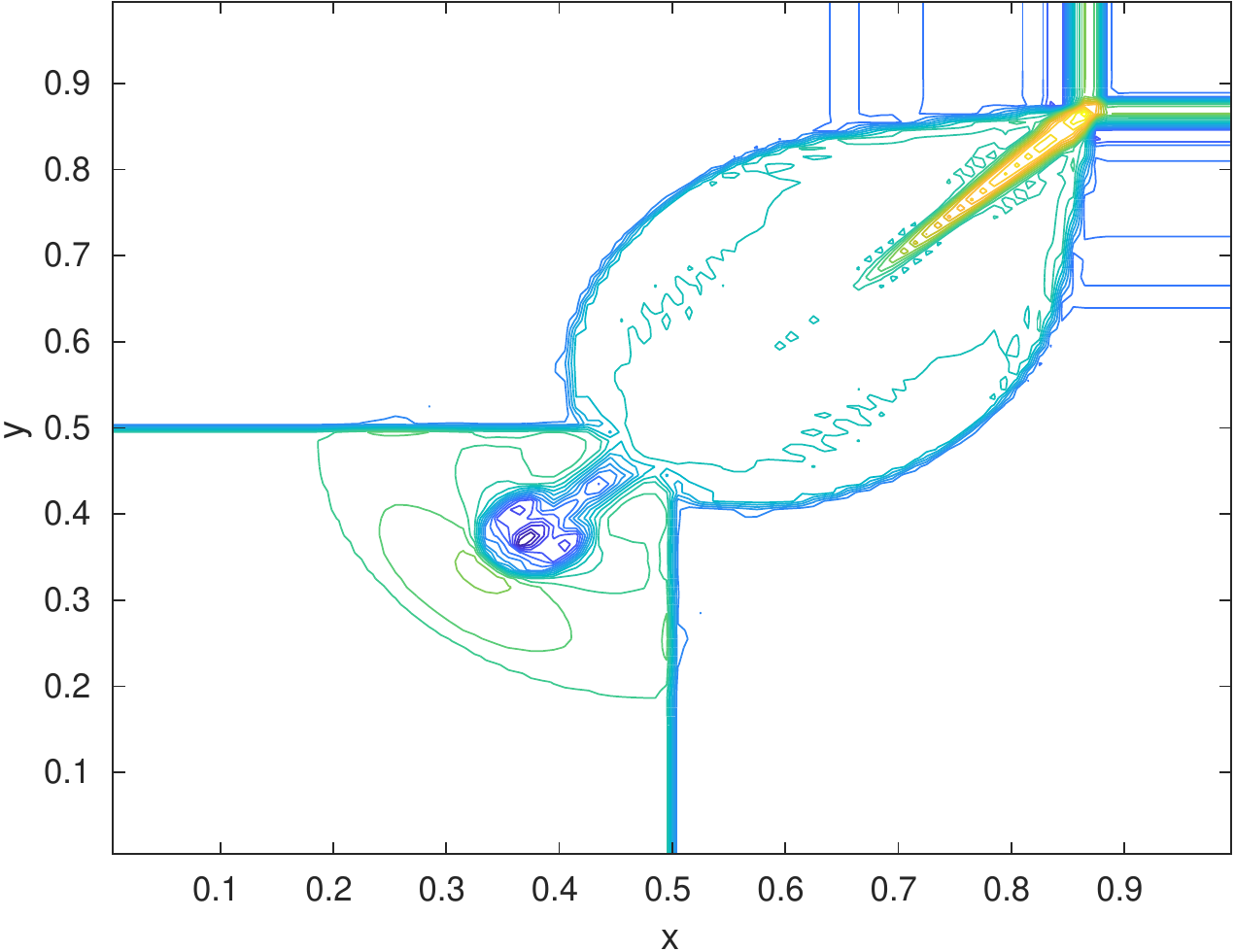}
			\caption{$\ln(\rho)$ using ESDG-O3}
		\end{subfigure}	
		\begin{subfigure}[b]{0.45\textwidth}
			\includegraphics[width=\textwidth]{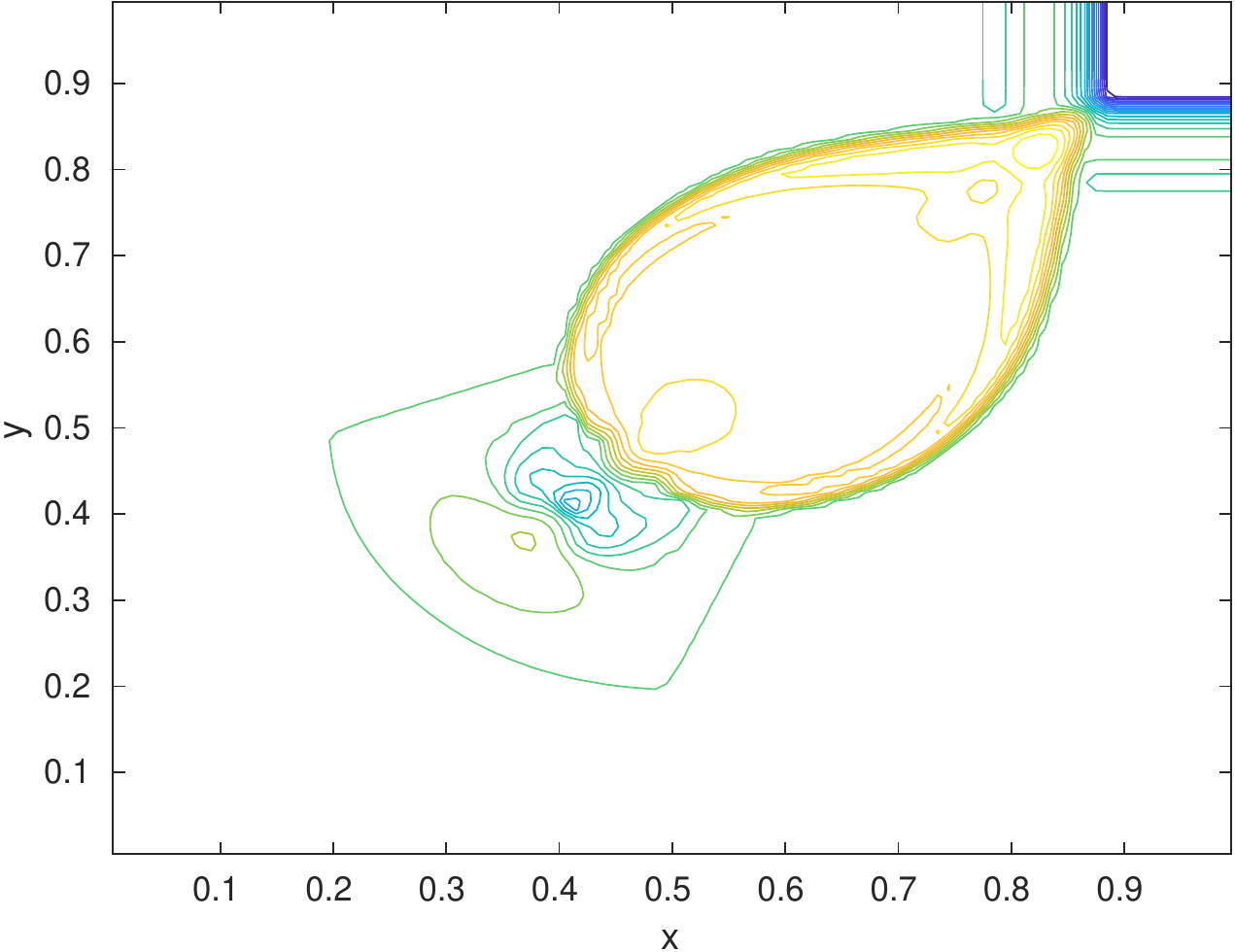}
			\caption{$\ln(p)$ using ESDG-O2}
		\end{subfigure}
		\begin{subfigure}[b]{0.45\textwidth}
			\includegraphics[width=\textwidth]{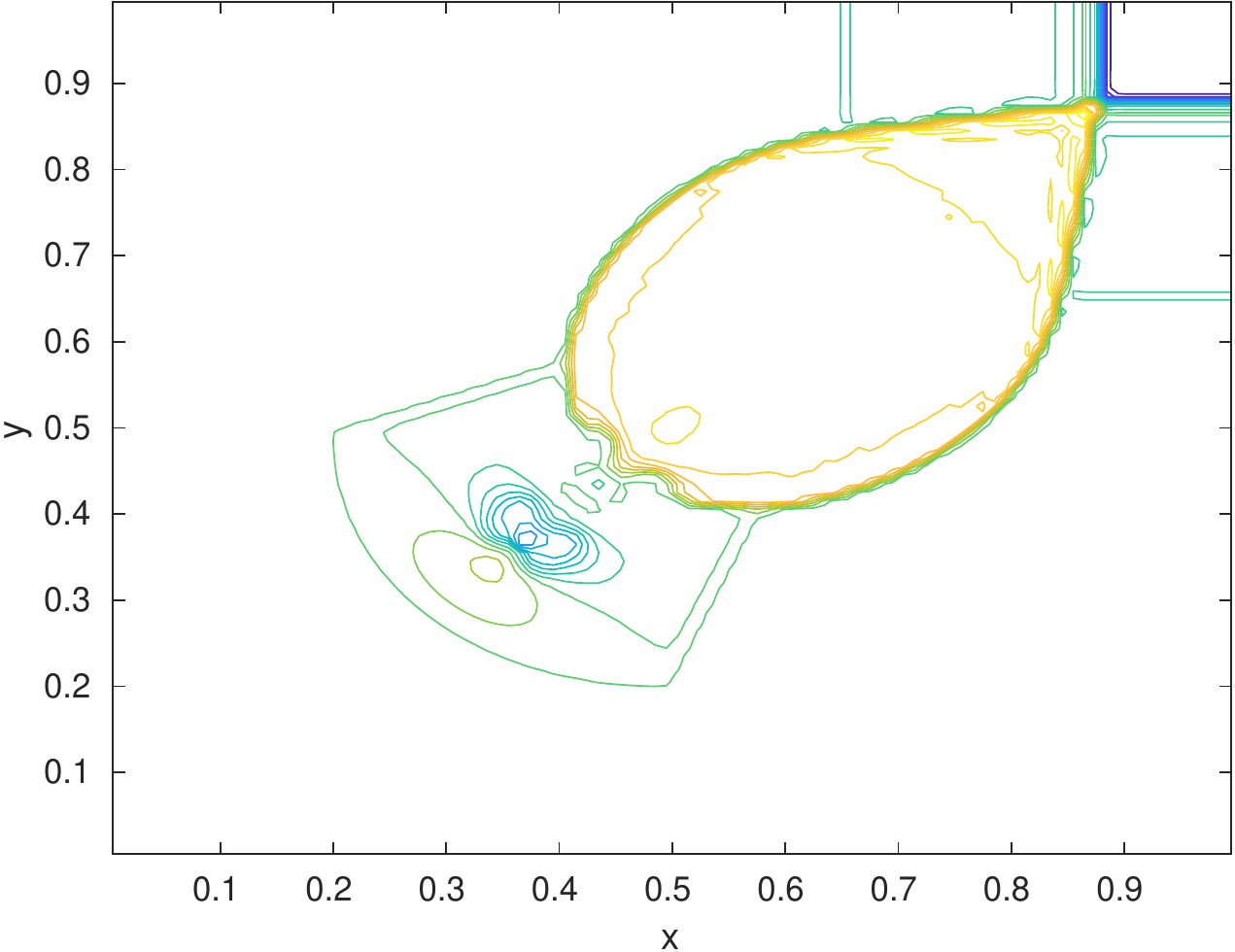}
			\caption{$\ln(p)$ using ESDG-O3}
		\end{subfigure}
		\caption{Test Problem \ref{test10} (Two-dimensional Riemann problem 2): Plots of $\ln(\rho)$ and $\ln(p)$ at time $t=0.4$ using $100\times100$ mesh.}
		\label{fig:pb14}
	\end{figure}
\end{example}
\begin{example}[Two-dimensional Riemann problem 3:]
	\label{test11}
In this test case we consider another Riemann problem from \cite{nunez2016xtroem}, where domain $[0,1]\times[0,1]$ is filled with with states given by,
	\begin{equation*}
	\left(\rho,\,u_x,u_y,\,p\right)=\begin{cases}
	\left(1,\,0,\,0,\,1\right) & \text{if $x>0.5$ and $y>0.5$}\\
	\left(0.5771,\,-0.3529,\,0,\,0.4\right) & \text{if $x<0.5$ and $y>0.5$}\\
	\left(1,\,-0.3529,\,-0.3529,\,1\right) & \text{if $x<0.5$ and $y<0.5$}\\
	\left(0.5771,\,0,\,-0.3529,\,0.4\right) & \text{if $x>0.5$ and $y<0.5$}
	\end{cases}
	\end{equation*}
    The solution involves the interaction of two rarefaction waves, which results in two symmetric shocks. The numerical results are presented in  Figure \ref{fig:pb15}  using $100\times 100$ cells for ESDG-O3 and ESDG-O2 schemes at time $t=0.4$. We have plotted $\ln(\rho)$ and $\ln(p)$ using 25 contours. We again observe the outperformance of ESDG-O3 compared to the ESDG-O2 scheme.     
	\begin{figure}[h]
		\centering
		\begin{subfigure}[b]{0.45\textwidth}
			\includegraphics[width=\textwidth]{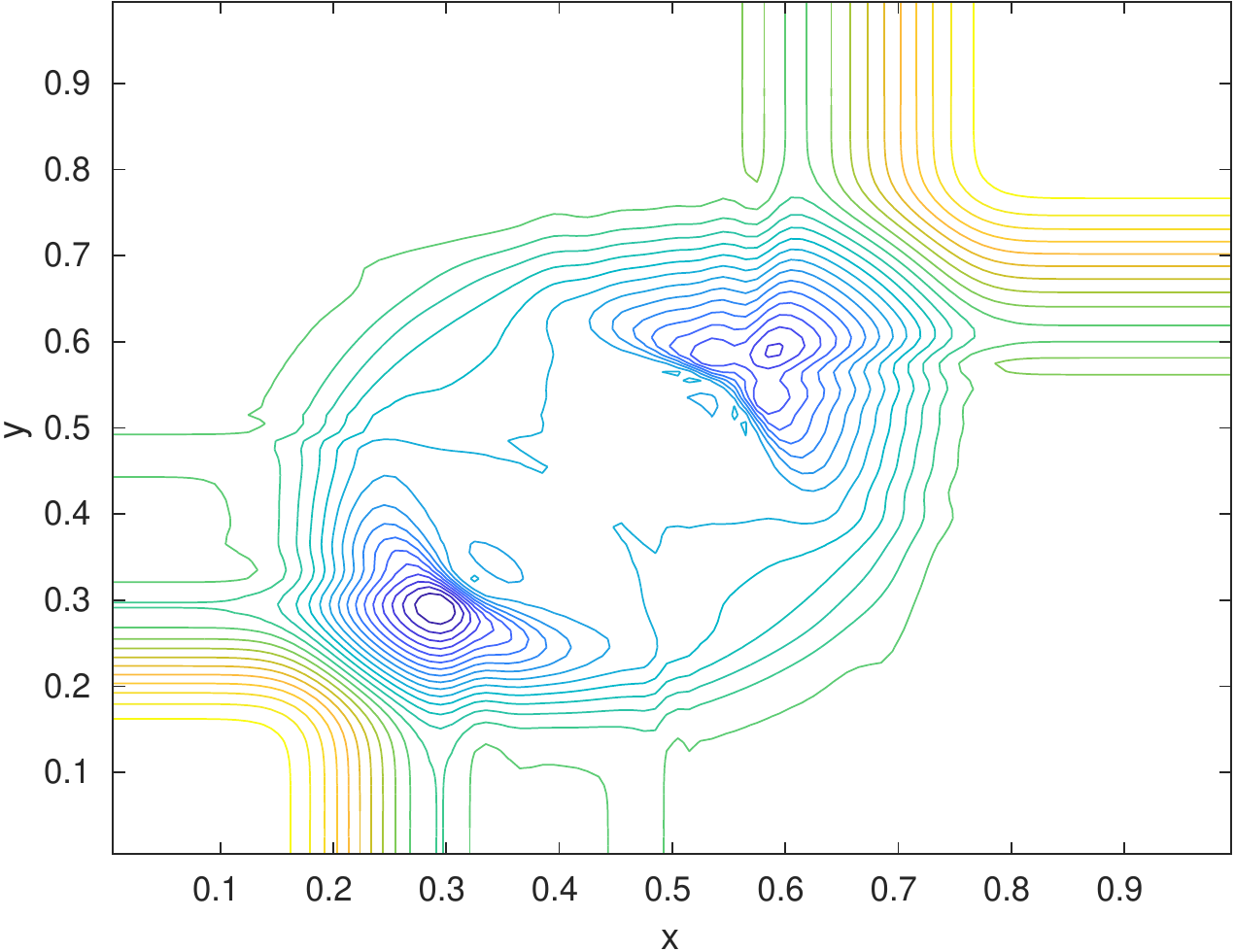}
			\caption{$\ln(\rho)$ using ESDG-O2}
		\end{subfigure}	
		\begin{subfigure}[b]{0.45\textwidth}
			\includegraphics[width=\textwidth]{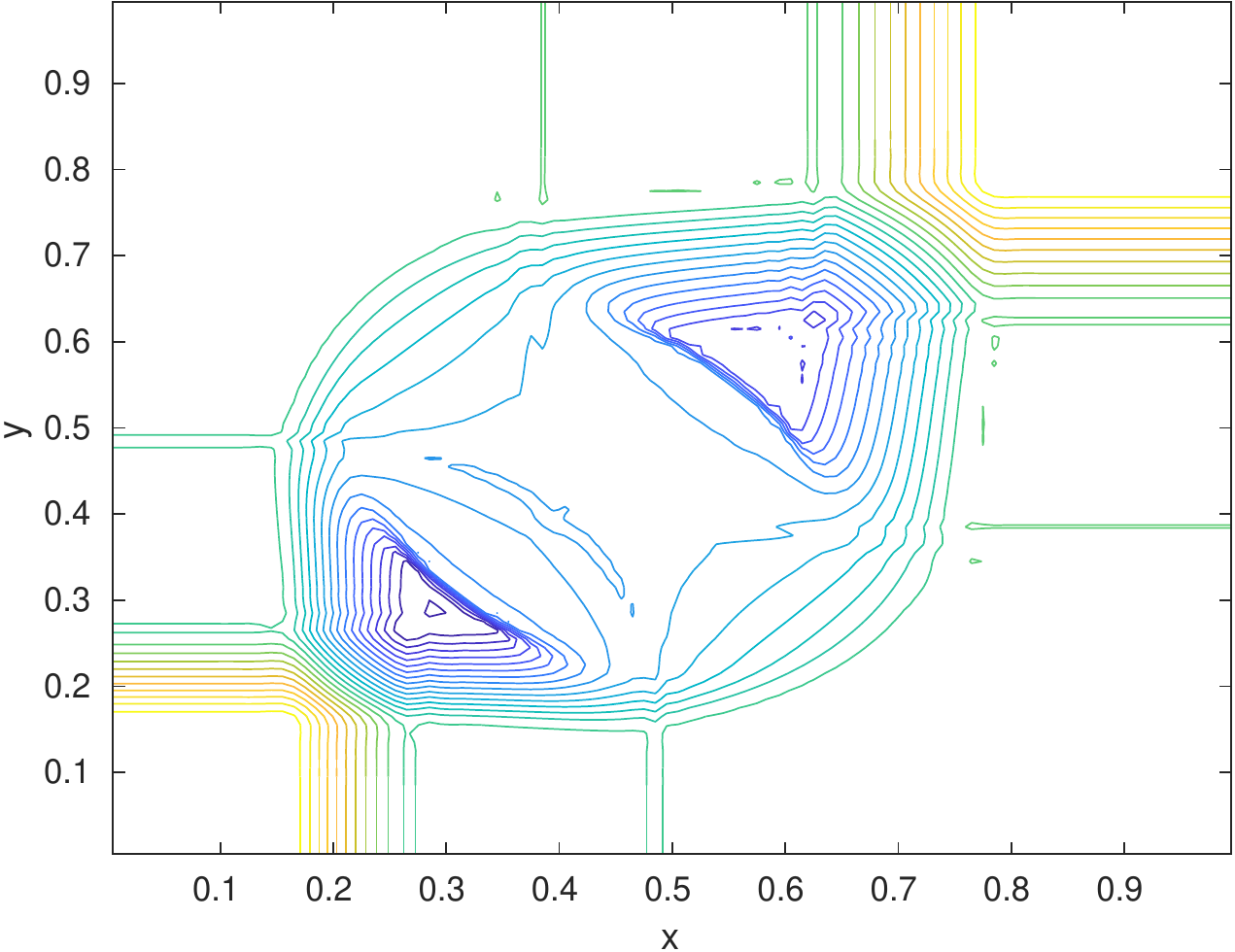}
			\caption{$\ln(\rho)$ using ESDG-O3}
		\end{subfigure}	
		\begin{subfigure}[b]{0.45\textwidth}
			\includegraphics[width=\textwidth]{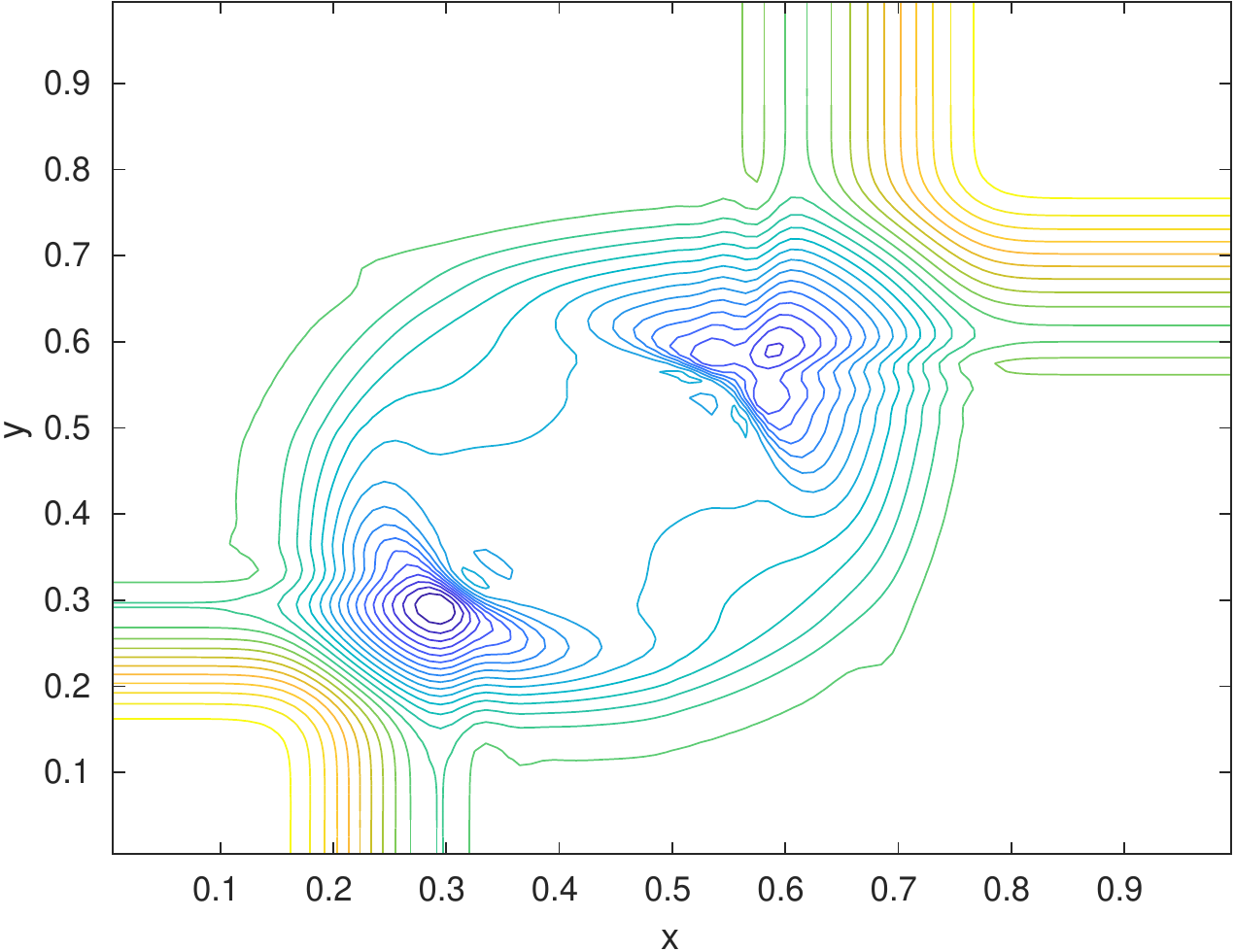}
			\caption{$\ln(p)$ using ESDG-O2}
		\end{subfigure}
		\begin{subfigure}[b]{0.45\textwidth}
			\includegraphics[width=\textwidth]{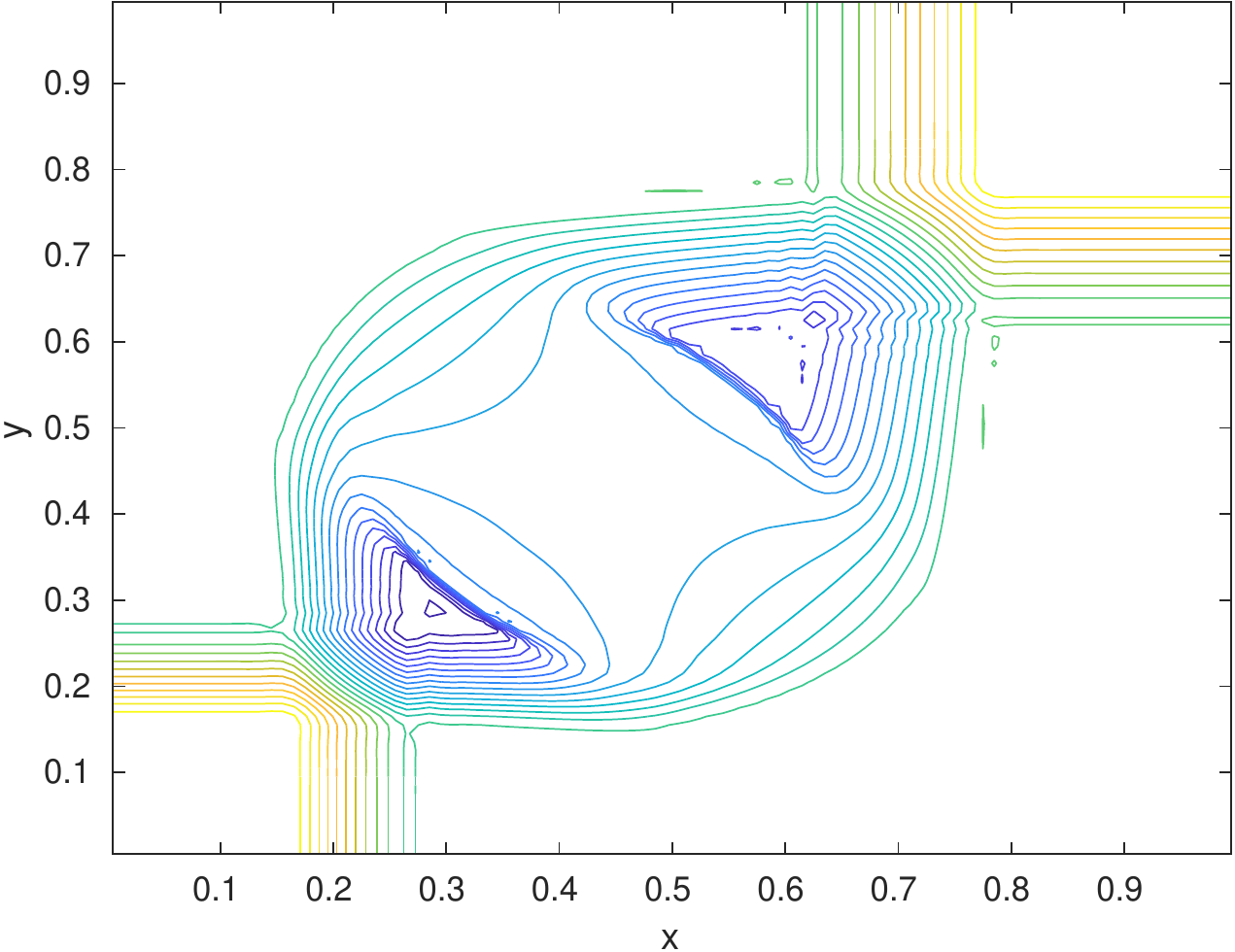}
			\caption{$\ln(p)$ using ESDG-O3}
		\end{subfigure}
		\caption{Test Problem \ref{test11} (Two-dimensional Riemann problem 3): Plots of $\ln(\rho)$ and $\ln(p)$ at time $t=0.4$ using $100\times100$ mesh.}
		\label{fig:pb15}
	\end{figure}
\end{example}

\begin{example}[Two-dimensional Riemann problem 4:]
	\label{test12}	
In this test case from \cite{nunez2016xtroem} , we again consider computational domain $[0,1]\times[0,1]$ with outflow boundary conditions. The  initial conditions are given by,
	\begin{equation*}
	\left(\rho,\,u_x,u_y,\,p\right)=\begin{cases}
	(0.035145216124503,\,0,&\\
	\,0,\,0.162931056509027) & \text{if $x>0.5$ and $y>0.5$}\\
	\left(0.1,\,0.7,\,0,\,1\right) & \text{if $x<0.5$ and $y>0.5$}\\
	\left(0.5,\,0,\,0,\,1\right) & \text{if $x<0.5$ and $y<0.5$}\\
	\left(0.1,\,0,\,0.7,\,1\right) & \text{if $x>0.5$ and $y<0.5$}
	\end{cases}
	\end{equation*}
    We again use the $100\times 100$ mesh for both schemes. we have plotted $\ln(\rho)$ and $\ln(p)$ using 25 contours. We observe that schemes can capture curved shocks very well. Furthermore, the ESDG-O3 scheme is more accurate than the ESDG-O2 scheme
		\begin{figure}[h]
		\centering
		\begin{subfigure}[b]{0.45\textwidth}
			\includegraphics[width=\textwidth]{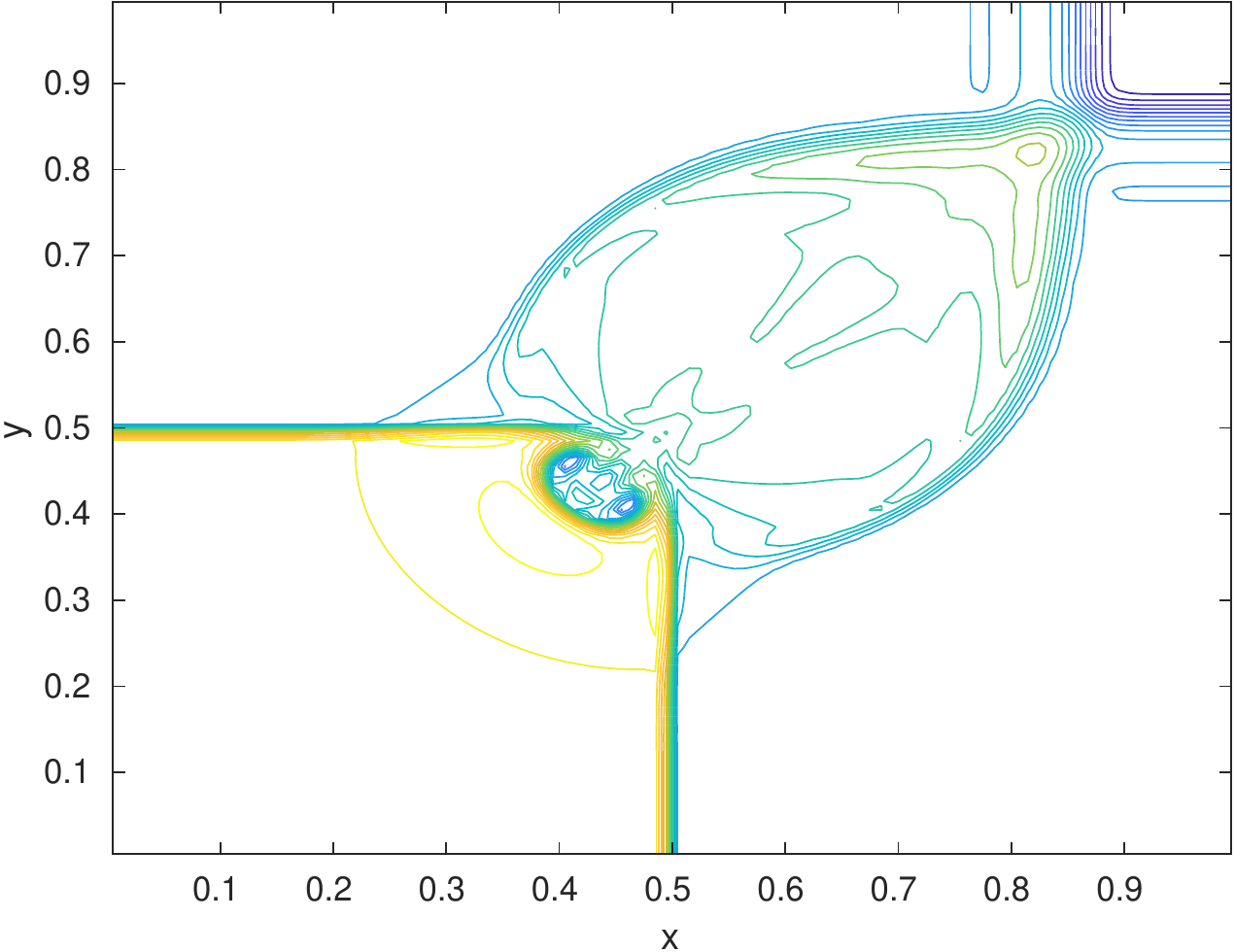}
			\caption{$\ln(\rho)$ using ESDG-O2}
		\end{subfigure}	
		\begin{subfigure}[b]{0.45\textwidth}
			\includegraphics[width=\textwidth]{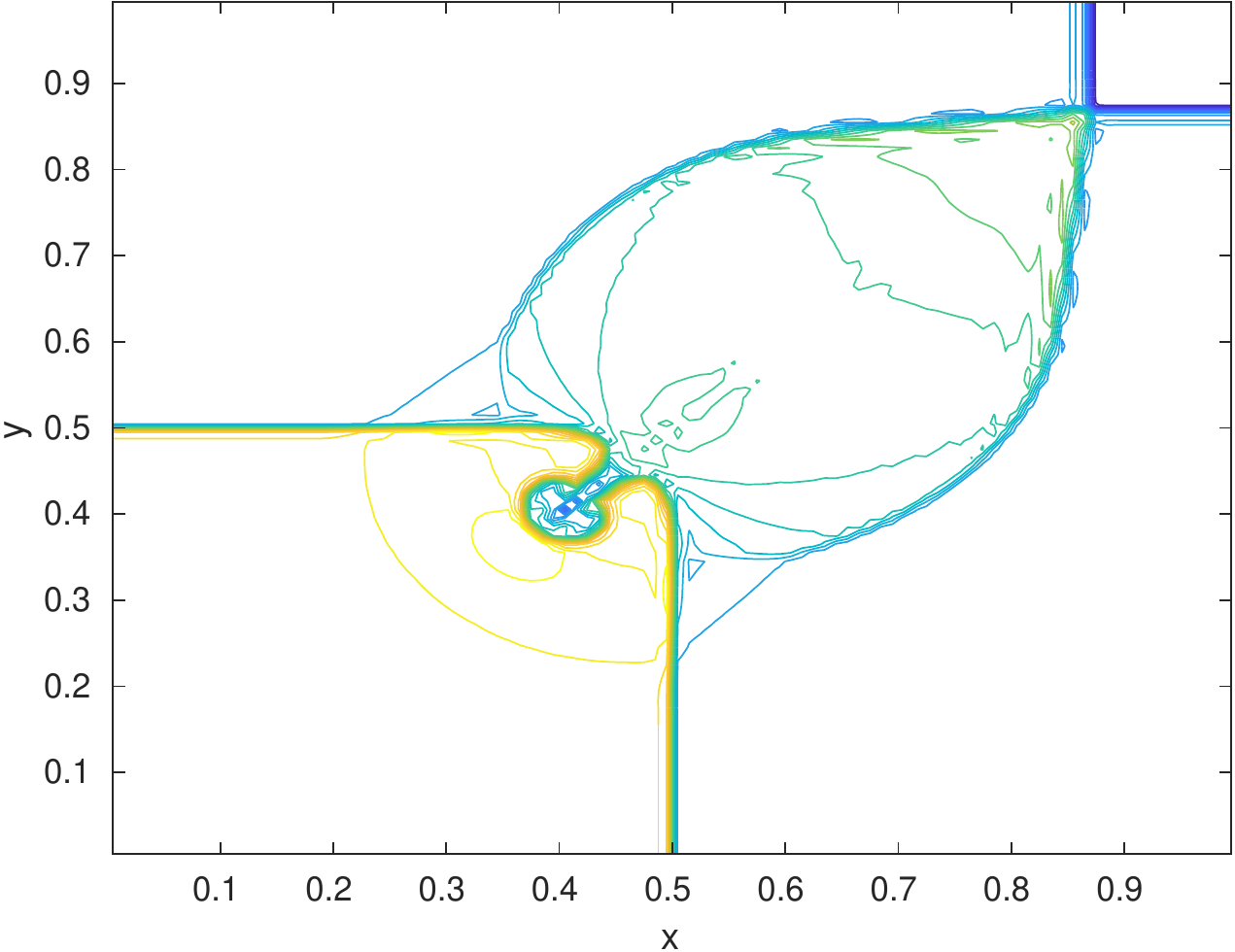}
			\caption{$\ln(\rho)$ using ESDG-O3}
		\end{subfigure}	
		\begin{subfigure}[b]{0.45\textwidth}
			\includegraphics[width=\textwidth]{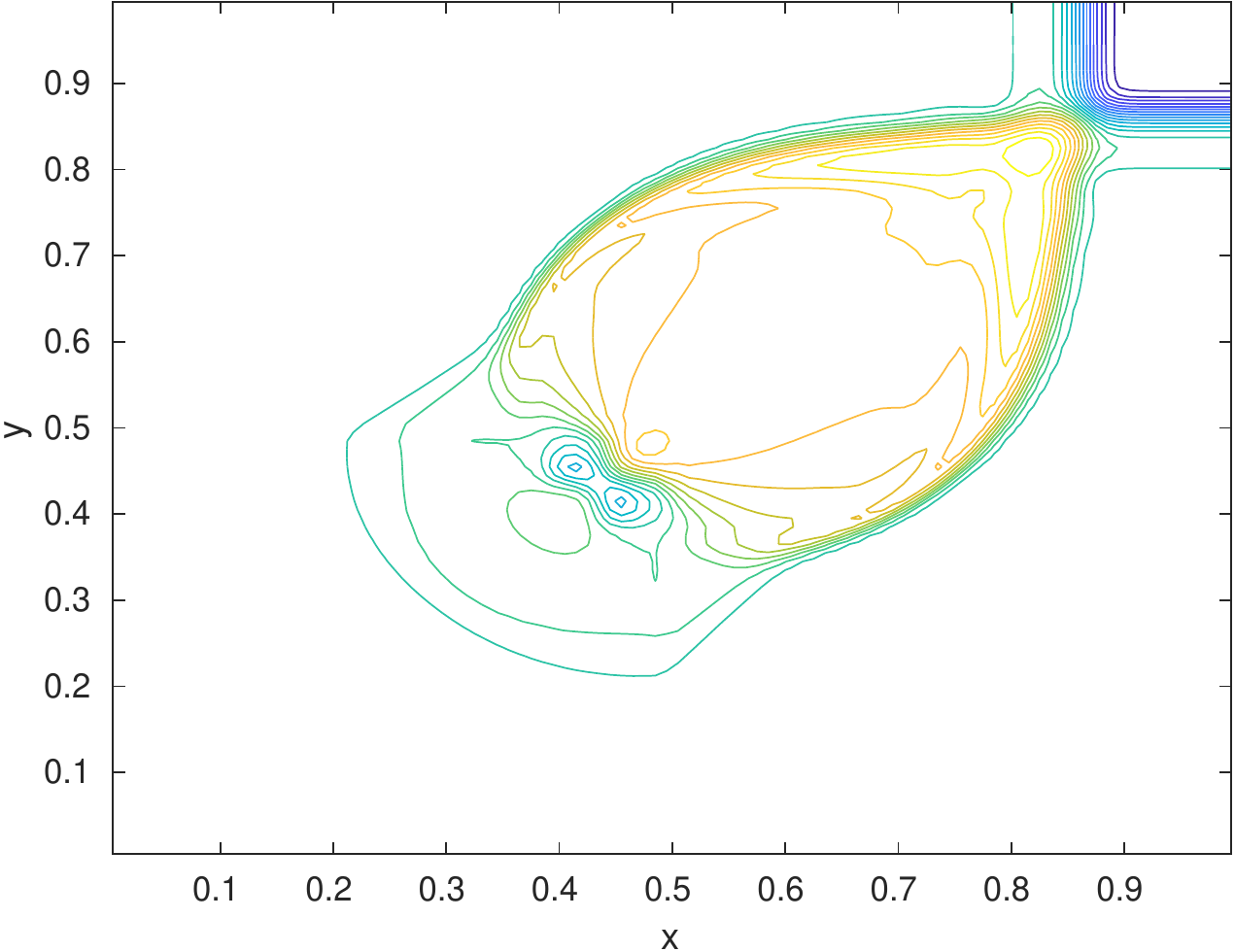}
			\caption{$\ln(p)$ using ESDG-O2}
		\end{subfigure}
		\begin{subfigure}[b]{0.45\textwidth}
			\includegraphics[width=\textwidth]{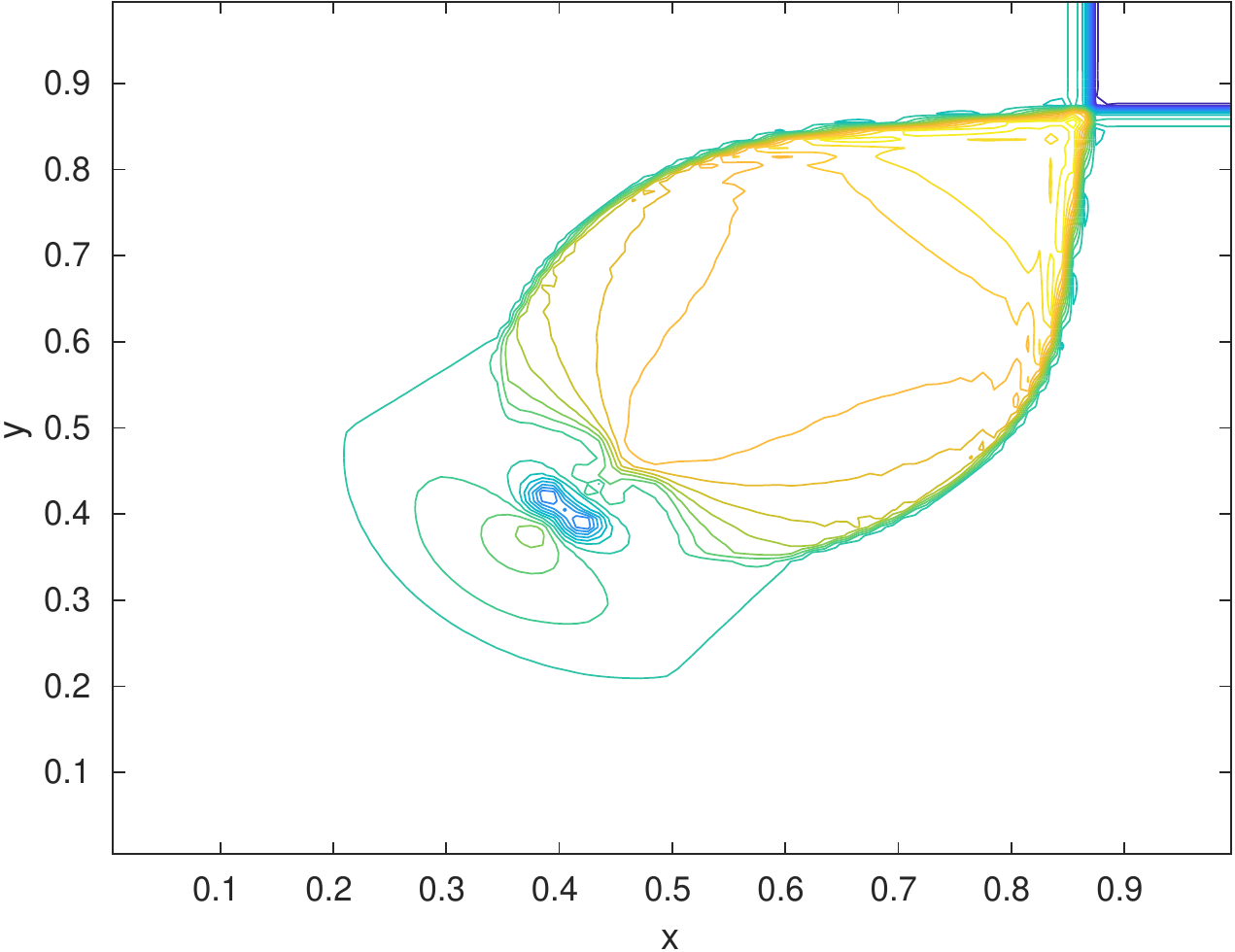}
			\caption{$\ln(p)$ using ESDG-O3}
		\end{subfigure}
		\caption{Test Problem \ref{test12} (Two-dimensional Riemann problem 4): Plots of $\ln(\rho)$ and $\ln(p)$ at time $t=0.4$ using $100\times100$ mesh.}
		\label{fig:pb16}
	\end{figure}
\end{example}

\section{Conclusion}
\label{sec:con}
In this article, we have considered the equation of special relativistic hydrodynamic with the ideal equation of state. We also present the entropy and entropy flux for the system.  Then we design the higher-order entropy stable discontinuous Galerkin scheme for the system in both one- and two-dimensions. This is achieved using an entropy conservative numerical flux from \cite{deepak2019entropy} in cells and an entropy stable numerical flux at the cell interfaces. Furthermore, following \cite{chen2017entropy}, we use  Gauss-Lobatto quadrature rules which have SBP property. The resulting schemes are shown to be entropy stable at the semi-discrete level. For the time discretization, we have used SSP Runge Kutta methods. These schemes are then tested on several test cases in one- and two-dimensions. 
\section*{Acknowledgment} Harish Kumar has been funded in part by SERB, DST MATRICS  grant with file No. MTR/2019/000380. 
\bibliography{main}
\pagebreak

\end{document}